\newtheorem{thm}{Theorem}[section]
\newtheorem{lem}[thm]{Lemma}
\newtheorem{cor}[thm]{Corollary}
\newtheorem{defn-lem}[thm]{Definition-Lemma}
\newtheorem{conj}[thm]{Conjecture}
\newtheorem{prop}[thm]{Proposition}
\theoremstyle{remark}
\theoremstyle{definition}
\numberwithin{equation}{section}
\def \CA{{\mathcal A}}
\def \C{{\mathbb C}}
\def\map#1.#2.{#1 \longrightarrow #2}
\def\rmap#1.#2.{#1 \dasharrow #2}
\DeclareMathOperator{\corank}{corank}
\def\fb#1.{\underset #1 \to \times}
\def\pr#1.{\Bbb P^{#1}}
\def\ring#1.{\mathcal O_{#1}}
\def\mlist#1.#2.{{#1}_1,{#1}_2,\dots,{#1}_{#2}}
\def\mod{\operatorname{mod}}
\def\uloopr#1{\ar@'{@+{[0,0]+(-4,5)} @+{[0,0]+(0,10)}
@+{[0,0]+(4,5)}}^{#1}}
\def\dloopr#1{\ar@'{@+{[0,0]+(-4,-5)} @+{[0,0]+(0,-10)}
@+{[0,0]+(4,-5)}}_{#1}}
\def\rloopd#1{\ar@'{@+{[0,0]+(5,4)} @+{[0,0]+(10,0)}
@+{[0,0]+(5,-4)}}^{#1}}
\newcommand{\rdots}{\mathinner{\mkern1mu\raise1pt\hbox{.}\mkern2mu\raise4pt\hbox{.} \mkern2mu\raise7pt\vbox{\kern7pt\hbox{.}}\mkern1mu}} 
\def\lloopd#1{\ar@'{@+{[0,0]+(-5,4)} @+{[0,0]+(-10,0)}
@+{[0,0]+(-5,-4)}}_{#1}}
\long\def\ignore#1{}
\long\def\ignore#1{#1}
\begin{document}
\begin{center}
{\bf On the Thom-Boardman Symbols for Polynomial Multiplication Maps}

\bigskip

\end{center}
\begin{center}  
{{\small Jiayuan Lin, Janice Wethington}}
\end{center}

{\small {\bf Abstract} The Thom-Boardman symbol was first introduced by Thom in $1956$ to classify singularities of differentiable maps. It was later generalized by Boardman to a more general setting. Although the Thom-Boardman symbol is realized by a sequence of non-increasing, nonnegative integers, to compute those numbers is, in general, extremely difficult. In the case of polynomial multiplication maps, Robert Varley conjectured that computing the Thom-Boardman symbol for polynomial multiplication reduces to computing the successive quotients and remainders for the Euclidean algorithm applied to the degrees of the two polynomials. In this paper, we confirm this conjecture.}

\section{Introduction}

This paper proves Robert Varley's conjecture on the Thom-Boardman symbols for polynomial multiplication maps.

In 1956, R. Thom developed a method to classify singularities of differentiable maps according to the rank of the first differential of the map and the ranks of its restrictions to submanifolds of singularities. His theory depended upon the manifold structure of the singular locus of each restriction of the map. Eleven years later, J.M. Boardman $[3]$ generalized Thom's work to include maps whose singular loci may fail to be manifolds, or whose successive restrictions may fail to be manifolds. In effect, Boardman expanded Thom's work to almost all differential maps on manifolds. The Thom-Boardman classification is realized by an infinite, non-increasing sequence of nonnegative integers referred to as the Thom-Boardman symbol. When the number of nonzero terms is finite, the sequence for the symbol is usually truncated after the last nonzero entry.

Joint work concerning invariants of Gauss maps of theta divisors by M. Adams, C. McCrory, T. Shifrin and R. Varley $[1]$ revealed a fundamental connection between these Gauss maps and secant maps. Continued work by R. Varley indicated a connection between secant maps and maps defined by the multiplication of two monic single-variable polynomials. The multiplication maps take the coefficients of two polynomials to expressions in those coefficients that describe the coefficients of the product of the two polynomials. The classification by singularities of these polynomial multiplication maps would result in the classification of the secant maps. However, The Thom-Boardman symbol is usually difficult to compute. Even in the case of the polynomial multiplication maps the computation become extremely difficult in all but a small number of cases. A conversation with Victor Goryunov led Robert Varley to conjecture that computing the Thom-Boardman symbol for polynomial multiplication reduces to computing the successive quotients and remainders for the Euclidean algorithm applied to the degrees of the two polynomials.

In her Ph.D. dissertation $[4]$, Janice Wethington revealed many fundamental structures in the Jacobian matrices. She proved Varley's Conjecture in several special cases and obtained upper bounds for the Thom-Boardman symbols. In this paper, we completely prove Varley's Conjecture.

For the reader's convenience, let us first recall the definition of Thom-Boardman symbol from $[2]$ and then state Robert Varley's conjecture.

Let $J$ be an ideal in the algebra $\CA$ of germs at a given point of $C^{\infty}$ maps of manifolds $F: M \rightarrow N, F=(f_1,f_2, \cdots, f_n)$, where $M$ and $N$ have dimensions $m$ and $n$ respectively. Take $x_1, \cdots, x_m$ to be local coordinates on $M$. The $\textit{Jacobian extension}$, $\Delta_k J$, is the ideal spanned by $J$ and all the minors of order $k$ of the $\textit{Jacobian matrix}$ $(\partial f_i/ \partial x_j)$, denoted $\delta J$, formed from partial derivatives of functions $f$ in $J$. Since the determinant of this matrix is multilinear and since $(\partial f/ \partial x')=\partial f/ \partial x \cdot \partial x/ \partial x'$, the $\textit{Jacobian extension}$ is independent of the coordinate system chosen, hence is an invariant of the ideal. We say that $\Delta_i J$ is $\textit{critical}$ if $\Delta_i J \ne \CA$ but $\Delta_{i-1} J = \CA$. That is, the $\textit{critical extension}$ of $J$ is $J$ adjoined with the least order minors of the Jacobian matrix of $J$ for which the extension does not coincide with the whole algebra. If every size minor of $\delta J$ is a unit in $\CA$, then the map was of full rank at the given point already and the critical extension is the ideal $J$ itself. Note that $J \subseteq \Delta_i J$.

Now we shift the lower indices to upper indices of the critical extensions by the rule $\Delta^i J=\Delta_{m-i+1} J$. We repeat the process described above with the resulting ideals until we have a sequence of critical extensions of $J$,

$$J \subseteq \Delta^{i_1} J \subseteq \Delta^{i_2} \Delta^{i_1} J \subseteq \cdots \subseteq \Delta^{i_k} \Delta^{i_{k-1}} \cdots \Delta^{i_1} J= \bf{m}$$

\noindent where $\bf{m}$ is the maximal ideal of $\CA$. Then the $\text{\bf{Thom-Boardman symbol}}$, $TB(J)$, is given by $(i_1, i_2, \cdots, i_k)$. The purpose of switching the indices is that doing so allows us to express $TB(J)$ as follows:

$$i_1=\corank (J), i_2= \corank (\Delta^{i_1} J), \cdots, i_k= \corank (\Delta^{i_{k-1}} \cdots \Delta^{i_1} J)$$

\noindent where the rank of ideal is defined to be the maximal number of independent coordinates from the ideal and the corank is the number of variables minus the rank.

Let $M_n$ be the set of monic complex polynomials in one variable of degree $n$. $M_n \cong \C^n$ by the map sending 
$f(x)=x^n+a_{n-1} x^{n-1}+ \cdots + a_0$ to the $n$-tuple $(a_0,a_1,\cdots, a_{n-1}) \in \C^n$.

If we take $f(x)$ of degree $n$ as above and $g(x)=x^r+ b_{r-1} x^{r-1} + \cdots + b_0$ of degree $r$, then the product $h(x)=f(x) g(x)$ is a monic polynomial of the form $h(x)=x^{n+r} + c_{n+r-1} x^{n+r-1} + \cdots + c_0$, where the $c_j$'s are polynomials in the coefficients of $f$ and $g$. We can also assume that $n \ge r$. The $c_j$'s are as shown below:

$$\left\{\begin{array}{l} 
       c_{n+r-1}=a_{n-1} + b_{r-1} \\ 
       c_{n+r-2}=a_{n-2} + b_{r-2}+ a_{n-1} b_{r-1} \\ 
       and\\
       c_{n+r-j}=a_{n-j} +b_{r-j} + \underset{i+k=n+r-j}{\sum} a_i b_k, \hskip .1 cm \text{for} \hskip .1 cm j \le r \\
       c_{n+r-j}=a_{n-j} + \underset{i+k=n+r-j}{\sum} a_i b_k, \hskip .1 cm \text{for} \hskip .1 cm r< j \le n \\ 
c_{n+r-j}= \underset{i+k=n+r-j}{\sum} a_i b_k, \hskip .1 cm \text{for} \hskip .1 cm j >n\end{array} \right. $$

This gives us maps

$$\mu_{n,r}: \C^n \times \C^r \rightarrow \C^{n+r}$$

\noindent defined by 

$$(a_0, \cdots, a_{n-1}, b_0, \cdots, b_{r-1}) \rightarrow (c_{n+r-1}, \cdots, c_0).$$

\noindent Consider the Euclidean algorithm applied to $n$ and $r$:

$$\begin{array}{lr} n=q_1 r+r_1,  \hskip .44 cm 0 < r_1 < r\\
r=q_2 r_1+r_2,  \hskip .35 cm 0 < r_2 < r_1 \\
\vdots\\
r_{k-1}=q_{k+1} r_k, \hskip .3 cm  0 < r_k <r_{k-1}\end{array} $$

\noindent Let $I(n,r)$ be the tuple given by the Euclidean algorithm on $n$ and $r$:

$$I(n,r)=(r, \cdots, r, r_1, \cdots, r_1, \cdots, r_k, \cdots, r_k)$$

\noindent where $r$ is repeated $q_1$ times, and $r_i$ is repeated $q_{i+1}$ times.

\medskip

Let $I(\mu_{n,r})$ be the ideal in the algebra $\CA$ of germs at origin generated by $c_{j}$'s in the map $\mu_{n,r}: \C^n \times \C^r \rightarrow \C^{n+r}$. Denote $TB(I(\mu_{n,r}))$ the Thom-Boardman symbol of this ideal, Robert Varley conjectured that

\begin{conj} (Varley's Conjecture) $TB(I(\mu_{n,r}))=I(n,r)$ for any $n \ge r$.
\end{conj}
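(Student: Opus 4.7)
The plan is to induct on the number $k$ of non-trivial steps in the Euclidean algorithm applied to $(n,r)$. The base case is $r \mid n$, in which case the remainder $r_1$ is absent and $I(n,r) = (r, r, \ldots, r)$ has length $q_1$; this case must be checked directly. The inductive step reduces the Thom-Boardman computation for $\mu_{n,r}$ to that for $\mu_{r, r_1}$: after $q_1$ critical extensions of type $\Delta^r$, the resulting ideal should agree in its downstream Thom-Boardman behaviour with $I(\mu_{r, r_1})$, so that prepending $q_1$ copies of $r$ to $I(r, r_1)$ yields $I(n, r)$.

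The main organizing tool is the Jacobian matrix. Adopting the convention $a_n = b_r = 1$ and $a_i = b_k = 0$ for out-of-range indices, one has $c_\ell = \sum_{i+k=\ell} a_i b_k$, so $\partial c_\ell / \partial a_i = b_{\ell-i}$ and $\partial c_\ell / \partial b_k = a_{\ell-k}$. The resulting $(n+r)\times(n+r)$ Jacobian is, up to row reordering, the Sylvester matrix of $f$ and $g$, whose structure (already exploited in Wethington's thesis) drives the entire argument. At the origin, the only non-zero entries are $1$'s at positions $(\ell,i)$ with $\ell = i+r$ in the $a$-block and at $(\ell,k)$ with $\ell = k+n$ in the $b$-block; a direct count gives $\rank(dJ)_0 = n$ and hence $i_1 = \corank I(\mu_{n,r}) = r$, as the conjecture predicts.

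The heart of the argument is a plateau-and-drop statement for the first $q_1$ stages. \emph{Plateau}: for every $0 \le j < q_1$, the iterated critical extension $(\Delta^r)^{j} I(\mu_{n,r})$ still has corank $r$, because each $(n+1) \times (n+1)$ minor of the Jacobian, reduced modulo the previously built ideal, continues to lie in the square of the maximal ideal. \emph{Drop}: the $q_1$-fold extension $(\Delta^r)^{q_1} I(\mu_{n,r})$ is generated, modulo higher-order corrections that do not affect the Thom-Boardman symbol, by functions matching the coefficient functions of $\mu_{r, r_1}$, via an explicit identification of the relevant local rings. Together these two facts produce an opening block of $q_1$ copies of $r$ followed by a drop to $r_1$, and the induction hypothesis supplies the rest of the sequence.

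The principal obstacle is the drop statement. It is not enough to show that the corank becomes $r_1$ after $q_1$ extensions; one needs the stronger claim that the entire remaining tower of critical extensions is isomorphic to that of a genuine smaller polynomial multiplication map. To establish this I would perform Sylvester-type row and column operations on the Jacobian modulo the current ideal, mirroring the polynomial division $n = q_1 r + r_1$. After $q_1$ such reductions, the effective Sylvester block visible modulo the ideal should be the one corresponding to $g$ and $f \bmod g$, and matching this effective Jacobian with the Jacobian of $\mu_{r, r_1}$ through a change of coordinates in the local ring would close the induction.
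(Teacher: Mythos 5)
Your outline does follow the same road map as the paper --- Sylvester matrix for the first entry $i_1=r$, a plateau of $q_1$ coranks equal to $r$, then a reduction of the remaining tower of critical extensions to the smaller multiplication map $\mu_{r,r_1}$ and induction along the Euclidean algorithm --- but the two steps you flag with ``should'' are precisely where all the content lies, and as stated both have genuine gaps. For the plateau, it is not enough to reason about minors of ``the Jacobian'': at stage $j$ the relevant matrix is the Jacobian of a \emph{generating set} of $(\Delta^r)^{j-1}I(\mu_{n,r})$, and a priori that generating set consists of all $(n+1)\times(n+1)$ minors accumulated from every previous stage --- exponentially many functions. Without a small explicit generating set you cannot even write down the matrix whose corank you must compute, let alone check that the newly adjoined minors contribute nothing to the differential at the origin. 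This generator explosion is exactly what limited earlier computations to $n+r\le 10$, and the paper's main device is to exhibit $r$ explicit new generators per stage, $\psi_{sr+i}=\partial^{s}d_{r-1}/\partial^{s-1}b_0\,\partial b_{r-1-i}$ built from the coefficients $d_i$ of the Toeplitz quotient $B^{-1}A$, together with the identity $\bigl(\partial^{s}d_i/\partial^{s-1}b_0\,\partial b_j\bigr)=-s\bigl(\partial^{s}d_i/\partial^{s-1}b_0\,\partial a_j\bigr)D$ and a lemma showing a single row of the derivative matrix generates the rest; these are what make every stage-$s$ minor land in $(\psi_0,\dots,\psi_{sr-1})$. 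Your proposal contains no mechanism for this.

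The drop step also contains a substantive error: the polynomial replacing $f$ in the reduced problem is \emph{not} $f\bmod g$. Modulo the ideal $(\psi_0,\dots,\psi_{q_1r-1})$ the correct relation is multiplicative, $A\equiv B^{q_1}\Gamma$ with $\Gamma$ of degree $r_1$ in the shift matrix, and it is the coefficients $\gamma_{n-1},\dots,\gamma_{n-r_1}$ of the truncated $B^{-q_1}A$ that define $f_2$ and hence $\mu_{r,r_1}$; correspondingly $h=fg$ is congruent to $f_2g^{q_1+1}$, not to $(f\bmod g)\cdot g$ plus lower-order terms. Matching the downstream critical extensions of $I(\mu_{n,r})$ with those of $I(\mu_{r,r_1})$ then still requires a change of local coordinates (so that $\psi_0,\dots,\psi_{q_1r-1},\gamma_{n-r_1},\dots,\gamma_{n-1},b_0,\dots,b_{r-1}$ are coordinates), a proof that the $r_1$ generators produced at the drop agree with those produced by the first extension of $I(\mu_{r,r_1})$ modulo the earlier ideal, and a block computation showing the two Jacobians have equal corank and that their minors generate the same ideal. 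None of this follows from ``Sylvester-type row operations mirroring polynomial division,'' and with the additive remainder your identification of the effective Jacobian would come out wrong. So the architecture is right, but the proof is not yet there.
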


In this paper, we prove Varley's Conjecture, that is, we have 

\begin{thm} $TB(I(\mu_{n,r}))=I(n,r)$ is true for any $n \ge r$.
\end{thm}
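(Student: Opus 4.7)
My approach is a strong induction on $r$, driven by the observation that the Jacobian matrix of $\mu_{n,r}$ at $(f,g)$ coincides, up to reordering of rows and columns, with the Sylvester matrix of $f$ and $g$. This yields the clean formula
\[
\corank D\mu_{n,r}(f,g) = \deg\gcd(f,g),
\]
because the kernel of $D\mu_{n,r}$ at $(f,g)$ consists of pairs $(p,q)$ with $\deg p<n,\,\deg q<r$ satisfying $pg+qf=0$, a space of dimension $\deg\gcd(f,g)$. At the origin $f_{0}=x^{n},\,g_{0}=x^{r}$ this gives $i_{1}=r$, matching the first entry of $I(n,r)$.

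The central step is the analysis of the critical Jacobian extension $\Delta^{r}I(\mu_{n,r})$. Performing Euclidean division on the source by writing $f=gs+t$ with $\deg s=n-r$ and $\deg t<r$ gives new source coordinates $(s,t,g)$ in which the map becomes $\mu_{n,r}=sg^{2}+tg$ and the set-theoretic singular locus $\{g\mid f\}$ becomes the linear subspace $\{t=0\}$. I would exhibit a family of $(n+1)\times(n+1)$ subresultant-type minors of the Sylvester matrix whose leading linear parts in the new coordinates span the $t$-directions; combined with the $n$ linear parts already present in $I(\mu_{n,r})$, Nakayama's lemma shows that $\Delta^{r}I(\mu_{n,r})$ equals $I(\nu)+(t_{0},\ldots,t_{r-1})$ modulo $\mathfrak m^{2}$, where $\nu(s,g)=sg^{2}$. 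Consequently the Thom-Boardman tail of $I(\mu_{n,r})$ coincides with that of $I(\nu)$.

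Iterating this identification, after $k$ reductions the relevant map is $\nu_{k}(s,g)=sg^{k+1}$ with $\deg s=n-kr$. Its Jacobian corank at the origin equals $\deg\gcd(x^{n-kr},x^{r})=\min(n-kr,r)$, which is $r$ while $n-kr\ge r$ and drops to $r_{1}$ precisely at $k=q_{1}$. This peels off the first $q_{1}$ entries $r,\ldots,r$ of the Thom-Boardman symbol. The remaining problem is to compute the symbol of $\nu_{q_{1}}(s,g)=sg^{q_{1}+1}$ with $\deg s=r_{1}<r$. At the origin the Jacobian of $\nu_{q_{1}}$ is obtained from that of $\mu_{r,r_{1}}$ by left multiplication by the shift-by-$q_{1}r$ matrix corresponding to the leading unit of $g^{q_{1}}$; since this left-multiplier has unit diagonal, the two Jacobians have identical nonzero minors at the origin and the identification propagates through every successive critical extension modulo appropriate higher-order corrections. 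Therefore $TB(I(\nu_{q_{1}}))=TB(I(\mu_{r,r_{1}}))$, and the inductive hypothesis (valid since $r_{1}<r$) yields $I(r,r_{1})$; concatenation with the initial $q_{1}$ copies of $r$ produces $I(n,r)$. The base case is $r\mid n$: the same iteration gives $q_{1}=n/r$ critical extensions each with corank $r$, terminating at the maximal ideal.

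The main obstacle is the identification work in the second and third paragraphs: exhibiting explicit $(n+1)\times(n+1)$ Sylvester minors whose leading linear parts in the Euclidean-division coordinates actually span $t_{0},\ldots,t_{r-1}$, and then propagating the matrix-factorization relation $J(\nu_{k})=M_{k}\,J(\mu)+(\text{lower-order})$ through every iteration of the Thom-Boardman procedure so that the Boardman ideal at each stage stays under algebraic, not merely set-theoretic, control. The subresultant theory of the Sylvester matrix paired with repeated use of Nakayama's lemma is what makes the induction close.
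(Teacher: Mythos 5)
Your skeleton is essentially the right one, and it genuinely reorganizes the paper's argument: where the paper constructs explicit generators $\psi_j,\varphi_j$ by Toeplitz-matrix manipulations and derivative identities, you propose Euclidean-division coordinates $f=gs+t$ so that each critical extension adjoins the remainder coordinates and reduces $\mu_{n,r}$ to $\nu(s,g)=sg^{2}$, then to $sg^{k+1}$, and finally to $\mu_{r,r_{1}}$ by strong induction on $r$. The target identities are in fact exactly true --- $\Delta^{r}I(\mu_{n,r})=I(sg^{2})+(t_{0},\dots,t_{r-1})$ is equivalent to the paper's Proposition 2.1, since the ideal $(d_{0},\dots,d_{r-1})$ there is precisely the ideal of coefficients of $f\bmod g$ --- and your final reduction $TB(I(\nu_{q_{1}}))=TB(I(\mu_{r,r_{1}}))$ is cleaner than you fear: the coefficient vector of $sg^{q_{1}+1}=(sg)\,g^{q_{1}}$ is obtained from that of $sg$ by the unipotent lower-triangular Toeplitz matrix of the monic $g^{q_{1}}$, so the two ideals are literally equal and the Jacobian extension, being an invariant of the ideal, needs no ``higher-order corrections'' at that step.

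The genuine gap is that every stage of the Boardman construction requires the critical extension to be identified \emph{exactly} as an ideal, and your mechanism does not deliver that. Knowing $\Delta^{r}I(\mu_{n,r})$ only modulo $\mathfrak m^{2}$ determines the corank of its Jacobian but not the ideal generated by its $(n+1)\times(n+1)$ minors, so it cannot control $\Delta^{r}\Delta^{r}I(\mu_{n,r})$ or any later extension. Moreover, exhibiting subresultant minors whose linear parts are $t_{0},\dots,t_{r-1}$ and invoking Nakayama only shows $t_{i}\in\Delta^{r}I(\mu_{n,r})+\mathfrak m^{2}$, not $t_{i}\in\Delta^{r}I(\mu_{n,r})$ (a minor of the form $t_{i}+(\text{quadratic})$ does not put $t_{i}$ itself in the ideal), and it says nothing about the reverse inclusion that \emph{all} $(n+1)\times(n+1)$ minors lie in $I(\nu)+(t_{0},\dots,t_{r-1})$. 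Closing these two inclusions at every stage is the entire technical content of the paper: the row reduction to matrix $(2.2)$ for the first step, the derivative identities $\bigl(\partial^{s}d_{i}/\partial^{s-1}b_{0}\,\partial b_{j}\bigr)=-s\bigl(\partial^{s}d_{i}/\partial^{s-1}b_{0}\,\partial a_{j}\bigr)D$ of Lemmas 3.2 and 3.5 together with Lemma 3.11 for the iterates corresponding to $sg^{k+1}$, and Lemmas 3.9, 3.15, 3.16 with Theorem 3.14 to propagate the identification into the recursive stage. Your ``main obstacle'' paragraph correctly locates the difficulty, but as written the proposal defers precisely the content that constitutes the proof.
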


One of the difficulties in computing Thom-Boardman symbol is that if we simply add all $(n+r-i_j+1)$ minors into the ideal representing the $j$-th critical extension of $I(\mu_{n,r})$ the number of generators grows exponentially. In her dissertation, Wethington confirmed Varley's Conjecture for all cases $n+r \le 10$ by computer. The memory demands grew exponentially for those calculations. In this paper, we overcome this difficulty by carefully choosing the generators at each step of the critical extensions. Specifically, we find a group of polynomials such that at each step of the critical extensions we only need to add the same number of polynomials indexed by the corresponding entry in $I(n,r)$. We construct these polynomials explicitly and prove that they have the desired property.

This paper is organized as follows: in section $2$ we discuss the first critical extension of $I(\mu_{n,r})$ and prove Varley's Conjecture in the special case $n=r$. In section $3$, we first prove some properties of lower Toeplitz matrices and then construct $(q_1r+r_1)$ polynomials $\psi_0, \cdots, \psi_{q_1r-1}; \psi_{q_1r}, \cdots, \psi_{q_1r+r_1-1}$ explicitly. We show that the $s$-th critical extension of $I(\mu_{n,r})$ is exactly obtained from the previous one by adjoining $\psi_{(s-1)r}, \cdots, \psi_{sr-1}$ for $1 \le s \le q_1$ and the $(q_1+1)$-th critical extension is the $q_1$-th one adjoining $\psi_{q_1r+1}, \cdots, \psi_{q_1r+r_1-1}$. Denote $f_0(x)=f(x)$, $f_1(x)=g(x)$, $r_1=n$ and $r_0=r$. Starting from $f_0(x)$ and $f_1(x)$, we construct a sequence of polynomials $f_2(x), \cdots, f_{k+1}(x)$ inductively such that the degree of $f_i(x)$ is $r_{i-1}$ and the multiplication of $f_{i}(x)$ and $f_{i+1}(x)$ gives a map $\mu_{r_{i-1},r_i}: \C^{r_{i-1}} \times \C^{r_i} \rightarrow \C^{r_{i-1}+r_i}$. Following the same idea we can produce $(q_{i+1}r_i+r_{i+1})$ polynomials with the property that at each of the next $(q_{i+1}+1)$ steps of the critical extensions of $I(\mu_{n,r})$ we only need to add the same number of polynomials indexed by the entries $(r_i, \cdots, r_i, r_{i+1})$ in $I(n,r)$. After adding all such polynomials into $I(n,r)$, we reach the maximal ideal $\bf{m}$. Therefore the rest of the entries in $TB(I(\mu_{n,r}))$ are zeros and Varley's Conjecture follows. 

{\bf Acknowledgment.} We appreciate Professor Robert Varley for his detailed explanation about the motivation to  compute Thom-Boardman symbols of polynomial multiplication maps. Without his help, this collaboration would have never happened.

\section{The First Critical Extension of $I(\mu_{n,r})$}

Let $I(\mu_{n,r})$ be the ideal generated by $c_{n+r-1}, c_{n+r-2}, \cdots, c_0$ defined by the multiplication map $\mu_{n,r}$. There is an interesting fact that becomes obvious when taking the Jacobian $\delta I(\mu_{n,r})$. Taking the derivatives of $c_j$'s in descending order from $n+r-1$ to $0$ with respect to the $a_i$'s and $b_i$'s in descending order from $n-1$ to $0$ and $r-1$ to $0$ respectively, we get the following:  

{\small
\begin{equation}
\begin{split}
\delta I(\mu_{n,r})= 
\begin{pmatrix} 
1 & 0&0& \cdots& \cdots& 0 & 1 & 0& \cdots & 0\cr
b_{r-1} & 1 & 0  &\cdots &\cdots&  0 & a_{n-1} &1&\cdots&0\cr
b_{r-2}& b_{r-1} & 1 & \cdots&\cdots& \cdots & a_{n-2}&a_{n-1}&\cdots&0\cr
\vdots & b_{r-2} & b_{r-1} &\cdots& \cdots&\cdots & \vdots& a_{n-2}&\cdots&1\cr
\vdots &\vdots& b_{r-2}&\cdots&\cdots&1&\vdots&\vdots&\cdots&a_{n-1}\cr
\vdots &\vdots& \vdots&\cdots&\cdots& b_{r-1}&\vdots&\vdots&\cdots&a_{n-2}\cr
b_0 &\vdots& \vdots &\cdots&\cdots&b_{r-2} &\vdots&\vdots&\cdots&\vdots\cr
0 &b_0&\vdots&\cdots&\cdots& 0&\vdots&\vdots&\cdots&\vdots\cr
0 & 0& b_0& \cdots&\cdots&\vdots&a_0 & \vdots&\cdots& \vdots\cr
\vdots & \vdots& \vdots& \vdots&\vdots&\vdots&0& a_0 &\cdots& \vdots\cr
\vdots & \vdots& \vdots& \vdots&\vdots&\vdots& \vdots & \vdots &\cdots& \vdots \cr
0 & 0 &0&\cdots&\cdots&b_0&0& 0 &\cdots& a_0 \cr
\end{pmatrix}
\end{split}
\end{equation}
}

This is the Sylvester matrix for $f$ and $g$. The rank of the Sylvester matrix for two polynomials when evaluated at the origin is the larger of the two degrees and thus the corank is the smaller. This gives the first entry of $TB(\mu_{n,r})$ for any $n \ge r$; $i_1=\corank (\delta I(\mu_{n,r}))=r$.

\medskip

Let $d_{n-1}=a_{n-1}-b_{r-1}$, $d_{n-j}=a_{n-j}-b_{r-j}-\underset{i+k=j}{\sum} d_{n-i} b_{r-k}, \hskip .1 cm \text{for} \hskip .1 cm j \le r$ and $d_{n-j}=a_{n-j}-\underset{i+k=j}{\sum} d_{n-i} b_{r-k}, \hskip .1 cm \text{for} \hskip .1 cm r<j \le n$. The following is true.

\begin{prop}
$\Delta^r I(\mu_{n,r})=I(\mu_{n,r})+(d_0, \cdots, d_{r-1})$
\end{prop}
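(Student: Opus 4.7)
The plan is to perform elementary row operations on the Jacobian $M = \delta I(\mu_{n,r})$ to bring it into a block form whose structure exposes the $d_j$'s directly, and then to use the fact that such row operations preserve the ideal of $k$-minors of $M$. Write $M$ as $\bigl(\begin{smallmatrix} L & A_1 \\ L' & A_2 \end{smallmatrix}\bigr)$, where $L$ is the $n \times n$ block consisting of the first $n$ rows and the $a$-columns. From the Sylvester form in $(2.1)$, $L$ is unit lower triangular with $(i,j)$-entry $b_{r-i+j}$ (using $b_r = 1$ and $b_k = 0$ for $k \notin [0,r]$), hence invertible over the algebra of germs $\CA$. Left multiplication by $\bigl(\begin{smallmatrix} L^{-1} & 0 \\ -L'L^{-1} & I_r \end{smallmatrix}\bigr)$ produces a reduced matrix $\tilde M = \bigl(\begin{smallmatrix} I_n & X \\ 0 & Y \end{smallmatrix}\bigr)$, where $X = L^{-1} A_1$ is $n \times r$ and $Y = A_2 - L'L^{-1} A_1$ is $r \times r$.

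The central claim is that $X_{ij} = d_{n-i+j}$ (with the conventions $d_n = 1$ and $d_k = 0$ for $k > n$) and that every entry $Y_{k,\ell}$ lies in the ideal $(d_0, \dots, d_{r-1})$. The first assertion is proved by induction on $i$: unrolling the linear system $LX = A_1$ row by row gives $X_{ij} = a_{n-i+j} - \sum_{k<i} b_{r-i+k} X_{kj}$, and after substituting the inductive values this is precisely the defining recursion for $d_{n-i+j}$. For the second assertion, continuing the row reduction into rows $n+1, \dots, n+r$ produces entries whose expressions match the same recursion, but with indices pushing either $b$-subscripts negative (so those terms vanish) or $d$-subscripts into $\{0, \dots, r-1\}$; collecting like terms shows each $Y_{k,\ell}$ is a polynomial combination of $d_0, \dots, d_{r-1}$. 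Conceptually this encodes the polynomial identity $D(x) g(x) = x^r f(x) + R(x)$ with $\deg R < r$, where $D(x) = x^n + \sum_{\ell=0}^{n-1} d_\ell x^\ell$; the entries of $Y$ arise as derivatives of the coefficients of $R$, which are themselves bilinear in $d_\ell$ (for $\ell < r$) and $b_k$.

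Once $\tilde M$ is identified, every $(n+1) \times (n+1)$ minor is either zero (whenever a column in $\{1, \dots, n\}$ is chosen but its unique nonzero entry in the $I_n$ block lies outside the chosen rows) or, by cofactor expansion along the identity block, equal up to sign to an $s \times s$ minor of $Y$ for some $s \ge 1$; all such minors lie in $(d_0, \dots, d_{r-1})$. Conversely, the generators $d_0, d_1, \dots, d_{r-1}$ all appear among the minors (for instance, choosing rows $\{1, \dots, n, n+k\}$ and columns $\{1, \dots, n, n+\ell\}$ realizes $Y_{k,\ell}$ as a single $(n+1)$-minor, and varying $k,\ell$ recovers each $d_j$), so the ideal of $(n+1)$-minors of $\tilde M$ equals $(d_0, \dots, d_{r-1})$; since elementary row operations preserve this ideal, the same holds for $M$, proving $\Delta^r I(\mu_{n,r}) = I(\mu_{n,r}) + (d_0, \dots, d_{r-1})$. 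The main obstacle will be the inductive verification that every entry of $Y$ lies in $(d_0, \dots, d_{r-1})$: this is where the precise form of the recursion defining $d_j$ is essential, tailored exactly to force the remainder coefficients to factor through $d_0, \dots, d_{r-1}$; care is also needed at boundary indices where the conventions $d_n = 1$, $d_k = 0$ for $k > n$, and $b_k = 0$ for $k \notin [0, r]$ interact with the recursion.
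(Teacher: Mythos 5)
Your proof is correct and follows essentially the same route as the paper: row-reduce the Sylvester-form Jacobian to $\bigl(\begin{smallmatrix}I_n & X\\ 0 & Y\end{smallmatrix}\bigr)$, identify the entries of $X$ with the $d_j$'s via their defining recursion, check that every entry of $Y$ lies in $(d_0,\dots,d_{r-1})$, and read off the ideal of $(n+1)\times(n+1)$ minors by Laplace expansion. The paper organizes the elimination as left multiplication by the full lower-Toeplitz inverse $(B_{n+r})^{-1}$ rather than your block elimination, which only changes $Y$ by a unit lower-triangular factor, so the two arguments are essentially identical.
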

\begin{proof}

By the definition of critical extension, $\Delta^r I(\mu_{n,r})$ is the sum of $I(\mu_{n,r})$ and the ideal spanned by all the $(n+1) \times (n+1)$ minors of $\delta I(\mu_{n,r})$. The later one is unchanged under elementary row operations on $\delta I(\mu_{n,r})$. We can do row operations on $\delta I(\mu_{n,r})$ as follows (in next section, we will describe these operations in matrix language). 

\medskip

Multiply the first row by $-b_{r-i}$ and add it to the $(i+1)$-th row for $i=1, \cdots, r$. After that, multiply the second row by $-b_{r-i}$ and add it to the $(i+2)$-th row for $i=1, \cdots, r$. Continue this process until all the $b_0, \cdots, b_{r-1}$ disappear from the first $n$ columns. After that, multiply the $(n+1)$-th row by $-b_{r-i}$ and add it to the $(n+i+1)$-th row for $i=1, \cdots, r-1$. For each $j=2, \cdots, r-1$, starting from $j=2$, we can multiply the $(n+j)$-th row by $-b_{r-i}$ and add it to the $(n+j+i)$-th row for $i=1, \cdots, r-j$. At the end, we get a matrix with the following form:

{\small
\begin{equation}
\begin{split}
\begin{pmatrix} 
1 & 0&0& \cdots&  0& 1& 0 & \cdots &0\cr
0 & 1 & 0   &\cdots&  0 & d_{n-1}&\ddots&\ddots &0\cr
0& 0 & 1 & \cdots& \cdots & d_{n-2}&\ddots&\ddots &\vdots\cr
\vdots & 0 &\cdots& \cdots&\cdots & \vdots&\ddots&\ddots &1\cr
\vdots &\vdots&\cdots&\cdots& 1 & \vdots&\ddots&\ddots &\vdots\cr
0 &\vdots& \cdots&\cdots& 0 &d_0&\ddots&\ddots &d_{r-1}\cr
\vdots & \vdots& \vdots&\vdots&\vdots & * &\ddots& \ddots & d_{r-2}\cr
\vdots & 0 &\cdots& \cdots&\cdots & *&*&\ddots &\vdots\cr
0 & 0 &\cdots&\cdots&0&* &*& * &d_0 \cr
\end{pmatrix}
\end{split}
\end{equation}
}

The elements in the position marked with \lq\lq *" in matrix $(2.2)$ can be generated by $d_0, \cdots, d_{r-1}$. Now it is easy to see that the ideal of all the $(n+1) \times (n+1)$ minors of the matrix $(2.2)$ is generated by $d_0, \cdots, d_{r-1}$, so is that of $\delta I(\mu_{n,r})$. Proposition $2.1$ follows.

\end{proof}

As an easy consequence of Proposition $2.1$, we have the following corollary.

\begin{cor}
$TB(I(\mu_{n,n}))=(n)=I(n,n)$ for any positive integer $n$.
\end{cor}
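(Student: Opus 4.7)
The plan is to combine Proposition~2.1 (specialized to $r=n$) with a direct inductive argument that the ideal $I(\mu_{n,n}) + (d_0, \ldots, d_{n-1})$ coincides with the maximal ideal $\mathbf{m}$ of $\CA$. Once we have this, the Thom-Boardman sequence terminates after one critical extension: since $i_1 = \corank(I(\mu_{n,n})) = n$ by the discussion of the Sylvester matrix, and since $\Delta^n I(\mu_{n,n}) = \mathbf{m}$ has corank $0$, we get $TB(I(\mu_{n,n})) = (n)$. On the other hand the Euclidean algorithm applied to $(n,n)$ terminates immediately with $q_1 = 1$ and no remainder, so $I(n,n) = (n)$ by definition, matching the claimed symbol.

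The crux is therefore to verify that every coordinate function $a_i, b_i$ lies in $I(\mu_{n,n}) + (d_0, \ldots, d_{n-1})$. I would prove by descending induction on the index that $a_{n-j}, b_{n-j} \in \Delta^n I(\mu_{n,n})$ for $j = 1, 2, \ldots, n$. For the base case $j=1$, the generators are $c_{2n-1} = a_{n-1} + b_{n-1}$ and $d_{n-1} = a_{n-1} - b_{n-1}$, and taking half-sum and half-difference (we are over $\C$) shows $a_{n-1}, b_{n-1} \in \Delta^n I(\mu_{n,n})$.

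For the inductive step, assume $a_{n-i}, b_{n-i} \in \Delta^n I(\mu_{n,n})$ for all $i < j$. The defining formulas give
\[
c_{2n-j} = a_{n-j} + b_{n-j} + \sum_{\substack{i+k=2n-j\\ i,k \ge n-j+1}} a_i b_k,
\qquad
d_{n-j} = a_{n-j} - b_{n-j} - \sum_{\substack{i+k=j\\ i,k \ge 1}} d_{n-i}\, b_{n-k}.
\]
In the first sum, each pair has $i, k \ge n-j+1$, so at least one of $a_i, b_k$ appears with index strictly greater than $n-j$ and is hence in the ideal by the inductive hypothesis; thus the whole sum lies in $\Delta^n I(\mu_{n,n})$, and so does $a_{n-j} + b_{n-j}$. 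In the second sum, each $d_{n-i}$ with $1 \le i \le j-1$ is a generator of $\Delta^n I(\mu_{n,n})$, so the sum lies there, and the same is true of $d_{n-j}$ itself; hence $a_{n-j} - b_{n-j} \in \Delta^n I(\mu_{n,n})$. Combining, both $a_{n-j}$ and $b_{n-j}$ are in the ideal.

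The main obstacle is purely notational bookkeeping: one must verify that the convolution-like sums in both $c_{2n-j}$ and $d_{n-j}$ never involve the un-inducted variables $a_{n-j}$ or $b_{n-j}$, i.e. that the index ranges stay strictly above $n-j$. This is immediate from the constraint $i+k = 2n-j$ with $i,k < n$ (forcing $i,k > n-j$) and $i+k=j$ with $i,k \ge 1$ (forcing $i,k < j$). Once this is confirmed, the induction closes, $\Delta^n I(\mu_{n,n}) = \mathbf{m}$, and the corollary follows at once from the rank computation of the Sylvester matrix made earlier in this section.
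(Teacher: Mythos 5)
Your proof is correct, and it takes a genuinely different route for the key step. Both you and the paper reduce to Proposition~2.1 and the fact that the first entry is $n$, but where the paper then forms the Jacobian $\delta\Delta^n I(\mu_{n,n})$ of the enlarged generating set, evaluates it at the origin to get $\left(\begin{smallmatrix} I_n& I_n\\ 0 & 0 \\ I_n& -I_n \end{smallmatrix}\right)$, and reads off corank $0$ from a rank computation, you instead prove the stronger, purely algebraic statement that $I(\mu_{n,n})+(d_0,\dots,d_{n-1})$ literally equals the maximal ideal $\mathbf{m}$, by descending induction showing each coordinate $a_{n-j}, b_{n-j}$ lies in the ideal (half-sum and half-difference of $c_{2n-j}$ and $d_{n-j}$ modulo terms already handled). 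Your index bookkeeping checks out: in $\sum_{i+k=2n-j}a_ib_k$ the constraints $i,k\le n-1$ force $i,k\ge n-j+1$, and in $\sum_{i+k=j}d_{n-i}b_{n-k}$ the constraints $i,k\ge 1$ force $i,k\le j-1$, so no un-inducted variable appears and every $d_{n-i}$ occurring is itself a generator. What each approach buys: the paper's linear-algebra-at-the-origin argument is shorter and matches the corank formalism used throughout the rest of the paper (it is the template for the later, harder extensions where explicit ideal membership would be unmanageable); your argument is more elementary and self-contained, exhibits the termination condition $\Delta^{i_1}J=\mathbf{m}$ from the definition of the Thom--Boardman symbol directly rather than via the corank criterion, and requires no evaluation of derivatives at the origin --- at the cost of the convolution-index verification, which you carried out correctly.
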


\begin{proof}

From the discussion on the first Jacobian, we know the first entry in $TB(I(\mu_{n,n}))$ is $n$.

\medskip

To show that $TB(I(\mu_{n,n}))=(n)$, we only need to prove that the corank of $\delta \Delta^n I(\mu_{n,n})$ evaluated at origin is $0$.

\medskip

By Proposition $2.1$, $\Delta^n I(\mu_{n,n})=I(\mu_{n,n})+(d_0, \cdots, d_{n-1})$, so $\delta \Delta^n I(\mu_{n,n})$ has the following form when evaluated at origin.
{\small
\begin{equation}
\begin{split}
\begin{pmatrix} 
I_n& I_n\cr
0 & 0   \cr
I_n& -I_n\cr
\end{pmatrix}
\end{split}
\end{equation}
}

whose corank is obviously equal to $0$, hence $TB(I(\mu_{n,n}))=(n)$.

\end{proof}

\section{Proof of Theorem 1.2}

\subsection{Toeplitz matrices}

Before we give a proof of Theorem $1.2$, let us discuss some properties on certain class of matrices called Toeplitz matrices.

A $n \times n$ Toeplitz matrix is a matrix in which each descending diagonal from left to right is constant. The lower shift matrix $L_n$ is a $n \times n$ binary matrix with ones only on the subdiagonal and zeroes elsewhere. It is obvious that $L_n$ is Toeplitz. Moreover, it is nilpotent.

A matrix $V$ is called a lower Toeplitz matrix if $V=vI_n+v_{m-1} L_n+v_{m-2} L_n^2+\cdots+v_{0} L_n^m$ for some $m (m \le n)$, where $I_n$ is the identity matrix and $v, v_0, \cdots, v_{m-1}$ are variables or constants.

The following lemma is true.

\begin{lem} Let $V=I_n+v_{m-1} L_n+v_{m-2} L_n^2+\cdots+v_{0} L_n^m$ and $W=I_n+w_{l-1} L_n+w_{l-2} L_n^2+\cdots+w_{0} L_n^l$ be two $n \times n$ lower Toeplitz matrices. Then 

\begin{enumerate}
\item $VW$ is a lower Toeplitz matrix and $VW=WV$.

\item $V^{-1}$ is a lower Toeplitz matrix and each entry below the diagonal is a polynomial in variables $v_0, \cdots, v_{m-1}$.

\end{enumerate}
\end{lem}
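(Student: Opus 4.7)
The plan is to view every lower Toeplitz matrix of the given form as a polynomial in the single nilpotent matrix $L_n$, and then deduce both parts from elementary facts about polynomials in a nilpotent matrix. The key ingredients are that $L_n^n = 0$ and that polynomials in a fixed matrix form a commutative subalgebra of the matrix algebra.

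First I would set $p(t) = 1 + v_{m-1} t + v_{m-2} t^2 + \cdots + v_0 t^m$ and $q(t) = 1 + w_{l-1} t + \cdots + w_0 t^l$, so that $V = p(L_n)$ and $W = q(L_n)$. For part (1), since any two polynomials in a fixed matrix commute, $VW = (pq)(L_n) = (qp)(L_n) = WV$, giving commutativity. Because $L_n^n = 0$, reducing $pq$ modulo $t^n$ does not change the matrix value, and the constant term is $1 \cdot 1 = 1$, so $VW = I_n + \sum_{i=1}^{n-1} c_i L_n^i$, which is lower Toeplitz of the prescribed shape.

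For part (2), write $V = I_n + N$ with $N = v_{m-1} L_n + v_{m-2} L_n^2 + \cdots + v_0 L_n^m$. Since $N$ is divisible by $L_n$ as a polynomial in $L_n$ and $L_n^n = 0$, one has $N^n = 0$, and the geometric series terminates:
$$V^{-1} = \sum_{k=0}^{n-1} (-N)^k = I_n - N + N^2 - \cdots + (-1)^{n-1} N^{n-1}.$$
This is again a polynomial in $L_n$ with constant term $1$, hence lower Toeplitz of the required form. Expanding each $(-N)^k$ and collecting the coefficient of $L_n^i$ expresses every subdiagonal entry as an integer polynomial in $v_0, \ldots, v_{m-1}$, proving the claimed polynomial dependence.

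There is essentially no serious obstacle beyond this algebraic observation; once one realizes that $V$ and $W$ lie in the commutative subalgebra $\C[L_n]$ of matrices, everything reduces to truncated polynomial arithmetic in one variable. The only bookkeeping step is to confirm that the constant term remains $1$ under both the product and the geometric inversion, which guarantees that the output is of the precise lower Toeplitz form required by the lemma.
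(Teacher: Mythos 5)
Your proof is correct and follows essentially the same route as the paper: both arguments identify $V$ and $W$ with elements of the commutative truncated polynomial algebra $\C[v_0,\dots,v_{m-1}][L_n]/(L_n^n)$, and your terminating geometric series for $(I_n+N)^{-1}$ is just an explicit form of the paper's long division of $1$ by $1+v_{m-1}L_n+\cdots+v_0L_n^m$ in that ring. No gaps.
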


\begin{proof}

It is easy to see that $VW=(I_n+v_{m-1} L_n+v_{m-2} L_n^2+\cdots+v_{0} L_n^m) (I_n+w_{l-1} L_n+w_{l-2} L_n^2+\cdots+w_{0} L_n^l)=WV$ and $VW$ is a lower Toeplitz matrix.

Using long division to $\frac{1}{1+v_{m-1} L_n+v_{m-2} L_n^2+\cdots+v_{0} L_n^m}$ in the formal power series ring

\noindent $\frac{\C [v_0,\cdots,v_{m-1}] [[L_n]]}{(L_n^n)} $, we immediately have that $V^{-1}$ is a lower Toeplitz matrix and each entry below the diagonal is a polynomial in variables $v_0, \cdots, v_{m-1}$.
\end{proof}

\subsection {Proof of Theorem 1.2}

Let $A_{n+r+1}=I_{n+r+1}+a_{n-1} L_{n+r+1}+a_{n-2} L_{n+r+1}^2+\cdots+ a_{0} L_{n+r+1}^n$ and $B_{n+r+1}=I_{n+r+1}+b_{r-1} L_{n+r+1}+b_{r-2} L_{n+r+1}^2+\cdots+ b_{0} L_{n+r+1}^r$ , where $a_i, b_j$ are the coefficients of $f(x)$ and $g(x)$ respectively. The first Jacobian matrix is

{\small
\begin{equation}
\begin{split}
\delta I(\mu_{n,r})= \begin{pmatrix}\begin{pmatrix} I_{n+r}, 0 \cr \end{pmatrix} B_{n+r+1} \begin{pmatrix} I_n\cr 0 \cr \end{pmatrix}, \begin{pmatrix} I_{n+r}, 0 \cr \end{pmatrix} A_{n+r+1} \begin{pmatrix} I_r\cr 0 \cr \end{pmatrix}\end{pmatrix}\end{split}
\end{equation}
}
 
Let $D_{n+r+1}=(B_{n+r+1})^{-1} A_{n+r+1}$. It is easy to see that the row operations we did in section $2$ on $\delta I(\mu_{n,r})$ is exactly multiplying $\delta I(\mu_{n,r})$ by $(B_{n+r})^{-1}$ on the left, where $B_{n+r}=\begin{pmatrix} I_{n+r}, 0 \cr \end{pmatrix} B_{n+r+1} \begin{pmatrix} I_{n+r}\cr 0 \cr \end{pmatrix}$. So  
$D_{n+r+1}=I_{n+r+1}+d_{n-1} L_{n+r+1}+d_{n-2} L_{n+r+1}^2+\cdots+ d_{0} L_{n+r+1}^n+d_{-1} L_{n+r+1}^{n+1}+\cdots+d_{-r} L_{n+r+1}^{n+r}$ for some $d_{-1}, \cdots, d_{-r}$. Comparing the corresponding coefficients $L_{n+r+1}^{n+j}$ for $j=1, \cdots, r$ on both sides of the equation $B_{n+r+1} D_{n+r+1} =A_{n+r+1}$, we have that $d_{-j}+b_{r-1} d_{-j+1}+\cdots+b_0 d_{r-j}=0$, which imply that $d_{-j}, j=1, \cdots, r$ are generated by $d_0, \cdots, d_{r-1}$. This coincides with what we said about the elements in the position marked with \lq\lq *" in matrix $(2.2)$.

From $A_{n+r+1}=B_{n+r+1} D_{n+r+1}$, it is easy to get the following equations:

{\small
$$\left\{\begin{array}{l} 
       a_{n-1}=d_{n-1} + b_{r-1} \\ 
       a_{n-2}=d_{n-2} + b_{r-2}+ b_{r-1} d_{n-1}  \\ 
       and\\
       a_{n-j}=d_{n-j} +b_{r-j} + \underset{i+k=j}{\sum} b_{r-k} d_{n-i} , \hskip .1 cm \text{for} \hskip .1 cm j \le r \\
       a_{n-j}=d_{n-j} + \underset{i+k=j}{\sum} b_{r-k} d_{n-i} , \hskip .1 cm \text{for} \hskip .1 cm r<j \le n \\
     \end{array} \right. $$
}
Taking derivatives with respect to $a_i$'s and $b_i$'s in descending order from $n-1$ to $0$ and $r-1$ to $0$ respectively in the above equations and using the Chain Rule, we have that

{\small
\begin{equation}
\begin{split}
\begin{pmatrix} I_n, & 0\cr \end{pmatrix} \end{split} = \begin{split}
\begin{pmatrix} B, & D\cr \end{pmatrix} \end{split}
\begin{split}
\begin{pmatrix} 
\begin{pmatrix} \frac{\partial d_i}{\partial a_j} \end{pmatrix}& \begin{pmatrix} \frac{\partial d_i}{\partial b_j} \end{pmatrix}\cr
0 & I_r\cr
\end{pmatrix}
\end{split}
\end{equation}
}

\noindent where {\small $B= (I_n, 0) B_{n+r+1} \begin{pmatrix} I_n\cr 0 \cr \end{pmatrix}$} and {\small $D= (I_n, 0) D_{n+r+1} \begin{pmatrix} I_r\cr 0 \cr \end{pmatrix}$}. This gives that

{\small
\begin{equation}
\begin{split}
B \begin{pmatrix} \frac{\partial d_i}{\partial a_j} \end{pmatrix}=I_n\\
B \begin{pmatrix} \frac{\partial d_i}{\partial b_j} \end{pmatrix} +D=0\\
\end{split}
\end{equation}
}

Let {\small $A= (I_n, 0) A_{n+r+1} \begin{pmatrix} I_r\cr 0 \cr \end{pmatrix}$}, then {\small $A= (I_n, 0) A_{n+r+1} \begin{pmatrix} I_r\cr 0 \cr \end{pmatrix}=(I_n, 0)B_{n+r+1}D_{n+r+1}\begin{pmatrix} I_r\cr 0 \cr \end{pmatrix}=(B, 0) \begin{pmatrix} D\cr * \cr \end{pmatrix}=BD$}.

Using Equation $(3.3)$ and {\small $A=BD$}, we can prove the following lemma.

\begin{lem} 
{\small $\begin{pmatrix} \frac{\partial^s d_{i}}{\partial^{s-1} b_0 \partial b_j} \end{pmatrix}=-s\begin{pmatrix} \frac{\partial^s d_{i}}{\partial^{s-1} b_0 \partial a_j} \end{pmatrix} D$} for $s=1, \cdots, q_1$.
\end{lem}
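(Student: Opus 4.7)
The plan is to exploit the identifications in Equation (3.3) --- $(\partial d_i/\partial a_j)_{i,j} = B^{-1}$ and $(\partial d_i/\partial b_j)_{i,j} = -B^{-1}D$ --- and to iteratively differentiate in $b_0$ by working inside the commutative ring of lower Toeplitz $n \times n$ matrices supplied by Lemma 3.1.

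First I would compute how the building blocks depend on $b_0$. Since $B_{n+r+1}$ contains $b_0$ only through the term $b_0 L_{n+r+1}^r$, we have $\partial B_{n+r+1}/\partial b_0 = L_{n+r+1}^r$, and after truncation to the upper-left $n\times n$ block, $\partial B/\partial b_0 = L_n^r$. From $B_{n+r+1}D_{n+r+1} = A_{n+r+1}$ with $A_{n+r+1}$ independent of $b_0$, differentiation yields $\partial D_{n+r+1}/\partial b_0 = -B_{n+r+1}^{-1}L_{n+r+1}^r D_{n+r+1}$. Because $B_{n+r+1}^{-1}L_{n+r+1}^r$ is lower Toeplitz by Lemma 3.1 (hence lower triangular), passing to the $n\times r$ upper-left block commutes with multiplication, giving $\partial D/\partial b_0 = -B^{-1}L_n^r D$. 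Similarly $\partial B^{-1}/\partial b_0 = -B^{-1} L_n^r B^{-1}$.

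Write $Y:=B^{-1}$ and $L:=L_n^r$; both are lower Toeplitz of size $n$, hence commute by Lemma 3.1. A short induction on $s$, using only the product rule and $YL=LY$, yields the closed forms
\begin{equation*}
\frac{\partial^s Y}{\partial b_0^s} = (-1)^s\, s!\, Y^{s+1}L^s, \qquad \frac{\partial^s D}{\partial b_0^s} = (-1)^s\, s!\, Y^s L^s D.
\end{equation*}
Applying the Leibniz rule to $\partial^{s-1}(YD)/\partial b_0^{s-1}$ and substituting these closed forms, every summand collapses (by commutativity of $Y$ and $L$) to $(-1)^{s-1}(s-1-k)!\,k!\,Y^s L^{s-1} D$, so the binomial sum reduces to $\sum_{k=0}^{s-1}\binom{s-1}{k}(s-1-k)!\,k! = \sum_{k=0}^{s-1}(s-1)! = s!$ and produces $(-1)^{s-1} s!\, Y^s L^{s-1} D$. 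On the other hand, $s\cdot\partial^{s-1} Y/\partial b_0^{s-1} = (-1)^{s-1} s!\, Y^s L^{s-1}$, so multiplying by $D$ on the right and then negating both sides yields the claim.

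The main technical point is the truncation: one must verify that $[B_{n+r+1}^{-1} L_{n+r+1}^r D_{n+r+1}]_{n\times r} = B^{-1} L_n^r D$, where $[\cdot]_{n\times r}$ denotes the upper-left block. This reduces to the elementary observation that if $Y$ is lower triangular of size $n+r+1$, then $[YZ]_{n\times r} = [Y]_{n\times n}\cdot [Z]_{n\times r}$ for any $Z$, because the sum defining $(YZ)_{ij}$ for $i\le n$ only involves $k\le n$. Lemma 3.1 guarantees that $B_{n+r+1}^{-1}L_{n+r+1}^r$ is lower Toeplitz and hence lower triangular, so two applications of this principle secure the truncated identity. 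Once truncation is justified, everything else happens inside the commutative ring of lower Toeplitz matrices, where the whole statement becomes a transparent consequence of Leibniz plus the combinatorial identity $\binom{s-1}{k}(s-1-k)!\,k! = (s-1)!$.
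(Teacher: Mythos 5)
Your proof is correct and follows essentially the same route as the paper's: both rest on Equation $(3.3)$, the identity $A=BD$, and the commutativity of lower Toeplitz matrices to arrive at the closed forms $(-1)^{s-1}(s-1)!\,B^{-s}(L_n^r)^{s-1}$ and $(-1)^{s}s!\,B^{-s}(L_n^r)^{s-1}D$ for the two iterated derivatives. The only cosmetic difference is that the paper isolates the $b_0$-dependence by substituting $\left(\frac{\partial d_i}{\partial b_j}\right)=-B^{-2}A$ with $A$ constant in $b_0$ and differentiating a single factor, whereas you differentiate $-B^{-1}D$ via the Leibniz rule and collapse the binomial sum; the results coincide.
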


\begin{proof} It is easy to see that $B= (I_n, 0) B_{n+r+1} \begin{pmatrix} I_n\cr 0 \cr \end{pmatrix}=I_n+ b_{r-1} L_n+ \cdots+ b_{0} L_n^r$ is a lower Toeplitz matrix. Its derivative with respect to $b_0$ is again a lower Toeplitz matrix, in fact, $\frac{\partial B}{\partial b_0}=L_n^r$. So $B\frac{\partial B}{\partial b_0}=\frac{\partial B}{\partial b_0} B$ by Lemma $3.1$ or direct verification.

The equation $\frac{\partial B}{\partial b_0}=L_n^r$ implies that any higher derivatives of $B$ with respect to $b_0$ is zero. From 
$B \begin{pmatrix} \frac{\partial d_i}{\partial a_j} \end{pmatrix}=I_n$ in Equation $(3.3)$, we have

\begin{equation}
\begin{split}
\begin{pmatrix} \frac{\partial d_i}{\partial a_j} \end{pmatrix}=B^{-1}
\end{split}
\end{equation}

Taking derivatives with respect to $b_0$ repeatedly on both sides of Equation $(3.4)$ gives

\begin{equation}
\begin{split}
\begin{pmatrix} \frac{\partial^s d_{i}}{\partial^{s-1} b_0 \partial a_j} \end{pmatrix}=(-1)(-2) \cdots (-s+1) B^{-s} \left(\frac{\partial B}{\partial b_0} \right)^{s-1}=(-1)^{s-1} (s-1)! B^{-s} \left(\frac{\partial B}{\partial b_0} \right)^{s-1}
\end{split}
\end{equation}

\noindent for any positive integer $s$.

From $B \begin{pmatrix} \frac{\partial d_i}{\partial b_j} \end{pmatrix} +D=0$ and $A=BD$, we have

\begin{equation}
\begin{split}
\begin{pmatrix} \frac{\partial d_i}{\partial b_j} \end{pmatrix}=-B^{-1}D=-B^{-2} A
\end{split}
\end{equation}

Taking derivatives with respect to $b_0$ repeatedly on both sides of Equation $(3.6)$ and using the commutativity $B\frac{\partial B}{\partial b_0}=\frac{\partial B}{\partial b_0} B$ give that

\begin{equation}
\begin{split}
\begin{pmatrix} \frac{\partial^s d_{i}}{\partial^{s-1} b_0 \partial b_j} \end{pmatrix}=(-1)(-2) \cdots (-s) B^{-s-1} \left(\frac{\partial B}{\partial b_0} \right)^{s-1} A=(-1)^{s} s! B^{-s} \left(\frac{\partial B}{\partial b_0} \right)^{s-1} D
\end{split}
\end{equation}

Now our lemma follows immediately from Equations $(3.5)$ and $(3.7)$.

\end{proof}

We also need the following lemma.

\begin{lem}
The $n \times r$ matrix $\begin{pmatrix} \frac{\partial d_i}{\partial b_j} \end{pmatrix}$ in Equation $(3.3)$ is the first $r$ columns in a $n \times n$ lower Toeplitz matrix; moreover, the elements $\frac{\partial d_i}{\partial b_j}$ for $i=0, \cdots, r-1$ and $j=0, \cdots, r-1$ can be generated by $\frac{\partial d_{r-1}}{\partial b_j}, j=0, \cdots, r-1$ and $d_1, \cdots, d_{r-1}$ if $q_1 \ge 2$.
\end{lem}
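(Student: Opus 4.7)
The plan is to derive both parts from Equation $(3.3)$, which rearranges to $(\partial d_i/\partial b_j) = -B^{-1}D$, together with Lemma $3.1$. Let $\tilde{D}$ denote the top-left $n \times n$ block of $D_{n+r+1}$. Since $L_{n+r+1}^k$ restricts to $L_n^k$ on this block for $k \le n-1$ and to $0$ for $k \ge n$, we have $\tilde{D} = I_n + d_{n-1}L_n + d_{n-2}L_n^2 + \cdots + d_1 L_n^{n-1}$, which is lower Toeplitz, and the first $r$ columns of $\tilde{D}$ are precisely $D$. By Lemma $3.1$, $B^{-1}$ is lower Toeplitz, hence so is $M := -B^{-1}\tilde{D}$, and $(\partial d_i/\partial b_j)$ is its first $r$ columns. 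This settles the first assertion.

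For the second assertion I would exploit the identity $BM = -\tilde{D}$ as an equation of polynomials in the nilpotent element $L_n$. Writing $B = \sum_{k=0}^{r} b_{r-k} L_n^k$, $\tilde{D} = \sum_{k=0}^{n-1} d_{n-k} L_n^k$ and $M = \sum_{k=0}^{n-1} m_k L_n^k$ (with conventions $b_r = d_n = 1$), equating coefficients of $L_n^l$ for each $l \ge r$ produces the recursion
\begin{equation*}
m_l \;=\; -d_{n-l} - b_{r-1} m_{l-1} - b_{r-2} m_{l-2} - \cdots - b_0 m_{l-r}.
\end{equation*}
A short index calculation shows $\partial d_i/\partial b_j = m_{n-r-i+j}$, so the $r \times r$ block with $0 \le i, j \le r-1$ consists of the Toeplitz coefficients $m_{n-2r+1}, m_{n-2r+2}, \ldots, m_{n-1}$ (with the expected repetitions), while the row $\partial d_{r-1}/\partial b_j$ for $j = 0, \ldots, r-1$ accounts precisely for the lower half $m_{n-2r+1}, \ldots, m_{n-r}$.

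The hypothesis $q_1 \ge 2$ now enters, guaranteeing $n \ge 2r$: this makes the recursion valid for every $l \in \{n-r+1, \ldots, n-1\}$ and forces each index $l-j$ appearing on the right-hand side to lie in $\{n-2r+1, \ldots, n-2\}$. Setting $l = n-r+k$ for $k = 1, \ldots, r-1$ yields
\begin{equation*}
m_{n-r+k} \;=\; -d_{r-k} - b_{r-1} m_{n-r+k-1} - \cdots - b_0 m_{n-2r+k},
\end{equation*}
and induction on $k$ places each $m_{n-r+k}$ in the ideal generated by $\{\partial d_{r-1}/\partial b_j : j = 0, \ldots, r-1\}$ and $d_1, \ldots, d_{r-1}$: the term $-d_{r-k}$ is one of the second set of generators, each $m_{n-r+k'}$ with $k' < k$ has been placed in the ideal at a previous step of the induction, and the remaining $m$'s on the right-hand side are already among the original $\partial d_{r-1}/\partial b_j$.

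The main obstacle I anticipate is bookkeeping rather than substance: one has to match the paper's indexing of $(\partial d_i/\partial b_j)$ with the Toeplitz coefficients of $M$, verify that $n-r-i+j$ sweeps exactly $\{n-2r+1, \ldots, n-1\}$, and confirm that the row $\partial d_{r-1}/\partial b_j$ fills precisely the lower half of that range so that the induction proceeds from the controlled end into the uncontrolled end. Once this alignment is pinned down, the recursion plus induction finishes the proof routinely.
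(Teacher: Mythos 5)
Your proposal is correct and follows essentially the same route as the paper: both identify $(\partial d_i/\partial b_j)=-B^{-1}\widehat{D}\begin{pmatrix} I_r\\0\end{pmatrix}$ with the first $r$ columns of the lower Toeplitz matrix $-B^{-1}\widehat{D}$ via Lemma $3.1$, and both extract the recursion from comparing coefficients of $L_n^l$ in $B\cdot(B^{-1}\widehat{D})=\widehat{D}$ for $l=n-r+1,\dots,n-1$ (your $m_l$ is the paper's $-t_{n-l}$), then induct to express the remaining Toeplitz coefficients in terms of $\partial d_{r-1}/\partial b_j$ and $d_1,\dots,d_{r-1}$, with $q_1\ge 2$ (i.e.\ $n\ge 2r$) ensuring the needed indices exist. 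The bookkeeping you flag does check out: $\partial d_i/\partial b_j=m_{n-r-i+j}$ sweeps exactly $\{n-2r+1,\dots,n-1\}$ and the row $i=r-1$ gives $m_{n-2r+1},\dots,m_{n-r}$, as claimed.
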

\begin{proof}
From $B \begin{pmatrix} \frac{\partial d_i}{\partial b_j} \end{pmatrix} +D=0$ in Equation $(3.3)$, we have that 
$\begin{pmatrix} \frac{\partial d_i}{\partial b_j} \end{pmatrix}=-B^{-1} D$. Let $\widehat{D}=I_n+d_{n-1} L_n+\cdots+d_1 L_n^{n-1}$, which is a lower Toeplitz matrix. Then $D=\widehat{D} \begin{pmatrix} I_r\\ 0 \end{pmatrix}$. By Lemma $3.1$, $-B^{-1}\widehat{D}$ is a lower Toeplitz matrix. So $\begin{pmatrix} \frac{\partial d_i}{\partial b_j} \end{pmatrix}=-B^{-1} D=-B^{-1} \widehat{D} \begin{pmatrix} I_r\\ 0 \end{pmatrix}$ is the first $r$ columns in the $n \times n$ lower Toeplitz matrix $-B^{-1} \widehat{D}$.

Denote $B^{-1} \widehat{D}$ as $I_n+t_{n-1}L_n+\cdots+t_1 L_n^{n-1}$, we have that

\begin{equation}
\begin{split}
I_n+d_{n-1} L_n+\cdots+d_1 L_n^{n-1}=\widehat{D}=B B^{-1} \widehat{D}=\cr(I_n+b_{r-1}L_n+\cdots+b_0 L_n^{r})(I_n+t_{n-1}L_n+\cdots+t_1 L_n^{n-1})
\end{split}
\end{equation}

Comparing the coefficients of $L_n^{k}$ for $k=n-r+1, \cdots, n-1$ in Equation $(3.8)$, we have that 

\begin{equation}
\begin{split}
d_i=t_i+b_{r-1}t_{i+1}+ \cdots +b_0t_{r+i} \hskip .2 cm \text{for} \hskip .2 cm i=1, \cdots, r-1
\end{split}
\end{equation}

From Equation $(3.9)$, it is easy to see that $t_1, \cdots, t_{r-1}$ are generated by $t_r, \cdots, t_{2r-1}$ and $d_1, \cdots, d_{r-1}$. From the equation $\begin{pmatrix} \frac{\partial d_i}{\partial b_j} \end{pmatrix}=-B^{-1} \widehat{D} \begin{pmatrix} I_r, & 0 \end{pmatrix}$ we see that $t_k=-\frac{\partial d_i}{\partial b_j}$ for $k=r-j+i$ , where $i=0, \cdots, r-1$ and $j=0, \cdots, r-1$. Hence the elements $\frac{\partial d_i}{\partial b_j}$ for $i=0, \cdots, r-1$ and $j=0, \cdots, r-1$ can be generated by $t_{2r-1-j}=\frac{\partial d_{r-1}}{\partial b_j}, j=0, \cdots, r-1$ and $d_1, \cdots, d_{r-1}$.

\end{proof}

As an easy consequence of Lemma $3.2$ and Lemma $3.3$, we have

\begin{prop}
$\Delta^r (\Delta^r I(\mu_{n,r}))=I(\mu_{n,r})+(d_0, \cdots, d_{r-1}, \frac{\partial d_{r-1}}{\partial b_{r-1}}, \cdots, \frac{\partial d_{r-1}}{\partial b_0})$ if $q_1 \ge 2$.
\end{prop}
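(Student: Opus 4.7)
The plan is to compute the $(n+1)$-minors of $\delta(\Delta^r I(\mu_{n,r}))$ and show that, modulo $I(\mu_{n,r}) + (d_0, \ldots, d_{r-1})$, they generate the ideal $(\frac{\partial d_{r-1}}{\partial b_0}, \ldots, \frac{\partial d_{r-1}}{\partial b_{r-1}})$. The Jacobian $\delta(\Delta^r I(\mu_{n,r}))$ has $n+2r$ rows: the top $n+r$ form the Sylvester matrix $\delta I(\mu_{n,r})$ and the bottom $r$ are $(\frac{\partial d_i}{\partial a_j}, \frac{\partial d_i}{\partial b_j})$ for $i = 0, \ldots, r-1$. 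Because $\Delta^r\Delta^r I$ equals $\Delta^r I$ together with these $(n+1)$-minors, I may reduce the minors modulo $\Delta^r I = I(\mu_{n,r}) + (d_0, \ldots, d_{r-1})$ while generating.

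First, I would left-multiply the top $n+r$ rows by $B_{n+r}^{-1}$, exactly the row operation used in the proof of Proposition 2.1. After this step and reducing modulo $(d_0, \ldots, d_{r-1})$, the top block takes the form $\begin{pmatrix} I_n & T \\ 0 & 0 \end{pmatrix}$, where $T$ is the top-left $n \times r$ block of $D_{n+r} = B_{n+r}^{-1} A_{n+r}$; Lemma 3.1 and the truncation discussion of Section 3.2 identify $T$ with the matrix $D$ of Equation (3.3). Next, I would use the identity block $I_n$ to sweep out the left half of the bottom rows $(E, F)$, where $E_{ij} = \frac{\partial d_i}{\partial a_j}$ and $F_{ij} = \frac{\partial d_i}{\partial b_j}$ for $0 \le i \le r-1$, turning $(E, F)$ into $(0,\, F - ET)$. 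By Equation (3.4), $E$ is the first $r$ rows of $B^{-1}$, and by Equation (3.6), $F$ is the first $r$ rows of $-B^{-1} D$; since $T = D$, one gets $ET = $ first $r$ rows of $B^{-1} D = -F$, hence $F - ET = 2F$. The Jacobian modulo $(d_0, \ldots, d_{r-1})$ thus takes the block upper-triangular form $\begin{pmatrix} I_n & T \\ 0 & 0 \\ 0 & 2F \end{pmatrix}$.

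Any $(n+1)$-minor of this reduced matrix that uses a middle zero row vanishes. For the remaining minors, Laplace expansion along the identity block shows each determinant is a linear combination, with coefficients built from entries of $I_n$ and $T$, of $s \times s$ minors of $F$ for some $1 \le s \le r$. In particular, the minors built from all $n$ top rows, a single bottom row, and columns $\{1, \ldots, n\} \cup \{n+j+1\}$ realize the individual entries $2F_{ij}$. Consequently, the ideal of $(n+1)$-minors modulo $(d_0, \ldots, d_{r-1})$ coincides with $(\frac{\partial d_i}{\partial b_j} : 0 \le i, j \le r-1)$. Applying Lemma 3.3, whose hypothesis $q_1 \geq 2$ is exactly the one in the proposition, this ideal coincides modulo $(d_1, \ldots, d_{r-1}) \subseteq (d_0, \ldots, d_{r-1})$ with $(\frac{\partial d_{r-1}}{\partial b_0}, \ldots, \frac{\partial d_{r-1}}{\partial b_{r-1}})$, which gives the claim.

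The main obstacle I anticipate is the bookkeeping in Step 2: the identification $T = D$ and the clean cancellation $F = -ET$ rest on the Toeplitz structure preserved under inversion (Lemma 3.1) and on carefully tracking the truncations between the matrices $B_{n+r+1}$, $B_{n+r}$, and $B$ of Section 3.2. Once this is settled, the minor-expansion step is mechanical from the block upper-triangular form, and the final passage to $\frac{\partial d_{r-1}}{\partial b_j}$ is delivered directly by Lemma 3.3.
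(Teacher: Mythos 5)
Your proposal is correct and follows essentially the same route as the paper: the same left multiplication by $B_{n+r}^{-1}$, the same sweep producing the block $-\left(\frac{\partial d_i}{\partial a_j}\right)D+\left(\frac{\partial d_i}{\partial b_j}\right)=2\left(\frac{\partial d_i}{\partial b_j}\right)$ (which is exactly Lemma 3.2 with $s=1$, derived as you do from Equations (3.4) and (3.6)), the same appeal to Lemma 3.3, and the same $1\times 1$-entry minors bordered by $I_n$ for the reverse inclusion. The only point the paper includes that you omit is the verification that the corank at the origin is $r$ (where $q_1\ge 2$ is used a second time); that is needed for the Thom–Boardman entry $i_2=r$ rather than for the stated ideal identity, so your argument for the proposition as written is complete.
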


\begin{proof}

By Proposition $2.1$, we have that $\Delta^r I(\mu_{n,r})=I(\mu_{n,r})+(d_0, \cdots, d_{r-1})$. To prove this corollary, we only need to show that the corank of $\delta (I(\mu_{n,r})+(d_0, \cdots, d_{r-1}))$ evaluated at origin is $r$ and

\begin{equation}
\begin{split}
\Delta^r (I(\mu_{n,r})+(d_0, \cdots, d_{r-1}))= I(\mu_{n,r})+(d_0, \cdots, d_{r-1}, \frac{\partial d_{r-1}}{\partial b_{r-1}}, \cdots, \frac{\partial d_{r-1}}{\partial b_0})
\end{split}
\end{equation}

Because $\delta (I(\mu_{n,r})+(d_0, \cdots, d_{r-1}))=\begin{pmatrix} \delta (I(\mu_{n,r})) \\ \left(\frac{\partial d_i}{\partial a_j}\right), \left(\frac{\partial d_i}{\partial b_j}\right) \end{pmatrix}= \begin{pmatrix}  B_{n+r} \begin{pmatrix} I_n \\ 0\cr \end{pmatrix} &A_{n+r} \begin{pmatrix} I_r \\ 0\cr \end{pmatrix}\\ \left(\frac{\partial d_i}{\partial a_j}\right)& \left(\frac{\partial d_i}{\partial b_j}\right) \end{pmatrix}$, left multiplying $\delta (I(\mu_{n,r})+(d_0, \cdots, d_{r-1}))$ by $\begin{pmatrix} I_n &0&0 \\ 0&I_r&0\\ -\left(\frac{\partial d_i}{\partial a_j}\right)&0&I_r \end{pmatrix} \begin{pmatrix} (B_{n+r})^{-1} &0 \\ 0&I_r\end{pmatrix}$ gives 

\begin{equation}
\begin{split}
\begin{pmatrix} I_n &D \\ 0&*\\ 0&-\left(\frac{\partial d_i}{\partial a_j}\right)D+ \left(\frac{\partial d_i}{\partial b_j}\right) \end{pmatrix}
\end{split}
\end{equation}

\noindent where elements in the position marked by \lq\lq *" can be generated by $d_0, \cdots, d_{r-1}$.

By Lemma $3.2$ (the case $s=1$), we can rewrite the above matrix as 

\begin{equation}
\begin{split}
\begin{pmatrix} I_n &D \\ 0&*\\ 0&2\left(\frac{\partial d_i}{\partial b_j}\right) \end{pmatrix}
\end{split}
\end{equation}

The $*$ part is given by $\begin{pmatrix} 0, I_r\end{pmatrix} (B_{n+r})^{-1} A_{n+r} \begin{pmatrix} I_r\\0\end{pmatrix}$, which evaluated at origin is $\begin{pmatrix} 0, I_r\end{pmatrix} $

\noindent $\begin{pmatrix} I_r\\0\end{pmatrix}=0$ because $q_1 \ge 2$. The same argument gives that $\left(\frac{\partial d_i}{\partial b_j}\right)=-\begin{pmatrix} 0, I_r\end{pmatrix} B^{-1}\widehat{D} \begin{pmatrix} I_r\\ 0 \end{pmatrix}$ evaluated at origin is also equal to $\begin{pmatrix} 0, I_r\end{pmatrix} \begin{pmatrix} I_r\\0\end{pmatrix}=0$. Therefor the corank of $\delta (I(\mu_{n,r})+(d_0, \cdots, d_{r-1}))$ evaluated at origin is $r$. 

By Lemma $3.3$, any element in the $r \times r$ matrix $\left(\frac{\partial d_i}{\partial b_j}\right)$ can be generated by $\frac{\partial d_{r-1}}{\partial b_j}, j=0, \cdots, r-1$ and $d_1, \cdots, d_{r-1}$. So any $(n+1) \times (n+1)$ minor of the matrix in $(3.12)$ can be generated by $\frac{\partial d_{r-1}}{\partial b_j}, j=0, \cdots, r-1$ and $d_0, \cdots, d_{r-1}$ because it must have at least one row whose elements are from $*$ or $2 \left(\frac{\partial d_i}{\partial b_j}\right)$. This implies that 

\begin{equation}
\begin{split}
\Delta^r (I(\mu_{n,r})+(d_0, \cdots, d_{r-1})) \subseteq I(\mu_{n,r})+(d_0, \cdots, d_{r-1}, \frac{\partial d_{r-1}}{\partial b_{r-1}}, \cdots, \frac{\partial d_{r-1}}{\partial b_0})
\end{split}
\end{equation}

For each $j=0, \cdots, r-1$, the $(n+1) \times (n+1)$ minor $\begin{pmatrix} I_n &\# \\ 0&2 \frac{\partial d_{r-1}}{\partial b_j} \end{pmatrix}$ has determinant $2 \frac{\partial d_{r-1}}{\partial b_j}$, so the $\subseteq$ in Equation $(3.13)$ is actually an equality. This proves Proposition $3.4$.

\end{proof}

Let $\psi_i=d_i$ for $i=0, \cdots, r-1$ and $\psi_{sr+i}=\frac{\partial^s d_{r-1}}{\partial^{s-1} b_0 \partial b_{r-1-i}}$ for $i=0, \cdots, r-1$ and $s=1, \cdots, q_1-1$. We have the following lemma.

\begin{lem}
$\begin{pmatrix} \frac{\partial \psi_{sr+i}}{\partial b_j}\end{pmatrix}=- (s+1) \begin{pmatrix} \frac{\partial \psi_{sr+i}}{\partial a_j} \end{pmatrix} D$ for $s=0, \cdots, q_1-1$. 
\end{lem}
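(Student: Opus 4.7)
My plan is to generalize the proof of Lemma 3.2 by applying one additional derivative $\partial/\partial b_{r-1-i}$ to the identities in Equation (3.3) before iterating $\partial/\partial b_0$. The base case $s=0$ is immediate: $\psi_i=d_i$ gives $(\partial \psi_i/\partial b_j)=-B^{-1}D=-(\partial \psi_i/\partial a_j)D$, matching the prefactor $-(s+1)=-1$.

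For $s\ge 1$ I will work with the matrix identities $(\partial d_i/\partial a_j)=B^{-1}$ and $(\partial d_i/\partial b_j)=-B^{-1}D=-B^{-2}A$ from Equations (3.4) and (3.6). The key structural facts, all resting on Lemma 3.1, are that $B$ commutes with every power of $L_n$, that $\partial B/\partial b_{r-1-i}=L_n^{i+1}$ and $\partial B/\partial b_0=L_n^r$, and that $A$ is independent of all $b_k$'s while $B^{-1}A=D$. Applying $\partial/\partial b_{r-1-i}$ first yields
\begin{equation*}
\frac{\partial}{\partial b_{r-1-i}}(B^{-1}) = -L_n^{i+1} B^{-2}, \qquad \frac{\partial}{\partial b_{r-1-i}}(-B^{-1}D) = 2\, L_n^{i+1} B^{-2} D,
\end{equation*}
the factor of $2$ coming from the two copies of $B^{-1}$ in $-B^{-2}A$ together with $A$ being free of $b$'s.

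Next I apply $\partial_0^{s-1}$ to each of these. Iterating $\partial_0 B^{-k}=-k B^{-k-1} L_n^r$ produces $\partial_0^{s-1}(B^{-2})=(-1)^{s-1} s!\, B^{-s-1} L_n^{r(s-1)}$ and $\partial_0^{s-1}(B^{-3})=(-1)^{s-1}\frac{(s+1)!}{2} B^{-s-2} L_n^{r(s-1)}$. Combining with $B^{-1}A=D$ and the commutativity of $L_n$ with $B$, I obtain
\begin{equation*}
\partial_0^{s-1}\frac{\partial}{\partial b_{r-1-i}}(B^{-1}) = (-1)^s s!\, L_n^{i+1+r(s-1)} B^{-s-1},
\end{equation*}
\begin{equation*}
\partial_0^{s-1}\frac{\partial}{\partial b_{r-1-i}}(-B^{-1}D) = (-1)^{s-1}(s+1)!\, L_n^{i+1+r(s-1)} B^{-s-1} D.
\end{equation*}

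Since $\psi_{sr+i}$ is a derivative of $d_{r-1}$, the $i$-th row of the matrix $(\partial \psi_{sr+i}/\partial a_j)$ (respectively $(\partial \psi_{sr+i}/\partial b_j)$) is the $(r-1)$-th row of the first (respectively second) display above. Writing $\Phi_i := L_n^{i+1+r(s-1)} B^{-s-1}$, these two rows are $(-1)^s s!\, e_{r-1}^T\Phi_i$ and $(-1)^{s-1}(s+1)!\, e_{r-1}^T\Phi_i D$. Right-multiplying the first by $D$ and comparing scalar prefactors gives precisely $(\partial\psi_{sr+i}/\partial b_j) = -(s+1)(\partial\psi_{sr+i}/\partial a_j)D$. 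The only obstacle I anticipate is the bookkeeping of commutativity among $B$, powers of $L_n$, and the operators $\partial/\partial b_k$, together with the factorial coefficients from repeatedly differentiating $B^{-k}$; no new structural ideas beyond those of Lemma 3.2 are needed.
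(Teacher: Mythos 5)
Your proof is correct, and it reaches the identity by a somewhat different route than the paper. The paper first invokes the Toeplitz symmetry of Lemma 3.3 to rewrite $\frac{\partial d_{r-1}}{\partial b_{r-1-i}}=\frac{\partial d_i}{\partial b_0}$, hence $\psi_{sr+i}=\frac{\partial^s d_i}{\partial b_0^s}$; it then differentiates the already-established identity of Lemma 3.2 once more in $b_0$, and the whole content of the proof is showing that the product-rule correction term $-s\bigl(\frac{\partial^s d_i}{\partial^{s-1}b_0\,\partial a_j}\bigr)\frac{\partial D}{\partial b_0}$ equals $-\bigl(\frac{\partial\psi_{sr+i}}{\partial a_j}\bigr)D$, via $\frac{\partial D}{\partial b_0}=-B^{-1}\frac{\partial B}{\partial b_0}D$ and Equation $(3.5)$. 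You instead keep the derivative $\partial/\partial b_{r-1-i}$ intact, use $\frac{\partial B}{\partial b_{r-1-i}}=L_n^{i+1}$, and derive closed forms for $\partial_0^{s-1}\partial_{b_{r-1-i}}(B^{-1})$ and $\partial_0^{s-1}\partial_{b_{r-1-i}}(-B^{-2}A)$ that generalize Equations $(3.5)$ and $(3.7)$ with one factor $L_n^r$ replaced by $L_n^{i+1}$; the lemma then follows by extracting the $d_{r-1}$-row and comparing the scalar prefactors $(-1)^s s!$ and $(-1)^{s-1}(s+1)!$. Your sign and factorial bookkeeping checks out (including the base case $s=1$, where the two displays reduce to $-L_n^{i+1}B^{-2}$ and $2L_n^{i+1}B^{-2}D$), and the commutativity you rely on is exactly Lemma 3.1. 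What your route buys is independence from Lemma 3.3 and from the product-rule manipulation; what it costs is redoing the iterated-derivative computation in slightly greater generality. Either argument is acceptable; the paper's version has the advantage of reusing Lemma 3.2 as a black box, which matters later when the same pattern of reasoning is repeated for the maps $\mu_{r_{i-1},r_i}$.
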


\begin{proof}

Equation $(3.3)$ implies that $\begin{pmatrix} \frac{\partial d_i}{\partial a_j} \end{pmatrix}=B^{-1}$ and  
$\begin{pmatrix} \frac{\partial d_i}{\partial b_j} \end{pmatrix}=-B^{-1}D$. So $\begin{pmatrix} \frac{\partial d_i}{\partial b_j} \end{pmatrix}=-\begin{pmatrix} \frac{\partial d_i}{\partial a_j} \end{pmatrix} D$. Lemma $3.5$ is true in the case $s=0$.

For $s \ge 1$, by Lemma $3.3$ we have that $\frac{\partial d_{r-1}}{\partial b_{r-1-i}}=\frac{\partial d_{i}}{\partial b_0}$, so $\psi_{sr+i}=\frac{\partial^s d_{r-1}}{\partial^{s-1} b_0 \partial b_{r-1-i}}=\frac{\partial^{s-1} }{\partial^{s-1} b_0} \left(\frac{\partial d_{r-1}}{\partial b_{r-1-i}}\right)=\frac{\partial^{s-1} }{\partial^{s-1} b_0} \left(\frac{\partial d_{i}}{\partial b_0}\right)=\frac{\partial^s d_{i}}{\partial^{s} b_0}$.

By Lemma $3.2$ we have that {\small $\begin{pmatrix} \frac{\partial \psi_{sr+i}}{\partial b_j}\end{pmatrix}=\begin{pmatrix} \frac{\partial}{\partial b_j} \frac{\partial^s d_{i}}{\partial^{s} b_0}\end{pmatrix}= \frac{\partial}{\partial b_0} \begin{pmatrix} \frac{\partial^s d_{i}}{\partial^{s-1} b_0 \partial b_j} \end{pmatrix}=\frac{\partial}{\partial b_0} \left(-s \frac{\partial^s d_{i}}{\partial^{s-1} b_0\partial a_j} D\right)$}

\noindent $=-s \begin{pmatrix} \frac{\partial}{\partial a_j}\frac{\partial^s d_{i}}{\partial^{s} b_0} \end{pmatrix} D-s \left(\frac{\partial^s d_{i}}{\partial^{s-1} b_0\partial a_j}\right) \frac{\partial D}{\partial b_0}$. Our lemma follows if we can show that $-s \left(\frac{\partial^s d_{i}}{\partial^{s-1} b_0\partial a_j}\right) $

\noindent $\frac{\partial D}{\partial b_0}=-\begin{pmatrix} \frac{\partial}{\partial a_j}\frac{\partial^s d_{i}}{\partial^{s} b_0} \end{pmatrix} D$. This can be done as follows.

From the equation $A=BD$, we have that $\frac{\partial B}{\partial b_0} D+B\frac{\partial D}{\partial b_0}=0$. So $\frac{\partial D}{\partial b_0}=-B^{-1} \frac{\partial B}{\partial b_0} D$. Applying Equation $(3.5)$ to both indices $s$ and $s+1$, we have that $-s \left(\frac{\partial^s d_{i}}{\partial^{s-1} b_0\partial a_j}\right) \frac{\partial D}{\partial b_0}=s \left(\frac{\partial^s d_{i}}{\partial^{s-1} b_0\partial a_j}\right) B^{-1} \frac{\partial B}{\partial b_0} D=(-1)^{s-1} s! B^{-(s+1)} \left(\frac{\partial B}{\partial b_0}\right)^s D=-\begin{pmatrix} \frac{\partial}{\partial a_j}\frac{\partial^s d_{i}}{\partial^{s} b_0} \end{pmatrix} D$. This completes the proof of Lemma $3.5$.

\end{proof}

\begin{thm}
$\overset{s}{\overbrace{\Delta^r \cdots \Delta^r}} I(\mu_{n,r})= I(\mu_{n,r})+(\psi_0, \cdots, \psi_{sr-1})$ for $s=1, \cdots, q_1$.
\end{thm}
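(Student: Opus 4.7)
I plan to prove Theorem 3.6 by induction on $s$. The base case $s = 1$ is immediate from Proposition 2.1, which gives $\Delta^r I(\mu_{n,r}) = I(\mu_{n,r}) + (d_0, \ldots, d_{r-1}) = I(\mu_{n,r}) + (\psi_0, \ldots, \psi_{r-1})$.

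For the inductive step, suppose the statement holds for some $s$ with $1 \le s \le q_1 - 1$ and set $J_s := I(\mu_{n,r}) + (\psi_0, \ldots, \psi_{sr-1})$. First I would apply the row reduction of Proposition 3.4 to $\delta J_s$: multiply the $\delta I(\mu_{n,r})$ block on the left by $(B_{n+r})^{-1}$ to obtain a top part $\begin{pmatrix} I_n & D \\ 0 & * \end{pmatrix}$, then use the top-left $I_n$ to eliminate the bottom-left $\left(\frac{\partial \psi_k}{\partial a_j}\right)$ blocks. By Lemma 3.5, the resulting bottom-right block in each row $k = tr+i$ (with $0 \le t \le s-1$, $0 \le i \le r-1$) simplifies to
$$M_{tr+i} := -\left(\frac{\partial \psi_{tr+i}}{\partial a_j}\right) D + \left(\frac{\partial \psi_{tr+i}}{\partial b_j}\right) = \frac{t+2}{t+1}\left(\frac{\partial \psi_{tr+i}}{\partial b_j}\right).$$

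Next I would verify corank $r$ at the origin. Using Equations (3.5) and (3.7) together with $D|_0 = \begin{pmatrix} I_r \\ 0 \end{pmatrix}$, and recognizing via the identity $\psi_{tr+i} = \partial^t d_i/\partial b_0^t$ that the row for $\psi_{tr+i}$ corresponds to row $n-1-i$ of the ambient $n\times n$ Toeplitz matrix, a direct computation gives that $\left(\frac{\partial \psi_{tr+i}}{\partial a_j}\right)|_0 \cdot D|_0$ equals the $((n-1-i)-rt)$-th row of $D|_0$, which vanishes whenever $(n-1-i)-rt \ge r$. For $t \le s-1 \le q_1 - 2$ this quantity is at least $r + r_1 \ge r$, so $M_{tr+i}|_0 = 0$; combined with the already-known vanishing of $*|_0$ from Proposition 3.4, the corank at the origin is exactly $r$. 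I would then analyze the $(n+1)\times(n+1)$ minors: each one is, up to sign, a single entry of one of the blocks $*$ or $M_{tr+i}$. Entries of $*$ and of $M_{tr+i}$ for $t \le s-2$ lie in $J_s$ by an inductive Toeplitz argument generalizing Lemma 3.3 (using the factorization $-B^{-(t+1)} L_n^{rt} \widehat{D}$ obtained from Equation (3.7) and the commutativity in Lemma 3.1). For $t = s-1$, the new block $M_{(s-1)r+i} \propto \left(\frac{\partial \psi_{(s-1)r+i}}{\partial b_j}\right)$ has entries that, by the same generalized Lemma 3.3, are generated modulo $J_s$ by its last row $\left(\frac{\partial \psi_{(s-1)r+(r-1)}}{\partial b_j}\right)_{j=0,\ldots,r-1}$. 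By the definition of the $\psi$'s and the Toeplitz symmetry underlying Lemma 3.3, these entries are exactly $\psi_{sr+i} = \partial \psi_{(s-1)r+(r-1)}/\partial b_{r-1-i}$ for $i = 0, \ldots, r-1$, which closes the induction.

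The main obstacle is the inductive generalization of Lemma 3.3: showing that every $r\times r$ block $\left(\frac{\partial \psi_{tr+i}}{\partial b_j}\right)_{0 \le i,j \le r-1}$ is generated modulo $J_s$ by its last row, and that the blocks for $t \le s-2$ have all entries in $J_s$. This requires extracting the lower Toeplitz structure of $B^{-(t+1)} L_n^{rt} \widehat{D}$ from Equation (3.7) and then tracking, by a secondary induction, how Toeplitz symmetry forces the off-diagonal entries to be expressible in terms of lower-order $\psi_k$'s and $d_k$'s already present in $J_s$. This propagation-of-symmetry argument, which ties together the identities of Lemmas 3.1, 3.2, 3.3, and 3.5 at all previous derivative orders, is where the bulk of the technical work lies.
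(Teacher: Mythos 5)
Your proposal follows essentially the same route as the paper's proof: induction on $s$ with base case Proposition 2.1, the same row reduction and elimination of the $\bigl(\frac{\partial \psi_k}{\partial a_j}\bigr)$ blocks, Lemma 3.5 to rewrite the surviving blocks as nonzero multiples of $\bigl(\frac{\partial \psi_{tr+i}}{\partial b_j}\bigr)$, the vanishing at the origin via Equations $(3.5)$ and $(3.7)$ and the condition $t\le q_1-2$, and a Toeplitz-symmetry generation argument (the paper's version of your ``generalized Lemma 3.3,'' obtained by differentiating the previous stage's generation relation with respect to $b_0$) identifying the new generators as $\psi_{sr+i}=\partial\psi_{(s-1)r+(r-1)}/\partial b_{r-1-i}$. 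The plan is correct and matches the paper's argument in both structure and detail.
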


\begin{proof}
The case $s=1$ has been proved in Proposition $2.1$. If $q_1=1$, we are done. So we may assume that $q_1 \ge 2$.

Suppose that Theorem $3.6$ is true for $s=1, \cdots, p$. By Proposition $2.1$ and Proposition $3.4$, we may assume $p \ge 2$. If $p=q_1$, we are done. Otherwise we may assume that $p \le q_1-1$.

By the inductive assumption, we have that

\begin{equation}
\begin{split}
\overset{p}{\overbrace{\Delta^r \cdots \Delta^r}} I(\mu_{n,r})= I(\mu_{n,r})+(\psi_0, \cdots, \psi_{pr-1})
\end{split}
\end{equation}

We need to prove that the corank of $\delta (I(\mu_{n,r})+(\psi_0, \cdots, \psi_{pr-1}))$ evaluated at origin is $r$ and

\begin{equation}
\begin{split}
\Delta^r (\overset{p}{\overbrace{\Delta^r \cdots \Delta^r}} I(\mu_{n,r}))= \Delta^r(I(\mu_{n,r})+(\psi_0, \cdots, \psi_{pr-1}))=I(\mu_{n,r})+(\psi_0, \cdots, \psi_{(p+1)r-1})
\end{split}
\end{equation}

It is easy to see that $\delta (I(\mu_{n,r})+(\psi_0, \cdots, \psi_{pr-1}))=\begin{pmatrix} \delta (I(\mu_{n,r})) \\ \left(\frac{\partial \psi_{sr+i}}{\partial a_j}, \frac{\partial \psi_{sr+i}}{\partial b_j}\right) \end{pmatrix}$, where $s$ varies from $0$ to $p-1$ and $i$ from $0$ to $r-1$ respectively. Left multiplying $\delta (I(\mu_{n,r})+(\psi_0, \cdots, \psi_{pr-1}))$ by $\begin{pmatrix} \begin{pmatrix} I_n &0 \\0&I_r \end{pmatrix} &0&\cdots&0 \\ -\left(\frac{\partial \psi_{i}}{\partial a_j}\right)&I_r&\cdots&0\\\vdots&\vdots&\ddots &\vdots\\-\frac{\partial \psi_{(p-1)r+i}}{\partial a_j}&0&\cdots&I_r\end{pmatrix} \begin{pmatrix} (B_{n+r})^{-1} &0&\cdots&0 \\ 0&I_r&\cdots&\vdots\\\vdots&\ddots&\ddots &0\\0&\cdots&0&I_r\end{pmatrix}$, we get the following matrix

$$\begin{pmatrix} I_n &D \\0&*\\ 0&-\left(\frac{\partial \psi_{sr+i}}{\partial a_j}\right)D+ \left(\frac{\partial \psi_{sr+i}}{\partial b_j}\right) \end{pmatrix}$$

\noindent where elements in the position marked by \lq\lq *" can be generated by $d_0, \cdots, d_{r-1}$ and $s$ varies from $0$ to $p-1$.

By Lemma $3.5$ and induction assumption, it is equal to 

\begin{equation}
\begin{split}
\begin{pmatrix} I_n &D \\ 0&*\\ 0&(\frac{1}{p}+1)\left(\frac{\partial \psi_{(p-1)r+i}}{\partial b_j}\right) \end{pmatrix}
\end{split}
\end{equation}

\noindent where elements in the position marked by \lq\lq *" can be generated by $\psi_0, \cdots, \psi_{pr-1}$.

By induction assumption, the corank of $\begin{pmatrix} I_n &D \\ 0&*\end{pmatrix}$ evaluated at origin is $r$. To show that the corank of $\delta (I(\mu_{n,r})+(\psi_0, \cdots, \psi_{pr-1}))$ evaluated at origin is $r$, it is sufficient to prove $\left(\frac{\partial \psi_{(p-1)r+i}}{\partial b_j}\right)$ is zero when evaluated at origin.

The matrix {\small $\left(\frac{\partial \psi_{(p-1)r+i}}{\partial b_j}\right)=\left(\frac{\partial }{\partial b_j} (\frac{\partial^{p-1} d_{r-1}}{\partial^{p-2} b_0 \partial b_{r-1-i}}) \right)=\left(\frac{\partial }{\partial b_j} (\frac{\partial^{p-2}}{ \partial^{p-2} b_0} \frac{\partial d_{r-1}}{\partial b_{r-1-i}}) \right)=\left( \frac{\partial }{\partial b_j} (\frac{\partial^{p-2}}{ \partial^{p-2} b_0} \frac{\partial d_{i}}{\partial b_{0}})\right)$

\noindent $=\left(\frac{\partial^{p-1}}{ \partial^{p-1} b_0}(\frac{\partial d_i}{\partial b_j}) \right)$}. By Equation $(3.6)$, we have that $\left(\frac{\partial^{p-1}}{ \partial^{p-1} b_0}(\frac{\partial d_i}{\partial b_j}) \right)=-\begin{pmatrix} 0,I_r\end{pmatrix}\left(\frac{\partial^{p-1}}{ \partial^{p-1} b_0} (B^{-2}) \right)A $

\noindent $=(-1)^p p! \begin{pmatrix} 0,I_r\end{pmatrix} B^{-p-1} (\frac{\partial B}{\partial b_0})^{p-1} A=(-1)^p p! \begin{pmatrix} 0,I_r\end{pmatrix} B^{-p-1} L_n^{(p-1)r} A$, which evaluated at origin is zero because $B^{-p-1} L_n^{(p-1)r} A$ evaluated at origin has the form $\begin{pmatrix} 0\\I_r\\0\end{pmatrix}$ and the bottom $0$ consists of $(q_1-p)r \ge r$ rows.

By the definition $\frac{\partial \psi_{(p-1)r+i}}{\partial b_j}=\frac{\partial }{\partial b_j} (\frac{\partial^{p-1} d_{r-1}}{\partial^{p-2} b_0 \partial b_{r-1-i}}) =\frac{\partial }{\partial b_j} (\frac{\partial^{p-2}}{ \partial^{p-2} b_0} \frac{\partial d_{r-1}}{\partial b_{r-1-i}}) =\frac{\partial }{\partial b_j} (\frac{\partial^{p-2}}{ \partial^{p-2} b_0} \frac{\partial d_{i}}{\partial b_{0}})=\frac{\partial}{\partial b_0}\frac{\partial^{p-2}}{ \partial^{p-2} b_0}(\frac{\partial d_i}{\partial b_j})=\frac{\partial}{\partial b_0} \frac{\partial \psi_{(p-2)r+i}}{\partial b_j}$. By induction assumption $\frac{\partial \psi_{(p-2)r+i}}{\partial b_j}$ can be generated by $\psi_0, \cdots, \psi_{(p-1)r+r-1}$ and $\frac{\partial \psi_{(p-1)r+i}}{\partial b_0} $ for $i=0, \cdots, r-1$. By the definition $\frac{\partial \psi_{(p-1)r+i}}{\partial b_0} =\frac{\partial}{\partial b_0} (\frac{\partial^{p-1} d_{r-1}}{\partial^{p-2} b_0 \partial b_{r-1-i}})=\frac{\partial^{p} d_{r-1}}{\partial^{p-1} b_0 \partial b_{r-1-i}}=\psi_{pr+i}$. Therefore any $(n+1) \times (n+1)$ minor of matrix $(3.16)$ and hence $\delta (I(\mu_{n,r})+(\psi_0, \cdots, \psi_{pr-1}))$ can be generated by $(\psi_0, \cdots, \psi_{(p+1)r-1})$, this implies that $\Delta^r(I(\mu_{n,r})+(\psi_0, \cdots, \psi_{pr-1})) \subseteq I(\mu_{n,r})+(\psi_0, \cdots, \psi_{(p+1)r-1})$. Actually the inequality is an equality because each $\psi_{pr+i}$ is only different from a $(n+1) \times (n+1)$ minor of $\delta (I(\mu_{n,r})+(\psi_0, \cdots, \psi_{pr-1}))$ by a nonzero constant. Theorem $3.6$ follows.
\end{proof}

As an easy corollary of Theorem $3.6$ and its proof, we have

\begin{cor}
The first $q_1$ entries in $TB(I(\mu_{n,r}))$ are $(r, \cdots, r)$.
\end{cor}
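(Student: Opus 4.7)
The plan is to read off Corollary 3.7 almost directly from Theorem 3.6 together with the corank calculations already embedded inside its proof. By the definition of the Thom--Boardman symbol, showing that the first $q_1$ entries all equal $r$ amounts to verifying, for each $s = 0, 1, \ldots, q_1-1$, that
$$\corank\bigl(\,\overset{s}{\overbrace{\Delta^r \cdots \Delta^r}}\, I(\mu_{n,r})\bigr) = r.$$
The base case $s=0$ is already handled at the end of Section 2: the first Jacobian $\delta I(\mu_{n,r})$ is the Sylvester matrix of $f$ and $g$, which has rank $n$ at the origin, giving corank $r$.

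For the remaining values $1 \le s \le q_1 - 1$, I would invoke Theorem 3.6 to rewrite the $s$-fold critical extension as $I(\mu_{n,r}) + (\psi_0, \ldots, \psi_{sr-1})$, and then reuse the block-form reduction performed inside the proof of Theorem 3.6. That reduction brings the Jacobian (after left-multiplication by the appropriate block matrix built from $(B_{n+r})^{-1}$ and the $-(\partial\psi_{tr+i}/\partial a_j)$ blocks) into the shape
$$\begin{pmatrix} I_n & D \\ 0 & * \\ 0 & c_s\bigl(\partial \psi_{(s-1)r+i}/\partial b_j\bigr) \end{pmatrix}$$
with a nonzero constant $c_s = 1 + 1/s$. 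The proof of Theorem 3.6 shows that both the $*$-block, which is generated by $d_0, \ldots, d_{r-1}$, and the bottom block vanish at the origin precisely under the hypothesis $s \le q_1 - 1$: via Lemma 3.5 and Equation $(3.6)$, the vanishing of the bottom block reduces to the fact that $B^{-s-1} L_n^{(s-1)r} A$ is zero at the origin because $(q_1 - s)r \ge r$ leaves enough zero rows at the bottom. This forces the Jacobian at the origin to have rank exactly $n$, so its corank is $r$, and hence $i_{s+1} = r$.

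The only real obstacle is bookkeeping, not computation: one has to carefully align the induction index $p$ used inside the proof of Theorem 3.6 with the Thom--Boardman index used here, ensuring that the corank verification performed at level $p$ (valid when $p \le q_1 - 1$) supplies precisely $i_{p+1} = r$, and separately that Proposition 3.4 takes care of the $s = 1$ step while the inductive part of Theorem 3.6 covers $s = 2, \ldots, q_1-1$. Beyond this index matching, no new calculation is required; Corollary 3.7 is essentially a direct repackaging of the intermediate corank assertions that drove the induction in Theorem 3.6.
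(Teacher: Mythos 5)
Your proposal is correct and follows exactly the route the paper intends: the paper offers no separate argument for Corollary 3.7, deriving it directly from Theorem 3.6 together with the corank-at-the-origin computations embedded in that theorem's proof (the Sylvester matrix for $s=0$, and the vanishing of the $*$-block and of $c_s\bigl(\partial\psi_{(s-1)r+i}/\partial b_j\bigr)$ at the origin for $1\le s\le q_1-1$). Your bookkeeping of the constant $1+1/s$ and of the condition $(q_1-s)r\ge r$ matches the paper's, so this is essentially the same proof, written out.
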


Our next goal is to prove that

\begin{prop}
The $(q_1+1)$-th entry in $TB(I(\mu_{n,r}))$ is $r_1$.
\end{prop}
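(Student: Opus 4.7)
The plan is to carry out, at $p = q_1$, exactly the row reductions used in the inductive step of the proof of Theorem 3.6. For $p \leq q_1 - 1$ the block $\left(\frac{\partial \psi_{(p-1)r+i}}{\partial b_j}\right)$ vanished at the origin because the shift $L_n^{(p-1)r}$ left at least $(q_1 - p)r + r_1 \geq r$ zero rows at the bottom of the relevant $n \times r$ matrix. At $p = q_1$ the shift is $(q_1 - 1)r$ and the tail of zero rows shrinks to exactly $r_1$, producing a rank-$(r - r_1)$ block. This rank deficit is precisely what yields $i_{q_1 + 1} = r_1$.

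Concretely, I would first left-multiply $\delta(I(\mu_{n,r}) + (\psi_0, \ldots, \psi_{q_1 r - 1}))$ by the same combination of $(B_{n+r})^{-1}$ and elementary row operations used in the proof of Theorem 3.6. Lemma 3.5 applies to every $s = 0, 1, \ldots, q_1 - 1$, so the reduced Jacobian takes the form
\begin{equation*}
\begin{pmatrix} I_n & D \\ 0 & * \\ 0 & \left(1 + \tfrac{1}{q_1}\right) \left(\frac{\partial \psi_{(q_1-1)r+i}}{\partial b_j}\right) \end{pmatrix},
\end{equation*}
where each entry of $*$ is generated by $\psi_0, \ldots, \psi_{q_1 r - 1}$ and hence vanishes at the origin.

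Second, I would compute the surviving block at the origin. Using the identification $\psi_{(q_1-1)r+i} = \partial^{q_1-1} d_i / \partial^{q_1-1} b_0$ (obtained as in the proof of Lemma 3.5) together with Equation (3.7) at $s = q_1$ and the relation $A = BD$, one obtains
\begin{equation*}
\left(\frac{\partial \psi_{(q_1-1)r+i}}{\partial b_j}\right) = (-1)^{q_1} q_1! \, (0, I_r) \, B^{-q_1 - 1} L_n^{(q_1 - 1)r} A.
\end{equation*}
Since $B|_0 = I_n$ and $A|_0 = \begin{pmatrix} I_r \\ 0 \end{pmatrix}$, at the origin this is a nonzero scalar multiple of $(0, I_r) L_n^{(q_1 - 1)r} \begin{pmatrix} I_r \\ 0 \end{pmatrix}$. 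The matrix $L_n^{(q_1-1)r} \begin{pmatrix} I_r \\ 0 \end{pmatrix}$ carries a single $1$ in position $((q_1-1)r + i, \, i)$ for $i = 1, \ldots, r$, and extracting the bottom $r$ rows retains only those $i$ with $(q_1 - 1)r + i \geq n - r + 1 = q_1 r + r_1 - r + 1$, i.e., $i \geq r_1 + 1$. The result is the $r \times r$ matrix with $1$'s at positions $(i - r_1,\, i)$ for $i = r_1 + 1, \ldots, r$ and zeros elsewhere, whose rank is therefore $r - r_1$.

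Combining, the reduced Jacobian at the origin is block upper triangular with $I_n$ in the upper-left corner and a lower-right $(r + q_1 r) \times r$ block of rank $r - r_1$, so its total rank is $n + (r - r_1)$ and its corank is $r_1$. Hence $i_{q_1 + 1} = r_1$. The main obstacle is the combinatorial bookkeeping of the third step — correctly tracking how the shift $L_n^{(q_1 - 1)r}$, sandwiched between the projections $(0, I_r)$ on the left and $\begin{pmatrix} I_r \\ 0 \end{pmatrix}$ on the right, produces precisely a rank-$(r - r_1)$ block — but this is a direct matrix calculation and, once done, the rest of the argument is a word-for-word reuse of the proof of Theorem 3.6.
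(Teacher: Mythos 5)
Your proposal is correct and follows essentially the same route as the paper: reduce via the row operations from the proof of Theorem 3.6 to the single surviving block $\bigl(\frac{\partial \psi_{(q_1-1)r+i}}{\partial b_j}\bigr)$, express it through Equation (3.7), and evaluate at the origin to find that $(0, I_r)\,L_n^{(q_1-1)r}\begin{pmatrix} I_r\\ 0\end{pmatrix}$ has rank $r-r_1$. The only cosmetic difference is that you treat $q_1=1$ and $q_1>1$ uniformly and track individual unit entries, whereas the paper separates the two cases and argues with block partitions of $L_n^{(q_1-1)r}$.
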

\begin{proof}
By Theorem $3.6$, it is sufficient to prove that the rank of $\delta(I(\mu_{n,r})+(\psi_0, \cdots, \psi_{q_1r-1}))$ evaluated at origin is $n+r-r_1$. By the proof of Theorem $3.6$, we only need to prove that the rank of $\begin{pmatrix}\frac{\partial \psi_{(q_1-1)r+i}}{\partial b_j}\end{pmatrix}$ evaluated at origin is $r-r_1$, where $i=0, \cdots, r-1$ and $j=0, \cdots, r-1$.

When $q_1=1$, we have that $\begin{pmatrix}\frac{\partial \psi_{(q_1-1)r+i}}{\partial b_j}\end{pmatrix}= \begin{pmatrix}\frac{\partial d_i}{\partial b_j}\end{pmatrix}$. By Equation $(3.6)$ and that $B^{-1}$ is equal to $I_n$ when evaluated at origin, we have that $\begin{pmatrix}\frac{\partial d_i}{\partial b_j}\end{pmatrix}$ evaluated at origin has the same rank as that of $-\begin{pmatrix}0&I_r\end{pmatrix} \begin{pmatrix} I_r\\0 \end{pmatrix}$, the latter one has rank $r-r_1$ because $n=r_1+r$ and $\begin{pmatrix}0&I_r\end{pmatrix} \begin{pmatrix} I_r\\0 \end{pmatrix}$ represents the first $r$ columns in the $r \times n$ matrix $\begin{pmatrix}0&I_r\end{pmatrix}$.

When $q_1>1$, we have that $\begin{pmatrix}\frac{\partial \psi_{(q_1-1)r+i}}{\partial b_j}\end{pmatrix}=\begin{pmatrix} \frac{\partial}{\partial b_j} \frac{\partial^{q_1-1} d_{r-1}}{\partial^{q_1-2} b_0 \partial b_{r-1-i}}\end{pmatrix}= \begin{pmatrix} \frac{\partial}{\partial b_j} \frac{\partial^{q_1-2} }{\partial^{q_1-2} b_0 } \frac{\partial d_{r-1}}{\partial b_{r-1-i}}\end{pmatrix}$

\noindent $=\begin{pmatrix} \frac{\partial}{\partial b_j} \frac{\partial^{q_1-2} }{\partial^{q_1-2} b_0 } \frac{\partial d_{i}}{\partial b_{0}}\end{pmatrix}=\begin{pmatrix}\frac{\partial^{q_1} d_i}{\partial^{q_1-1} b_0 \partial b_j}\end{pmatrix}$. By Equation $(3.7)$, we have that $\begin{pmatrix}\frac{\partial^{q_1} d_i}{\partial^{q_1-1} b_0 \partial b_j}\end{pmatrix}= \begin{pmatrix}0&I_r\end{pmatrix}  $

\noindent $(-1)^{q_1} q_1! B^{-q_1} \left(\frac{\partial B}{\partial b_0} \right)^{q_1-1} D$, which evaluated at origin has the same rank as $\begin{pmatrix}0&0&I_r\end{pmatrix}$

\noindent $L_n^{(q_1-1)r}\begin{pmatrix} I_r\\0\\0\end{pmatrix}$, where the first zero in the $r \times n$ matrix $\begin{pmatrix}0&0&I_r\end{pmatrix}$ represents the first $(q_1-1)r$ columns and the second zero represents the next $r_1$ columns. Because $L_n^{(q_1-1)r}= \begin{pmatrix} 0&0&0\\I_{r_1}&0&0\\0&I_r&0\end{pmatrix}$, so $\begin{pmatrix}0&0&I_r\end{pmatrix} L_n^{(q_1-1)r}= \begin{pmatrix}0&I_r&0\end{pmatrix}$, where the first zero in $\begin{pmatrix}0&I_r&0\end{pmatrix}$ occupies the first $r_1$ columns and the second one occupies the last $(q_1-1)r$ columns. It is easy to see that $\begin{pmatrix}0&0&I_r\end{pmatrix} L_n^{(q_1-1)r} \begin{pmatrix} I_r\\0\\0\end{pmatrix}=\begin{pmatrix}0&I_r&0\end{pmatrix} \begin{pmatrix} I_r\\0\\0\end{pmatrix}$ has rank $r-r_1$.
\end{proof}

In order to obtain the $(q_1+1)$-th critical extension $\Delta^{r_1}(\overset{q_1}{\overbrace{\Delta^r \cdots \Delta^r}} I(\mu_{n,r}))$, we need a key lemma.

Denote $B^{-q_1}\widehat{D}=I_n+\alpha_{n-1} L_n+\cdots+\alpha_1 L_n^{n-1}$. We have that

$\begin{pmatrix}0&I_r\end{pmatrix} L_n^{(q_1-1)r} B^{-q_1}\widehat{D} \begin{pmatrix} I_r\\ 0 \end{pmatrix}=\begin{pmatrix} \alpha_{q_1r} &\cdots & \alpha_{n-1}& 1& \cdots&0\\\vdots&\cdots&\cdots&\alpha_{n-1} &\ddots& \vdots \\ \vdots&\cdots&\cdots&\vdots & \cdots & 1 \\ \vdots & \vdots & \vdots \\ \alpha_{(q_1-1)r+1}&\cdots&\alpha_{n-r}&\alpha_{n-r+1} & \cdots & \alpha_{q_1r}\\\end{pmatrix}$

\medskip

It is easy to see that

$\begin{pmatrix}0&I_r\end{pmatrix} L_n^{(q_1-1)r} B^{-q_1}\widehat{D} \begin{pmatrix} I_r\\ 0 \end{pmatrix} \begin{pmatrix} 0\\ I_{r-r_1} \end{pmatrix}=(I_r+\alpha_{n-1}L_r+\cdots+\alpha_{n-r+1} L_r^{r-1})\begin{pmatrix} I_{r-r_1}\\ 0 \end{pmatrix}$ 

\medskip

The matrix $(I_r+\alpha_{n-1}L_r+\cdots+\alpha_{n-r+1} L_r^{r-1})^{-1} \begin{pmatrix} 0, I_r \end{pmatrix} L_n^{(q_1-1)r} B^{-q_1}\widehat{D} \begin{pmatrix} I_r\\ 0 \end{pmatrix}$ has the form

$$\begin{pmatrix} * &I_{r-r_1}\\ K & 0 \end{pmatrix}$$

\noindent where $K=\begin{pmatrix} 0, I_{r_1} \end{pmatrix} (I_r+\alpha_{n-1}L_r+\cdots+\alpha_{n-r+1} L_r^{r-1})^{-1} \begin{pmatrix} 0, I_r \end{pmatrix} L_n^{(q_1-1)r} B^{-q_1}\widehat{D} \begin{pmatrix} I_r\\ 0 \end{pmatrix} \begin{pmatrix} I_{r_1}\\ 0 \end{pmatrix}$ is a $r_1 \times r_1$ matrix. 

We have the following lemma.

\begin{lem} 
The elements in the first row of $K$ and $\psi_0, \cdots, \psi_{q_1r-1}$ generate all elements in $K$.
\end{lem}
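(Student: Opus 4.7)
The plan is to parallel the proof of Lemma~3.3, now applied to the more elaborate matrix $K$. First I would write $[K]_{i'',j''}$ explicitly in terms of Toeplitz coefficients. With $B^{-q_1}\widehat{D}=I_n+\sum_{m=1}^{n-1}\alpha^{(q_1)}_{n-m}L_n^m$ (setting $\alpha^{(q_1)}_n=1$) and $T^{-1}=I_r+\sum_{k=1}^{r-1}\gamma_k L_r^k$, where each $\gamma_k$ is a polynomial in $\alpha^{(q_1)}_{n-1},\ldots,\alpha^{(q_1)}_{n-r+1}$ by Lemma~3.1, a direct computation gives
\[
[K]_{i'',j''}=\sum_{k=0}^{r-r_1+i''-1}\gamma_k\,\alpha^{(q_1)}_{(q_1-1)r+r_1-i''+j''+k},
\]
and subtracting along diagonals yields the shift recurrence
\[
[K]_{i''+1,\,j''+1}=[K]_{i'',j''}+\gamma_{r-r_1+i''}\,\alpha^{(q_1)}_{q_1 r+j''},\qquad 1\le i'',j''\le r_1-1.
\]

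Iterating this recurrence, every $[K]_{i'',j''}$ with $i''\le j''$ reduces to the first-row entry $[K]_{1,\,j''-i''+1}$ plus a sum of corrections $\gamma_a\,\alpha^{(q_1)}_{q_1 r+b}$, and every $[K]_{i'',j''}$ with $i''>j''$ reduces to a first-column entry $[K]_{i''-j''+1,\,1}$ plus similar corrections. The matrix identity $\frac{\partial^{q_1}\widehat{D}}{\partial b_0^{q_1}}=(-1)^{q_1}q_1!\,B^{-q_1}L_n^{q_1 r}\widehat{D}$, obtained by iterating $\frac{\partial B}{\partial b_0}=L_n^r$ exactly as in the proof of Lemma~3.5, identifies $\alpha^{(q_1)}_{q_1 r+b}=\frac{(-1)^{q_1}}{q_1!}\psi_{q_1 r+b}$ for $b=0,\ldots,r_1-1$. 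Isolating the $k=r-r_1$ term of the first-row formula shows that $[K]_{1,\,b+1}$ equals $\gamma_{r-r_1}\alpha^{(q_1)}_{q_1 r+b}$ plus a tail of $\gamma_k\alpha^{(q_1)}_m$'s with $m<q_1 r+b$, so once the lower-indexed $\alpha^{(q_1)}_m$'s are absorbed, every correction lands in the target ideal $I:=(\text{first row of }K,\,\psi_0,\ldots,\psi_{q_1 r-1})$.

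To control these remaining indices (and thereby the first-column entries arising in the lower-triangular case) I would exploit the tautology $\widehat{D}=B^{q_1}\cdot(B^{-q_1}\widehat{D})$. Equating coefficients of $L_n^{n-m}$ yields
\[
d_m=\alpha^{(q_1)}_m+\sum_{i=1}^{n-m}[B^{q_1}]_i\,\alpha^{(q_1)}_{m+i},
\]
and since $d_m=\psi_m$ lies in our generating set for $m\in[0,r-1]$ while each $[B^{q_1}]_i$ with $i\ge 1$ is a polynomial in $b_0,\ldots,b_{r-1}$ with no constant term, a downward induction on $m$---using at each stage the first-row reductions above to handle the $\alpha^{(q_1)}_{m+i}$ with $m+i\ge q_1 r$---expresses every $\alpha^{(q_1)}_m$ with $m\in[1,q_1 r-1]$ as an $\CA$-linear combination of old $\psi$'s, first-row entries of $K$, and maximal-ideal multiples of higher-indexed $\alpha^{(q_1)}_{m+i}$'s. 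Cascading these substitutions back into the explicit formula for $[K]_{i'',j''}$ lands every entry of $K$ inside $I$.

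The hard part will be the combinatorial bookkeeping in this downward induction: each substitution of $\alpha^{(q_1)}_m$ introduces strictly higher-indexed $\alpha^{(q_1)}_{m+i}$ multiplied by elements of the maximal ideal, and one must verify that the cascade terminates with everything absorbed by the new $\psi_{q_1 r},\ldots,\psi_{q_1 r+r_1-1}$ already captured via the first row of $K$. Structurally this parallels, and generalizes, the downward induction in Lemma~3.3 showing that $t_1,\ldots,t_{r-1}$ are generated by $t_r,\ldots,t_{2r-1}$ and $d_1,\ldots,d_{r-1}$.
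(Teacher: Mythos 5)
Your overall strategy---work directly with the Toeplitz coefficients $\alpha^{(q_1)}_m$ of $B^{-q_1}\widehat{D}$, reduce every entry of $K$ to the top-range coefficients $\alpha^{(q_1)}_{q_1r},\ldots,\alpha^{(q_1)}_{n-1}$ by a shift recurrence, and then control the lower-indexed $\alpha^{(q_1)}_m$ by a downward induction coming from a product identity---is viable in outline, but the product identity you chose does not do the job. From $\widehat{D}=B^{q_1}\cdot(B^{-q_1}\widehat{D})$ you correctly get $d_m=\alpha^{(q_1)}_m+\sum_{i\ge 1}[B^{q_1}]_i\,\alpha^{(q_1)}_{m+i}$, and your induction then treats $d_m$ as available in the target ideal. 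But $d_m$ belongs to the generating set only for $m\le r-1$ (these are $\psi_0,\ldots,\psi_{r-1}$); for $m$ in the range $[r,\,q_1r-1]$, which is nonempty precisely when $q_1\ge 2$ and is exactly the range your cascade must traverse to reach the indices $(q_1-1)r+1,\ldots,q_1r-1$ occurring in the lower rows of $K$, the element $d_m$ is not among $\psi_0,\ldots,\psi_{q_1r-1}$ and in general does not lie in the ideal they generate. So already the first step of the downward induction, $\alpha^{(q_1)}_{q_1r-1}=d_{q_1r-1}-\sum_i[B^{q_1}]_i\alpha^{(q_1)}_{q_1r-1+i}$, fails to place $\alpha^{(q_1)}_{q_1r-1}$ in the target ideal when $q_1\ge 2$. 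The same defect undermines your identification of $\alpha^{(q_1)}_{q_1r+b}$ with first-row entries of $K$, since that identification also relies on absorbing the lower-indexed $\alpha^{(q_1)}_m$'s.

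The repair is to use one fewer power of $B$: from $B^{-(q_1-1)}\widehat{D}=B\cdot(B^{-q_1}\widehat{D})$ one gets $\gamma_m=\alpha^{(q_1)}_m+\sum_{i=1}^{r}b_{r-i}\,\alpha^{(q_1)}_{m+i}$, where the $\gamma_m$ are the coefficients of $B^{-(q_1-1)}\widehat{D}$; by Equation $(3.7)$ these satisfy $\gamma_m\propto\psi_m$ for $(q_1-1)r\le m\le q_1r-1$ (and the still lower ones lie in $(\psi_0,\ldots,\psi_{q_1r-1})$ by Lemma $3.11$), so the downward induction closes. This is in substance what the paper does, except that it runs the argument on the inverse side: it rewrites $K$, up to the unipotent factor $\Phi$ and a block extraction, as a Hankel block in the coefficients $\beta$ of $(B^{-q_1}\widehat{D})^{-1}$, and extracts the recurrence $(3.18)$ among the $\beta$'s from the identity $(I_n+\gamma_{n-1}L_n+\cdots)(I_n+\beta_{n-1}L_n+\cdots)=B$, with the $\gamma$'s supplying exactly the needed $\psi$'s. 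Either side of the inversion can be made to work, but only with the correct identity; as written, your argument has a genuine gap for every case with $q_1\ge 2$.
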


\begin{proof}

Denote $\Phi=I_r+\alpha_{n-1}L_r+\cdots+\alpha_{n-r+1} L_r^{r-1}$ and $(I_n+\alpha_{n-1}L_n+\cdots+\alpha_{1} L_n^{n-1})^{-1}=I_n+\beta_{n-1}L_n+\cdots+\beta_{1} L_n^{n-1}$. Using the partition $(r_1, r, (q_1-1)r)$ of $n$, we can split $I_n+\beta_{n-1}L_n+\cdots+\beta_{1} L_n^{n-1}$ and $I_n+\alpha_{n-1}L_n+\cdots+\alpha_{1} L_n^{n-1}$ into $3 \times 3$ block matrices. Comparing the $(2,1)$ block in $(I_n+\beta_{n-1}L_n+\cdots+\beta_{1} L_n^{n-1})(I_n+\alpha_{n-1}L_n+\cdots+\alpha_{1} L_n^{n-1})=I_n$ we have that

$\begin{pmatrix} \beta_{q_1r} &\cdots& \beta_{n-1} \\ \vdots & \vdots & \vdots \\ \beta_{(q_1-1)r+1} &\cdots &\beta_{n-r} \end{pmatrix} \begin{pmatrix} 1 &\cdots& 0 \\ \vdots & \ddots & \vdots \\ \alpha_{n-r_1+1} &\cdots &1 \end{pmatrix}+\Phi^{-1}\begin{pmatrix} \alpha_{q_1r} &\cdots& \alpha_{n-1} \\ \vdots & \vdots & \vdots \\ \alpha_{(q_1-1)r+1} &\cdots &\alpha_{n-r} \end{pmatrix}=0$.

\medskip

So

$$K=\begin{pmatrix} 0, I_{r_1} \end{pmatrix} \Phi^{-1}\begin{pmatrix} \alpha_{q_1r} &\cdots& \alpha_{n-1} \\ \vdots & \vdots & \vdots \\ \alpha_{(q_1-1)r+1} &\cdots &\alpha_{n-r} \end{pmatrix}=-\begin{pmatrix} 0, I_{r_1} \end{pmatrix} \begin{pmatrix} \beta_{q_1r} &\cdots& \beta_{n-1} \\ \vdots & \vdots & \vdots \\ \beta_{(q_1-1)r+1} &\cdots &\beta_{n-r} \end{pmatrix} $$

$$\begin{pmatrix} 1 &\cdots& 0 \\ \vdots & \ddots & \vdots \\ \alpha_{n-r_1+1} &\cdots &1 \end{pmatrix}=-\begin{pmatrix} \beta_{(q_1-1)r+r_1} &\cdots& \beta_{(q_1-1)r+2r_1-1} \\ \vdots & \vdots & \vdots \\ \beta_{(q_1-1)r+1} &\cdots &\beta_{(q_1-1)r+r_1}\end{pmatrix} \begin{pmatrix} 1 &\cdots& 0 \\ \vdots & \ddots & \vdots \\ \alpha_{n-r_1+1} &\cdots &1 \end{pmatrix}$$

Because each row in $K$ can be generated by the corresponding row in 

$\begin{pmatrix} \beta_{(q_1-1)r+r_1} & \cdots & \beta_{(q_1-1)r+2r_1-1} \\ \vdots & \vdots & \vdots \\ \beta_{(q_1-1)r+1} & \cdots & \beta_{(q_1-1)r+r_1} \end{pmatrix}$ and vice versa, to prove Lemma $3.9$, it is sufficient to show that $\beta_{(q_1-1)r+r_1}, \cdots,  \beta_{(q_1-1)r+2r_1-1} $ and $\psi_0, \cdots, \psi_{q_1r-1}$ generate all $\beta_{(q_1-1)r+i}$ for $1 \le i \le r_1-1$.

Denote $B^{-(q_1-1)} \widehat{D}=B^{-q_1} \widehat{A}$ as $I_n+\gamma_{n-1}L_n+\cdots+\gamma_{1} L_n^{n-1}$, where $\widehat{A}=B\widehat{D}$. From Equation $(3.7)$, we have that $\gamma_{(q_1-1)r+i} \propto \frac{\partial^{q_1-1} d_{r-1}}{\partial^{q_1-2} b_0 \partial b_{r-1-i}}=\psi_{(q_1-1)r+i}$ for $i=0, \cdots, r-1$.

\medskip

From the equation $(I_n+\alpha_{n-1}L_n+\cdots+\alpha_{1} L_n^{n-1})^{-1}=B^{q_1} \widehat{D}^{-1}$, we have that $B^{-(q_1-1)} \widehat{D}(I_n$

\noindent $+\alpha_{n-1}L_n+\cdots+\alpha_{1} L_n^{n-1})^{-1} =B$,  that is,

\begin{equation}
\begin{split}
(I_n+\gamma_{n-1}L_n+\cdots+\gamma_{1} L_n^{n-1}) (I_n+\beta_{n-1}L_n+\cdots+\beta_{1} L_n^{n-1})=I_n+b_{r-1}L_n+\cdots+b_0 L_n^r
\end{split}
\end{equation}

Comparing the coefficients of $L_n^k$ for $k=r+1, \cdots, r+r_1-1$ in both sides of Equation $(3.17)$, we have that 
\begin{equation}
\begin{split}
\beta_{(q_1-1)r+i}+\beta_{(q_1-1)r+i+1} \gamma_{n-1}+\cdots+\beta_{n-1} \gamma_{(q_1-1)r+i+1}+\gamma_{(q_1-1)r+i}=0 \hskip .1 cm for \hskip .1 cm i=r_1-1, \cdots, 1.
\end{split}
\end{equation}

For each term $\beta_{k} \gamma_{l}$ in the equation $\beta_{(q_1-1)r+r_1-1}+\beta_{(q_1-1)r+r_1} \gamma_{n-1}+\cdots+\beta_{n-1} \gamma_{(q_1-1)r+r_1}+\gamma_{(q_1-1)r+r_1-1}=0$, we have either $(q_1-1)r+r_1 \le k \le (q_1-1)r+2r_1-1$ or $(q_1-1)r+r_1 \le l=n+(q_1-1)r+r_1-1-k \le n+(q_1-1)r+r_1-1-(q_1-1)r-2r_1=q_1r-1=(q_1-1)r+r-1$. So $\beta_{(q_1-1)r+r_1-1}$ can be generated by $\beta_{(q_1-1)r+r_1}, \cdots,  \beta_{(q_1-1)r+2r_1-1}$ and $\gamma_{(q_1-1)r+r_1}=\psi_{(q_1-1)r+r_1}, \cdots, \gamma_{(q_1-1)r+r-1}=\psi_{(q_1-1)r+r-1}$. Using Equation $(3.18)$ and induction on $i$ in descend order, we can prove that $\beta_{(q_1-1)r+r_1}, \cdots,  \beta_{(q_1-1)r+2r_1-1}$ and $\gamma_{(q_1-1)r+1}=\psi_{(q_1-1)r+1}, \cdots, \gamma_{(q_1-1)r+r-1}=\psi_{(q_1-1)r+r-1}$ generate all $\beta_{(q_1-1)r+i}$ for $1 \le i \le r_1-1$.

This completes the proof of Lemma $3.9$.

\end{proof}

Let $\psi_{q_1r+i}=\beta_{(q_1-1)r+r_1+i}$ for $i=0, \cdots, r_1-1$. We have

\begin{thm}
$\Delta^{r_1}(\overset{q_1}{\overbrace{\Delta^r \cdots \Delta^r}} I(\mu_{n,r}))= I(\mu_{n,r})+(\psi_0, \cdots, \psi_{q_1r-1}, \psi_{q_1r}, \cdots, \psi_{q_1r+r_1-1})$
\end{thm}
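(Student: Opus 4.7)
The plan is to mimic the row reduction of Theorem 3.6 applied to the Jacobian $\delta J$ of $J := I(\mu_{n,r}) + (\psi_0, \ldots, \psi_{q_1 r - 1})$, and then use Lemma 3.9 to identify the relevant $(n+r-r_1+1)$-minors modulo $J$. Left-multiplying $\delta J$ by the same invertible block matrix used in the proof of Theorem 3.6 (with $p = q_1$) brings $\delta J$ into the form
$$M = \begin{pmatrix} I_n & D \\ 0 & \ast \\ 0 & C \end{pmatrix},$$
where the $\ast$ entries lie in $J$ (by the inductive step of Theorem 3.6 combined with Lemma 3.5) and $C$ is a nonzero scalar multiple of $\left(\partial \psi_{(q_1-1)r+i}/\partial b_j\right)$. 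By the computation in the proof of Proposition 3.8, up to this scalar,
$$C = \begin{pmatrix} 0 & I_r \end{pmatrix} L_n^{(q_1-1)r} B^{-q_1} \widehat{D} \begin{pmatrix} I_r \\ 0 \end{pmatrix},$$
which is exactly the matrix studied in Lemma 3.9.

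Working modulo $J$, the $\ast$ block vanishes, and a standard block-matrix argument using the identity block $I_n$ in the upper-left (column-reduce to zero out $D$, then argue by block diagonal structure) shows that the ideal of $(n+r-r_1+1)$-minors of $M$ is generated modulo $J$ by the $(r-r_1+1)$-minors of $C$. Since $\Phi := I_r + \alpha_{n-1} L_r + \cdots + \alpha_{n-r+1} L_r^{r-1}$ is unit lower Toeplitz, $\Phi^{-1}$ is invertible by Lemma 3.1, so left-multiplication by $\Phi^{-1}$ preserves this ideal and, by Lemma 3.9, transforms $C$ (up to a nonzero scalar) into the block form
$$\begin{pmatrix} \ast & I_{r-r_1} \\ K & 0 \end{pmatrix},$$
with $K$ the $r_1 \times r_1$ matrix of Lemma 3.9. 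Cofactor expansion along the $I_{r-r_1}$ block shows that each entry $K_{i,j}$ equals (up to sign) a specific $(r-r_1+1)$-minor, and conversely column-reducing $\ast$ to $0$ via the $I_{r-r_1}$ block shows that every $(r-r_1+1)$-minor lies in the ideal generated by the entries of $K$. Hence the ideal of $(r-r_1+1)$-minors of the transformed matrix equals the ideal generated by the entries of $K$.

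To finish, Lemma 3.9 delivers the reduction that, modulo $(\psi_0, \ldots, \psi_{q_1 r - 1}) \subseteq J$, the entries of $K$ are generated by the first row of $K$. Unpacking the formula in the proof of Lemma 3.9, the $j$-th entry of the first row of $K$ is
$$F_j = -\beta_{(q_1-1)r + r_1 + j - 1} - \sum_{k = j+1}^{r_1} \alpha_{n - k + j}\, \beta_{(q_1-1)r + r_1 + k - 1}, \qquad j = 1, \ldots, r_1.$$
Substituting $\psi_{q_1 r + i} = \beta_{(q_1-1)r + r_1 + i}$ presents $(F_1, \ldots, F_{r_1})$ as a triangular linear transformation of $(\psi_{q_1 r}, \ldots, \psi_{q_1 r + r_1 - 1})$ with $-1$'s on the diagonal, hence invertible over $\CA$. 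Therefore $(F_1, \ldots, F_{r_1})$ and $(\psi_{q_1 r}, \ldots, \psi_{q_1 r + r_1 - 1})$ generate the same ideal modulo $J$, and combining all the steps yields $\Delta^{r_1} J = J + (\psi_{q_1 r}, \ldots, \psi_{q_1 r + r_1 - 1})$.

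The main technical hurdle is the bookkeeping required to confirm that, after the block row reduction, the $\ast$ entries of $M$ genuinely lie in $J$, so that we are entitled to work modulo $J$ when analyzing minors. This is essentially the inductive step already carried out in the proof of Theorem 3.6, applied now at the level $p = q_1$; the rest of the argument is a mechanical assembly of Lemma 3.9 with the minor-theoretic consequences of the $I_n$ identity block of $M$.
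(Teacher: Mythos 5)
Your proposal is correct and follows essentially the same route as the paper: reduce $\delta(I(\mu_{n,r})+(\psi_0,\dots,\psi_{q_1r-1}))$ by the block row operations of Theorem 3.6, identify the bottom block with a scalar multiple of $\bigl(\partial\psi_{(q_1-1)r+i}/\partial b_j\bigr)$, pass to $\begin{pmatrix} * & I_{r-r_1}\\ K & 0\end{pmatrix}$, and invoke Lemma 3.9 together with $\psi_{q_1r+i}=\beta_{(q_1-1)r+r_1+i}$. Your explicit cofactor-expansion argument for the reverse inclusion and the triangular change of generators from the first row of $K$ to $(\psi_{q_1r},\dots,\psi_{q_1r+r_1-1})$ are slightly more detailed than the paper's treatment but not a different method.
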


\begin{proof} By Lemma $3.5$, Theorem $3.6$ and Proposition $3.8$, we only need to prove that any $(r-r_1+1) \times 
(r-r_1+1)$ minor of $\begin{pmatrix}\frac{\partial \psi_{(q_1-1)r+i}}{\partial b_j}\end{pmatrix}$ can be generated by $\psi_0, \cdots, \psi_{q_1r-1}$ and $\psi_{q_1r}, \cdots, \psi_{q_1r+r_1-1}$.

It is easy to deduce that $\begin{pmatrix}\frac{\partial \psi_{(q_1-1)r+i}}{\partial b_j}\end{pmatrix}=\begin{pmatrix}\frac{\partial^{q_1} d_i}{\partial^{q_1-1} b_0 \partial b_j}\end{pmatrix}=\begin{pmatrix}0&I_r\end{pmatrix} (-1)^{q_1} q_1! B^{-q_1} \left(\frac{\partial B}{\partial b_0} \right)^{q_1-1}$

\noindent $D=\begin{pmatrix}0&I_r\end{pmatrix} (-1)^{q_1} q_1! B^{-q_1} L_n^{(q_1-1)r} \widehat{D} \begin{pmatrix} I_r\\ 0 \end{pmatrix} \propto \begin{pmatrix}0&I_r\end{pmatrix} B^{-q_1} L_n^{(q_1-1)r} \widehat{D} \begin{pmatrix} I_r\\ 0 \end{pmatrix}$. So we only need to prove that any $(r-r_1+1) \times 
(r-r_1+1)$ minor of $\begin{pmatrix}0&I_r\end{pmatrix} B^{-q_1} L_n^{(q_1-1)r} \widehat{D} \begin{pmatrix} I_r\\ 0 \end{pmatrix}$, and hence $(I_r+\alpha_{n-1}L_r+\cdots+\alpha_{n-r+1} L_r^{r-1})^{-1} \begin{pmatrix} 0, I_r \end{pmatrix} L_n^{(q_1-1)r} B^{-q_1}\widehat{D} \begin{pmatrix} I_r\\ 0 \end{pmatrix}=\begin{pmatrix} * &I_{r-r_1}\\ K & 0 \end{pmatrix}$, can be generated by $\psi_0, \cdots, \psi_{q_1r-1}$ and $\psi_{q_1r}, \cdots, \psi_{q_1r+r_1-1}$. Any $(r-r_1+1) \times (r-r_1+1)$ minor of $\begin{pmatrix} * &I_{r-r_1}\\ K & 0 \end{pmatrix}$ must contain a row with elements either in $K$ or equal to zero. Expanding this minor along that row gives that elements in $K$ generate the minor. By the proof of Lemma $3.9$, each element in $K$ can be generated by $\psi_0, \cdots, \psi_{q_1r-1}$ and $\beta_{(q_1-1)r+r_1}, \cdots, \beta_{(q_1-1)r+2 r_1-1}$. By the definition of $(\psi_{q_1r}, \cdots, \psi_{q_1r+r_1-1})$, we have that $\psi_0, \cdots, \psi_{q_1r-1}$ and $\psi_{q_1r}, \cdots, \psi_{q_1r+r_1-1}$ generate all $(r-r_1+1) \times (r-r_1+1)$ minors of $\begin{pmatrix}\frac{\partial \psi_{(q_1-1)r+i}}{\partial b_j}\end{pmatrix}$. This complete the proof of Theorem $3.10$.

\end{proof}

Denote $f_0(x)=f(x)$, $f_1(x)=g(x)$, $h_0(x)=h(x)$, $r_{-1}=n$ and $r_0=r$. We will show that a sequence of monic polynomials $f_0(x), f_1(x), f_2(x), \cdots, f_{k+1}(x)$ can be produced inductively starting from $f_0(x)$ and $f_1(x)$ such that the degree of $f_i(x)$ is $r_{i-1}$ and each product of $h_i(x)=f_i(x)f_{i+1}(x)$ gives a map $\mu_{r_{i-1},r_i}: \C^{r_{i-1}} \times \C^{r_i} \rightarrow \C^{r_{i-1}+r_i}$ with the property that the polynomials generated at each of the first $(q_{i+1}+1)$ steps of the critical extensions of $I(\mu_{r_{i-1},r_i})$ can be added into the corresponding steps to form the critical extensions of $I(\mu_{n,r})$.  

Recall that $B^{-q_1} \widehat{A}=I_n+\gamma_{n-1}L_n+\cdots+\gamma_{1} L_n^{n-1}$. Let $f_2(x)=x^{r_1}+\gamma_{n-1} x^{r_1-1}+\cdots+\gamma_{n-r_1}$. Then the product $h_1(x)= f_1(x) f_2(x)= g(x)f_2(x)=x^n+\sigma_{r+r_1-1} x^{n-1}+\cdots+\sigma_0$ gives a map $\mu_{r,r_1}: \C^r \times \C^{r_1} \rightarrow \C^{r+r_1}$. Taking derivatives of the coefficients of $h_1(x)$ with respect to $b_{r-1}, \cdots, b_0, \gamma_{n-1}, \cdots, \gamma_{n-r_1}$ gives its first Jacobian

\begin{equation}
\begin{split}
\delta I(\mu_{r,r_1})= \begin{pmatrix} \begin{pmatrix} I_{r+r_1}, 0 \end{pmatrix} \Gamma_{r+r_1+1} \begin{pmatrix} I_r\cr 0 \cr \end{pmatrix},\begin{pmatrix} I_{r+r_1}, 0 \end{pmatrix} B_{r+r_1+1} \begin{pmatrix} I_{r_1}\cr 0 \cr \end{pmatrix}\end{pmatrix}\end{split}
\end{equation}

\noindent where $\Gamma_{r+r_1+1}=I_{r+r_1+1}+\gamma_{n-1} L_{r+r_1+1}+\cdots+\gamma_{n-r_1} L_{r+r_1+1}^{r_1}$ and $B_{r+r_1+1}=I_{r+r_1+1}+b_{r-1} L_{r+r_1+1}+\cdots+b_0 L_{r+r_1+1}^{r}$.

Repeating the same process as we did for $I(\mu_{n,r})$, we get polynomials $\varphi_0, \cdots, \varphi_{r_1-1}, \cdots,$

\noindent $\varphi_{(q_2-1)r_1},\cdots, \varphi_{q_2r_1-1}$ and $\varphi_{q_2r_1},\cdots, \varphi_{q_2r_1+r_2-1}$ which satisfy

\begin{equation}
\begin{split}
\begin{pmatrix} \frac{\partial \varphi_{sr_1+i}}{\partial \gamma_{n-j}}\end{pmatrix}=- (s+1) \begin{pmatrix} \frac{\partial \varphi_{sr_1+i}}{\partial b_j} \end{pmatrix} \begin{pmatrix} I_{r}, 0 \end{pmatrix}\Gamma_{r+r_1}^{-1} \begin{pmatrix} I_{r+r_1}, 0 \end{pmatrix} B_{r+r_1+1} \begin{pmatrix} I_{r_1}\cr 0 \cr \end{pmatrix}  \text{for} \hskip .1 cm s=0, \cdots, \\q_2-1, i=0, \cdots, r_1-1 \hskip .1 cm \text{and} \hskip .1 cm j=1, \cdots, r_1, \text{where} \hskip .1 cm \Gamma_{r+r_1}=\begin{pmatrix} I_{r+r_1}, 0 \end{pmatrix} \Gamma_{r+r_1+1} \begin{pmatrix} I_{r+r_1}\cr 0 \cr \end{pmatrix}
\end{split}
\end{equation}

\noindent and

\begin{equation}
\begin{split}\overset{s}{\overbrace{\Delta^{r_1} \cdots \Delta^{r_1}}} I(\mu_{r,r_1})= I(\mu_{r,r_1})+(\varphi_0, \cdots, \varphi_{sr_1-1})\hskip .1 cm \text{for} \hskip .1 cm s=1, \cdots, q_2 \\
\Delta^{r_2}(\overset{q_2}{\overbrace{\Delta^{r_1} \cdots \Delta^{r_1}}} I(\mu_{r,r_1}))= I(\mu_{r,r_1})+(\varphi_0, \cdots, \varphi_{q_2r_1-1}, \varphi_{q_2r_1}, \cdots, \varphi_{q_2r_1+r_2-1})
\end{split}
\end{equation}

We will prove that adding these polynomials correspondingly into the generator sets gives the critical extensions of $\overset{q_1}{\overbrace{\Delta^{r} \cdots \Delta^{r}}} I(\mu_{n,r})$.

The following lemma is true.

\begin{lem} For any $s \hskip .1 cm (1 \le s \le q_1)$, the coefficients of $L_{n+r+1}^{i}$ in $B_{n+r+1}^{-s} A_{n+r+1}$ for $i=n-sr+1, \cdots, n+r$ are zeros $\mod (\psi_0, \cdots, \psi_{sr-1})$. 
\end{lem}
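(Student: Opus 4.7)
The plan is to proceed by induction on $s$, from $s=1$ up to $s=q_1$. The base case is direct: since $B_{n+r+1}^{-1} A_{n+r+1} = D_{n+r+1}$, the coefficient of $L_{n+r+1}^i$ equals $d_{n-i}$. For $i=n-r+1,\ldots,n$ this is one of $d_{r-1},\ldots,d_0 = \psi_{r-1},\ldots,\psi_0$; for $i=n+1,\ldots,n+r$ the coefficient $d_{-j}$ already lies in $(d_0,\ldots,d_{r-1}) = (\psi_0,\ldots,\psi_{r-1})$ via the recursion $d_{-j}+b_{r-1}d_{-j+1}+\cdots+b_0 d_{r-j}=0$ derived in the preceding subsection.

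For the inductive step $s\to s+1$ (with $1\le s\le q_1-1$), the engine I would exploit is the identity
\[
L_{n+r+1}^{r}\bigl(B_{n+r+1}^{-(s+1)} A_{n+r+1}\bigr) \;=\; -\tfrac{1}{s}\,\frac{\partial}{\partial b_0}\bigl(B_{n+r+1}^{-s} A_{n+r+1}\bigr),
\]
obtained by differentiating $B_{n+r+1}^{-s}$ in $b_0$, using $\partial B_{n+r+1}/\partial b_0 = L_{n+r+1}^{r}$, the commutativity of Lemma 3.1, and the fact that $A_{n+r+1}$ is independent of $b_0$. Writing $B_{n+r+1}^{-s} A_{n+r+1} = \sum_k c_k^{(s)} L_{n+r+1}^k$ and comparing coefficients of $L_{n+r+1}^{j+r}$ gives the recursion $c_j^{(s+1)} = -\tfrac{1}{s}\,\partial c_{j+r}^{(s)}/\partial b_0$ for $0\le j\le n$. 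In the range $n-(s+1)r+1 \le j\le n$, the index $j+r$ falls in the range covered by the inductive hypothesis, so $c_{j+r}^{(s)} \in (\psi_0,\ldots,\psi_{sr-1})$; applying $\partial/\partial b_0$ then keeps us inside $(\psi_0,\ldots,\psi_{(s+1)r-1})$ once I observe the identity $\partial\psi_\ell/\partial b_0 = \psi_{\ell+r}$ for $\ell \le (q_1-1)r-1$, which follows from Lemma 3.3 together with the reformulation $\psi_{s'r+i} = \partial^{s'} d_i/\partial b_0^{s'}$. The remaining indices $j=n+1,\ldots,n+r$ I would handle via the companion recursion extracted from $B_{n+r+1}(B_{n+r+1}^{-(s+1)} A_{n+r+1}) = B_{n+r+1}^{-s} A_{n+r+1}$, namely
\[
c_j^{(s+1)} \;=\; c_j^{(s)} - b_{r-1}c_{j-1}^{(s+1)} - \cdots - b_0 c_{j-r}^{(s+1)},
\]
and then run a short secondary induction on $j$: the leading term $c_j^{(s)}$ lies in $(\psi_0,\ldots,\psi_{sr-1})$ by the outer induction, and the later terms, whose indices lie between $n-r+1$ and $n$, were already placed in $(\psi_0,\ldots,\psi_{(s+1)r-1})$ at the previous step.

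The delicate point I expect to be the main obstacle is verifying that $\partial\psi_\ell/\partial b_0 = \psi_{\ell+r}$ throughout the indices that actually appear in the argument. The identification $\psi_{s'r+i} = \partial^{s'} d_i/\partial b_0^{s'}$ (via Lemma 3.3) makes this transparent but restricts $\ell$ to $\ell \le (q_1-1)r-1$. Since the outer induction only calls on $\ell \le sr-1$ with $s\le q_1-1$, the two bounds meet precisely, which explains why the lemma is stated exactly for $1\le s\le q_1$: pushing past the $q_1$-th critical extension requires switching to the alternative generators $\psi_{q_1 r},\ldots,\psi_{q_1 r+r_1-1}$ supplied by Lemma 3.9, which are no longer pure $b_0$-derivatives of the $d_i$'s.
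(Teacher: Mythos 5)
Your proof is correct, and it rests on the same two engines as the paper's: induction on $s$, the derivative identity $\frac{\partial}{\partial b_0}\bigl(B_{n+r+1}^{-s}A_{n+r+1}\bigr)=-s\,L_{n+r+1}^{r}\,B_{n+r+1}^{-(s+1)}A_{n+r+1}$ (the same computation that underlies Equation (3.7)), and the Toeplitz recursion extracted from $\bigl(B_{n+r+1}^{-(s+1)}A_{n+r+1}\bigr)B_{n+r+1}=B_{n+r+1}^{-s}A_{n+r+1}$. Where you differ is in the bookkeeping. The paper uses the derivative identity only once per step, to pin down the $r$ genuinely new coefficients exactly as $\lambda_{pr+i}\propto\psi_{pr+i}$ (left-multiplying by the row vector $e_{n-r+1}$), and then lets the multiplication recursion (its Equation (3.22)) sweep upward through the entire remaining range $i=n-pr+1,\cdots,n+r$; this never requires differentiating an element of the ideal. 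You instead apply the derivative identity coefficient-by-coefficient over the whole range $j\le n$, which obliges you to verify that $\partial/\partial b_0$ carries $(\psi_0,\cdots,\psi_{sr-1})$ into $(\psi_0,\cdots,\psi_{(s+1)r-1})$ — i.e.\ the Leibniz rule plus $\partial\psi_\ell/\partial b_0=\psi_{\ell+r}$, valid for $\ell\le(q_1-1)r-1$, which as you note is exactly the range the induction consumes — and you reserve the multiplication recursion for the overflow indices $j=n+1,\cdots,n+r$ alone. Both routes close; the paper's version has the side benefit of identifying the new coefficients with the new generators up to nonzero constants (a fact of the same shape it exploits again in Lemma 3.9 and Proposition 3.12), whereas yours delivers only the ideal membership that the lemma actually asserts, at the price of the extra closure observation.
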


\begin{proof}
The case $s=1$ was proved at the beginning of this subsection. 

Suppose that we proved Lemma $3.11$ for $s \le p$. If $p=q_1$, we are done. Otherwise, we may assume that $1 \le p<q_1$. We will show that the coefficients of $L_{n+r+1}^{i}$ in $B_{n+r+1}^{-(p+1)} A_{n+r+1}$ for $i=n-(p+1)r+1, \cdots, n+r$ are zeros $\mod (\psi_0, \cdots, \psi_{(p+1)r-1})$.

Denote $B_{n+r+1}^{-(p+1)} A_{n+r+1}=I_{n+r+1}+\lambda_{n-1} L_{n+r+1}+\cdots+\lambda_{0} L_{n+r+1}^n+\lambda_{-1} L_{n+r+1}^{n+1}+\cdots+\lambda_{-r} L_{n+r+1}^{n+r}$ and $B_{n+r+1}^{-p} A_{n+r+1}=I_{n+r+1}+\kappa_{n-1} L_{n+r+1}+\cdots+\kappa_{0} L_{n+r+1}^n+\kappa_{-1} L_{n+r+1}^{n+1}+\cdots+\kappa_{-r} L_{n+r+1}^{n+r}$. By inductive assumption $\kappa_{n-i} \equiv 0 \mod (\psi_0, \cdots, \psi_{pr-1})$ for $i=n-pr+1, \cdots, n+r$. 

By Equation $(3.7)$, we have $\begin{pmatrix} I_n, 0 \end{pmatrix} (-1)^p p! B_{n+r+1}^{-(p+1)} L_{n+r+1}^{(p-1)r} A_{n+r+1} \begin{pmatrix} I_r \\ 0 \end{pmatrix}=(-1)^p p! B^{-p-1} $

\noindent $L_n^{(p-1)r}A=\begin{pmatrix} \frac{\partial^{p} d_{i}}{\partial^{p-1} b_0 \partial b_j} \end{pmatrix}$. Left multiplying this equation by $e_{n-r+1}$ gives that $\lambda_{pr+i} \propto \frac{\partial^{p} d_{r-1}}{\partial^{p-1} b_0 \partial b_{r-1-i}}=\psi_{pr+i}$ for $i=0, \cdots, r-1$, where $e_{n-r+1}$ is a $1 \times n$ vector with $1$ in the $(n-r+1)$ position and zero elsewhere. Comparing the coefficients of $L_{n+r+1}^i$ in both sides of the equation $(B_{n+r+1}^{-(p+1)} A_{n+r+1}) B_{n+r+1}=B_{n+r+1}^{-p} A_{n+r+1}$ for $i=n-pr+1, \cdots, n+r$, we have that 
{\small
\begin{equation}
\begin{split}
\lambda_{n-i}+\lambda_{n-i+1} b_{r-1}+\cdots+\lambda_{n-i+r} b_0 \equiv 0  \mod (\psi_0, \cdots, \psi_{(p+1)r-1}) \hskip .1 cm \text{for} \hskip .1 cm i=n-pr+1, \cdots, n+r
\end{split}
\end{equation}
}
Using Equation $(3.22)$ and $\lambda_{pr+i} \propto \psi_{pr+i}$ for $i=0, \cdots, r-1$, we immediately have that $\lambda_{n-i} \equiv 0 \mod (\psi_0, \cdots, \psi_{(p+1)r-1}) \hskip .1 cm \text{for} \hskip .1 cm i=n-pr+1, \cdots, n+r$. Because $\lambda_{pr+i}=\lambda_{n-(n-pr-i)}$ for $i=0, \cdots, r-1$, so $\lambda_{n-i} \equiv 0 \mod (\psi_0, \cdots, \psi_{(p+1)r-1}) \hskip .1 cm \text{for} \hskip .1 cm i=n-(p+1)r+1, \cdots, n-pr$ as well. This completes the proof of Lemma $3.11$.

\end{proof}

From Lemma $3.11$, we have the following proposition.

\begin{prop}
$\varphi_i \equiv \psi_{q_1r+i} \mod (\psi_0, \cdots, \psi_{q_1r-1})$ for $i=0, \cdots, r_1-1$.
\end{prop}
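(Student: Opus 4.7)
The plan is to express both $\varphi_i$ and $\psi_{q_1r+i}$ as the coefficient of $L^{r-i}$ in suitable power series and then to show those series agree modulo $(\psi_0,\ldots,\psi_{q_1r-1})$. On one hand, $\varphi_i$ is by construction the analog of $d_i$ for the reduced map $\mu_{r,r_1}$, so (via the matrix identification of Lemma $3.1$) it equals the coefficient of $L^{r-i}$ in $D'(L)=\Gamma(L)^{-1}B(L)$, where $\Gamma(L)=1+\gamma_{n-1}L+\cdots+\gamma_{n-r_1}L^{r_1}$ and $B(L)=1+b_{r-1}L+\cdots+b_0 L^r$. On the other hand, a direct index check (using $n=q_1r+r_1$) shows that $\psi_{q_1r+i}=\beta_{(q_1-1)r+r_1+i}$ is the coefficient of $L^{r-i}$ in $1+\beta_{n-1}L+\cdots+\beta_1 L^{n-1}$, the inverse in $\C[L]/L^n$ of $1+\alpha_{n-1}L+\cdots+\alpha_1 L^{n-1}=B^{-q_1}\widehat{D}$. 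The identity $B^{-(q_1-1)}\widehat{D}=B\cdot B^{-q_1}\widehat{D}$ in $\C[L]/L^n$ then lets me rewrite that inverse as $\bigl(1+\gamma_{n-1}L+\cdots+\gamma_1 L^{n-1}\bigr)^{-1}B(L)$.

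The heart of the argument will be to show that, modulo $(\psi_0,\ldots,\psi_{q_1r-1})$ in $\C[L]/L^n$,
\[
1+\gamma_{n-1}L+\cdots+\gamma_1 L^{n-1} \;\equiv\; \Gamma(L),
\]
equivalently, that the tail coefficients $\gamma_{n-r_1-1},\ldots,\gamma_1$ all lie in the ideal. This will follow from Lemma $3.11$. The recursions $d_{-j}+b_{r-1}d_{-j+1}+\cdots+b_0 d_{r-j}=0$ noted right after matrix $(3.1)$ inductively express each of $d_0,d_{-1},\ldots,d_{-r}$ as an element of $(d_0,\ldots,d_{r-1})=(\psi_0,\ldots,\psi_{r-1})$, so $D\equiv\widehat{D}$ modulo that ideal in $\C[L]/L^{n+r+1}$. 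Consequently
\[
B^{-q_1}A \;=\; B^{-(q_1-1)}D \;\equiv\; B^{-(q_1-1)}\widehat{D} \pmod{(\psi_0,\ldots,\psi_{r-1})}
\]
in $\C[L]/L^{n+r+1}$. Lemma $3.11$ with $s=q_1$ states that the coefficients of $L^i$ in $B^{-q_1}A$ for $i=r_1+1,\ldots,n+r$ all vanish modulo $(\psi_0,\ldots,\psi_{q_1r-1})$; hence the same holds for $B^{-(q_1-1)}\widehat{D}$, and restricting to $i=r_1+1,\ldots,n-1$ is exactly the claimed vanishing of $\gamma_{n-r_1-1},\ldots,\gamma_1$.

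Since $\Gamma(L)$ has constant term $1$, the congruence above passes through inversion, and combining with the first paragraph yields
\[
1+\beta_{n-1}L+\cdots+\beta_1 L^{n-1} \;\equiv\; \Gamma(L)^{-1}B(L) \;=\; D'(L) \pmod{(\psi_0,\ldots,\psi_{q_1r-1})}
\]
in $\C[L]/L^n$. Equating coefficients of $L^{r-i}$ on both sides gives $\psi_{q_1r+i}\equiv\varphi_i$, as required. I expect the main obstacle to be the bookkeeping between the two truncation rings ($\C[L]/L^n$, where the $\alpha,\beta,\gamma$ relations live, versus $\C[L]/L^{n+r+1}$, where Lemma $3.11$ is stated); this is harmless here because all coefficients of interest sit in degrees $r-r_1+1,\ldots,r$, which are strictly less than $n$ since $r_1\ge 1$ and $n\ge r+r_1$.
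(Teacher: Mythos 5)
Your proposal is correct and takes essentially the same route as the paper: both identify $\varphi_i$ with the coefficient of $L^{r-i}$ in $\Gamma^{-1}B$ via the Jacobian of $\mu_{r,r_1}$, identify $\psi_{q_1r+i}=\beta_{(q_1-1)r+r_1+i}$ with the corresponding coefficient of $(B^{-q_1}\widehat{D})^{-1}=(B^{-(q_1-1)}\widehat{D})^{-1}B$, and invoke Lemma $3.11$ to show $\gamma_{n-r_1-1},\cdots,\gamma_1$ lie in $(\psi_0,\cdots,\psi_{q_1r-1})$. The only (cosmetic) difference is the last step, where you pass the congruence of the $\gamma$-series through a formal power-series inversion, while the paper compares the coefficient recursions coming from Equation $(3.17)$ and from $\Gamma W\equiv B$ and inducts on the index $k$.
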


\begin{proof}

From the first Jacobian $\delta I(\mu_{r,r_1})$, we have that $\Gamma_{r+r_1}^{-1}\delta I(\mu_{r,r_1})=\begin{pmatrix} \begin{pmatrix}I_r\\0\end{pmatrix} , W \begin{pmatrix}I_{r_1}\\0\end{pmatrix} \end{pmatrix}$, where 
$W=\Gamma_{r+r_1}^{-1} \begin{pmatrix} I_{r+r_1}, 0 \end{pmatrix} B_{r+r_1+1} \begin{pmatrix} I_{r+r_1}\cr 0 \cr \end{pmatrix}$. Denote $W$ as $I_{r+r_1}+ w_{r-1} L_{r+r_1}+\cdots+ w_0 L_{r+r_1}^{r}+w_{-1} L_{r+r_1}^{r+1}+ \cdots+ w_{-r_1+1} L_{r+r_1}^{r+r_1-1}$. We have that $w_i=\varphi_i$ for $i=0, \cdots, r_1-1$.

By Lemma $3.11$, $\begin{pmatrix}I_{r+r_1}, 0\end{pmatrix} B_{n+r+1}^{-q_1} A_{n+r+1} \begin{pmatrix}I_{r+r_1}\\ 0\end{pmatrix} \equiv I_{r+r_1}+ \gamma_{n-1} L_{r+r_1}+\cdots+\gamma_{n-r_1} L_{r+r_1}^{r_1}$

\noindent $\equiv I_{r+r_1}+ \gamma_{n-1} L_{r+r_1}+\cdots+\gamma_{n-r} L_{r+r_1}^{r} \mod (\psi_0, \cdots, \psi_{q_1r-1})$.
Comparing the coefficients of $L_n^k$ in both sides of Equation $(3.17)$ and $L_{r+r_1}^k$ in both sides of $(I_{r+r_1}+ \gamma_{n-1} L_{r+r_1}+\cdots+\gamma_{n-r} L_{r+r_1}^{r}) (I_{r+r_1}+ w_{r-1} L_{r+r_1}+\cdots+ w_0 L_{r+r_1}^{r}+w_{-1} L_{r+r_1}^{r+1}+ \cdots+ w_{-r_1+1} L_{r+r_1}^{r+r_1-1}) \equiv \Gamma_{r+r_1} W=\begin{pmatrix} I_{r+r_1}, 0 \end{pmatrix} B_{r+r_1+1} \begin{pmatrix} I_{r+r_1}\cr 0 \cr \end{pmatrix}=I_{r+r_1}+b_{r-1} L_{r+r_1}+ \cdots + b_0 L_{r+r_1}^r \mod (\psi_0, \cdots, \psi_{q_1r-1})$ for $k=1, \cdots, r$, we have that

\begin{equation}
\begin{split}
\beta_{n-k}+ \beta_{n-k+1} \gamma_{n-1}+\cdots+ \beta_{n-1} \gamma_{n-k+1}+ \gamma_{n-k}=b_{r-k}\equiv w_{r-k}+w_{r-k+1} \gamma_{n-1}+\\ \cdots + w_{r-1} \gamma_{n-k+1}+\gamma_{n-k} \mod (\psi_0, \cdots, \psi_{q_1r-1}) \hskip .1 cm for \hskip .1 cm k=1, \cdots, r \\ 
\end{split}
\end{equation}

Let $k=1$ in Equation $(3.23)$, we have $\beta_{n-1}+\gamma_{n-1} \equiv w_{r-1}+\gamma_{n-1}$, so $\beta_{n-1} \equiv w_{r-1}$. Let $k=2, \cdots, r$ in Equation $(3.23)$ and use induction, we have $\beta_{n-k} \equiv w_{r-k}$ for each $k=1, \cdots, r$, so $\varphi_i=w_i \equiv \beta_{n-r+i}=\beta_{(q_1-1)r+r_1+i}=\psi_{q_1r+i} \mod (\psi_0, \cdots, \psi_{q_1r-1})$ for $i=0, \cdots, r_1-1$.
\end{proof}

As an immediate corollary of Theorem $3.10$ and Proposition $3.12$, we have

\begin{cor}
$\Delta^{r_1}(\overset{q_1}{\overbrace{\Delta^r \cdots \Delta^r}} I(\mu_{n,r}))=I(\mu_{n,r})+(\psi_0, \cdots, \psi_{q_1r-1}, \varphi_{0}, \cdots, \varphi_{r_1-1})$
\end{cor}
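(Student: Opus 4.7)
The plan is to deduce this corollary directly by combining Theorem $3.10$ and Proposition $3.12$, with essentially no new computation needed. Theorem $3.10$ already gives us a clean description of the $(q_1+1)$-th critical extension in terms of the generators $\psi_{q_1r}, \ldots, \psi_{q_1r+r_1-1}$, so the task reduces to showing that these generators can be replaced by $\varphi_0, \ldots, \varphi_{r_1-1}$ without changing the ideal.

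First I would apply Theorem $3.10$ to write
\[
\Delta^{r_1}\bigl(\overset{q_1}{\overbrace{\Delta^r \cdots \Delta^r}}\, I(\mu_{n,r})\bigr)
= I(\mu_{n,r}) + (\psi_0, \ldots, \psi_{q_1r-1},\, \psi_{q_1r}, \ldots, \psi_{q_1r+r_1-1}).
\]
Next, Proposition $3.12$ gives $\varphi_i \equiv \psi_{q_1r+i} \pmod{(\psi_0,\ldots,\psi_{q_1r-1})}$ for each $i = 0, \ldots, r_1-1$. This means that for every such $i$ there exist elements of the ideal $(\psi_0,\ldots,\psi_{q_1r-1})$ whose sum with $\psi_{q_1r+i}$ equals $\varphi_i$, and conversely. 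Therefore each $\psi_{q_1r+i}$ lies in $I(\mu_{n,r}) + (\psi_0,\ldots,\psi_{q_1r-1}, \varphi_0, \ldots, \varphi_{r_1-1})$, and each $\varphi_i$ lies in $I(\mu_{n,r}) + (\psi_0,\ldots,\psi_{q_1r-1}, \psi_{q_1r}, \ldots, \psi_{q_1r+r_1-1})$.

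From these two containments I would conclude that the two ideals
\[
I(\mu_{n,r}) + (\psi_0, \ldots, \psi_{q_1r-1},\, \psi_{q_1r}, \ldots, \psi_{q_1r+r_1-1})
\quad\text{and}\quad
I(\mu_{n,r}) + (\psi_0, \ldots, \psi_{q_1r-1},\, \varphi_0, \ldots, \varphi_{r_1-1})
\]
coincide, which is exactly the statement of the corollary. Since both Theorem $3.10$ and Proposition $3.12$ are already established, there is no genuine obstacle — the whole argument is a one-line substitution of generators modulo $(\psi_0,\ldots,\psi_{q_1r-1})$, and the main conceptual content has already been carried by the two preceding results.
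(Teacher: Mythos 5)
Your proposal is correct and matches the paper exactly: the paper states Corollary $3.13$ as an immediate consequence of Theorem $3.10$ and Proposition $3.12$, and your observation that $\varphi_i \equiv \psi_{q_1r+i} \pmod{(\psi_0,\ldots,\psi_{q_1r-1})}$ lets one swap the generators without changing the ideal is precisely the intended (unwritten) argument.
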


Now we can prove the following theorem.

\begin{thm}
$\overset{s}{\overbrace{\Delta^{r_1} \cdots \Delta^{r_1}}} \overset{q_1}{\overbrace{\Delta^r \cdots \Delta^r}} I(\mu_{n,r})= I(\mu_{n,r})+(\psi_0, \cdots, \psi_{q_1r-1}, \varphi_0, \cdots, \varphi_{sr_1-1})$ 

for $s=1, \cdots, q_2$ and

$\Delta^{r_2}(\overset{q_2}{\overbrace{\Delta^{r_1} \cdots \Delta^{r_1}}} \overset{q_1}{\overbrace{\Delta^r \cdots \Delta^r}} I(\mu_{n,r})= I(\mu_{n,r})+(\psi_0, \cdots, \psi_{q_1r-1},\varphi_0, \cdots, \varphi_{q_2r_1-1}, \varphi_{q_2r_1},$

$ \cdots, \varphi_{q_2r_1+r_2-1})$
\end{thm}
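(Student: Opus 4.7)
The plan is to mirror the proofs of Theorem $3.6$ and Theorem $3.10$ at the ``next level,'' where the roles of $f(x)$ and $g(x)$ are now played by $f_1(x) = g(x)$ and $f_2(x) = x^{r_1} + \gamma_{n-1}x^{r_1-1} + \cdots + \gamma_{n-r_1}$. By Equation $(3.21)$, the critical extensions of $I(\mu_{r,r_1})$ already produce the polynomials $\varphi_0, \cdots, \varphi_{q_2 r_1 + r_2 - 1}$, and Proposition $3.12$ bridges them to the $\psi$'s via $\varphi_i \equiv \psi_{q_1 r + i} \mod (\psi_0, \cdots, \psi_{q_1 r - 1})$. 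The task is to transfer these conclusions from the $\mu_{r,r_1}$ level up to the ideal $I(\mu_{n,r})$.

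We would prove the first identity by induction on $s$. The base case $s = 1$ is Corollary $3.13$. For the inductive step, assume the formula for $s = p$ with $1 \le p < q_2$. Form the Jacobian of $I(\mu_{n,r}) + (\psi_0, \cdots, \psi_{q_1 r - 1}, \varphi_0, \cdots, \varphi_{p r_1 - 1})$ and perform the analog of the row operations used in Theorem $3.6$: first eliminate using $(B_{n+r})^{-1}$, then subtract appropriate multiples of the $\partial \psi_\cdot / \partial a_\cdot$ and $\partial \varphi_\cdot / \partial b_\cdot$ rows. Equation $(3.20)$ for the $\varphi$'s plays the role that Lemma $3.5$ played for the $\psi$'s, so the reduced Jacobian acquires a block form analogous to $(3.16)$ in which the only new nontrivial block at the origin is a nonzero scalar multiple of $\begin{pmatrix} \partial \varphi_{(p-1)r_1+i} / \partial \gamma_{n-j} \end{pmatrix}$. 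Applying the analogs of Lemma $3.3$ and Proposition $3.4$ within the sub-ring associated to $\mu_{r,r_1}$, one concludes both that the corank is $r_1$ and that the new $(n+r-r_1+1) \times (n+r-r_1+1)$ minors are generated exactly by $\varphi_{p r_1}, \cdots, \varphi_{(p+1) r_1 - 1}$. The second identity, concerning the $\Delta^{r_2}$-extension, then follows by the same pattern as Theorem $3.10$, using the analog of Lemma $3.9$ for the pair $(r_1, r_2)$.

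The main obstacle is a bookkeeping one: the derivative identities $(3.20)$ are expressed in the intermediate coordinates $(b_j, \gamma_{n-j})$, whereas the critical extensions of $I(\mu_{n,r})$ must be computed in the original coordinates $(a_j, b_j)$. The bridge is Lemma $3.11$, which ensures that the $\gamma_{n-j}$ for $1 \le j \le r_1$ are congruent, modulo $(\psi_0, \cdots, \psi_{q_1 r - 1})$, to genuine polynomials in $(a_j, b_j)$, and that the corresponding change-of-variables Jacobian is the identity at the origin. Once this coordinate translation is made precise and combined with Proposition $3.12$ to identify the first batch of $\varphi$'s with the last batch of $\psi$'s, every step of the induction reduces essentially verbatim to the arguments already carried out in Theorems $3.6$ and $3.10$, now with $(r, r_1)$ playing the role that $(n, r)$ played there.
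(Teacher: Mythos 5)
Your overall strategy coincides with the paper's: induction on $s$ with Corollary $3.13$ as the base case, Equation $(3.20)$ playing the role of Lemma $3.5$, the critical extensions $(3.21)$ of $I(\mu_{r,r_1})$ supplying the $\varphi$'s, Proposition $3.12$ identifying the first batch of $\varphi$'s with $\psi_{q_1r},\dots,\psi_{q_1r+r_1-1}$, and, for the $\Delta^{r_2}$ part, a comparison of the reduced Jacobian at the $\mu_{n,r}$ level with the one at the $\mu_{r,r_1}$ level. You also correctly locate the main difficulty in the mismatch between the coordinates $(\gamma_{n-j},b_j)$ in which $(3.20)$ lives and the coordinates $(a_j,b_j)$ in which $\delta I(\mu_{n,r})$ is naturally written.

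However, the mechanism you propose for resolving that difficulty has a genuine gap. You plan to row-reduce the Jacobian in the original coordinates and then invoke $(3.20)$, citing Lemma $3.11$ as the bridge. But the entries $\partial\varphi/\partial b_j$ appearing in the $(a,b)$-Jacobian are derivatives taken with the $a_j$ held fixed, whereas the $\partial\varphi/\partial b_j$ in Equation $(3.20)$ are taken with the $\gamma_{n-j}$ (and the $\psi$'s) held fixed; these are different operators, so $(3.20)$ does not apply to the rows you produce, and Lemma $3.11$ by itself says nothing about this. The paper's actual bridge consists of two constructions absent from your sketch: Lemma $3.15$, which shows that $(\psi_0,\dots,\psi_{q_1r-1},\gamma_{n-r_1},\dots,\gamma_{n-1},b_0,\dots,b_{r-1})$ is a new local coordinate system at the origin, so that by coordinate-invariance of the Jacobian extension the entire Jacobian may be computed in these coordinates; and Lemma $3.16$, which replaces the generators $c_i$ of $I(\mu_{n,r})$ by the coefficients $\tau_i$ of $h_1(x)g(x)^{q_1}$ modulo $(\psi_0,\dots,\psi_{q_1r-1})$ — it is the $\tau_i$, not the $c_i$, whose derivatives in the new coordinates give the explicit block form $(3.24)$ on which the whole reduction to $(3.26)$--$(3.27)$ rests. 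Without both of these steps the block structure you assert "acquires" is not justified, so the argument as proposed does not close.
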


We need the following two lemmas.

\begin{lem} $(\psi_0, \cdots, \psi_{q_1r-1}, \gamma_{n-r_1}, \cdots, \gamma_{n-1}, b_0, \cdots, b_{r-1})$ forms a new local coordinate system around zero on $\C^n \times \C^r$.
\end{lem}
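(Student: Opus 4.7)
The plan is to verify that the holomorphic map
\[
\Phi : \C^n \times \C^r \longrightarrow \C^{n+r}, \qquad (a,b) \longmapsto (\psi_0, \ldots, \psi_{q_1 r - 1}, \gamma_{n-r_1}, \ldots, \gamma_{n-1}, b_0, \ldots, b_{r-1}),
\]
has nonsingular Jacobian at the origin; the inverse function theorem then produces the claimed local coordinate system. Since the last $r$ output functions are the $b_j$'s themselves, the Jacobian has the block form
\[
\delta\Phi|_0 = \begin{pmatrix} \partial(\psi,\gamma)/\partial a|_0 & \partial(\psi,\gamma)/\partial b|_0 \\ 0 & I_r \end{pmatrix},
\]
so $\det\delta\Phi|_0 = \det\bigl(\partial(\psi,\gamma)/\partial a|_0\bigr)$, and the whole question reduces to showing that this $n \times n$ top-left block is invertible.

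For the $\psi$-rows I would use the identification $\psi_{sr+i} = \partial^s d_i/\partial b_0^s$ that was already exploited in the proof of Lemma $3.5$, and apply Equation $(3.5)$ with $s$ replaced by $s+1$ to get
\[
\frac{\partial \psi_{sr+i}}{\partial a_j} \;=\; (-1)^s s!\,\bigl(B^{-(s+1)} L_n^{sr}\bigr)_{d_i,\,a_j},
\]
reading the entry through the descending-index convention of Section $3$. At the origin $B^{-(s+1)}$ collapses to $I_n$, and the lower-shift then yields $\partial\psi_{sr+i}/\partial a_j|_0 = (-1)^s s!\,\delta_{j,\,sr+i}$. Since the composite $sr+i$ sweeps $\{0,\ldots,q_1 r-1\}$ bijectively as $(s,i)$ runs through $\{0,\ldots,q_1-1\}\times\{0,\ldots,r-1\}$, the $\psi$-contribution to $\partial(\psi,\gamma)/\partial a|_0$ is a $q_1 r \times n$ matrix of the shape $\bigl(\mathrm{diag}((-1)^s s!),\; 0_{q_1 r \times r_1}\bigr)$, diagonal in the first $q_1 r$ columns and zero in the remaining $r_1$.

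For the $\gamma$-rows I would specialize the defining expansion $B^{-(q_1 - 1)}\widehat{D} = I_n + \gamma_{n-1} L_n + \cdots + \gamma_1 L_n^{n-1}$ to $b = 0$. Then $B|_{b=0} = I_n$, and $D = B^{-1}A$ forces $d_i|_{b=0} = a_i$, so $\widehat{D}|_{b=0} = I_n + a_{n-1} L_n + \cdots + a_1 L_n^{n-1}$. Comparing coefficients gives $\gamma_k|_{b=0} = a_k$ for every $k = 1, \ldots, n-1$; equivalently $\gamma_k - a_k \in (b_0, \ldots, b_{r-1})$, whence $\partial \gamma_k/\partial a_j|_0 = \delta_{kj}$. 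Restricting to $k = n - r_1, \ldots, n - 1$, that is $k = q_1 r, \ldots, n-1$, the $\gamma$-contribution to $\partial(\psi, \gamma)/\partial a|_0$ is $(0_{r_1 \times q_1 r},\; I_{r_1})$.

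Stacking the two pieces, $\partial(\psi, \gamma)/\partial a|_0$ becomes the block-diagonal matrix $\begin{pmatrix} \mathrm{diag}((-1)^s s!) & 0 \\ 0 & I_{r_1} \end{pmatrix}$, which is manifestly invertible, completing the proof. The main conceptual step is recognizing the clean splitting: the linear parts of $\psi_0, \ldots, \psi_{q_1 r - 1}$ pick out $a_0, \ldots, a_{q_1 r - 1}$ up to nonzero scalars, while $\gamma_{n-r_1}, \ldots, \gamma_{n-1}$ pick out the leftover $a_{q_1 r}, \ldots, a_{n-1}$; once this splitting is visible, the $b$-rows close out the coordinate system automatically. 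The only real bookkeeping obstacle is keeping the descending row/column labeling of the Sylvester-type matrices straight, but once that convention is fixed both computations are short direct consequences of the formulas already derived in Section $3$.
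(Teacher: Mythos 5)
Your proof is correct, but it takes a genuinely different route from the paper. You linearize the full candidate coordinate map at the origin: using $\psi_{sr+i}=\partial^{s}d_i/\partial b_0^{s}$ together with Equation $(3.5)$ you identify the linear parts of $\psi_0,\dots,\psi_{q_1r-1}$ with $a_0,\dots,a_{q_1r-1}$ (up to the nonzero scalars $(-1)^s s!$), and by specializing $B^{-q_1}\widehat{A}$ at $b=0$ you identify the linear parts of $\gamma_{n-r_1},\dots,\gamma_{n-1}$ with $a_{q_1r},\dots,a_{n-1}$; the inverse function theorem then closes the argument. The paper instead argues in two stages: it first shows that $(\gamma_0,\dots,\gamma_{n-1},b_0,\dots,b_{r-1})$ is a coordinate system, using Lemma $3.1(2)$ and the polynomial invertibility of the Toeplitz substitution $A_{n+1}=(B_{n+1}^{-q_1}A_{n+1})B_{n+1}^{q_1}$, and then swaps the low-index $\gamma$'s for the $\psi$'s via the congruence $\gamma_i\equiv 0 \bmod (\psi_0,\dots,\psi_{q_1r-1})$ from Lemma $3.11$ together with the fact that the $\psi$'s are polynomials in the new coordinates. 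Your version is more self-contained and elementary — it does not invoke Lemma $3.11$ at all and makes the complementary splitting of the $a$-directions completely explicit — at the cost of the index bookkeeping you acknowledge; the paper's version is shorter on computation but leans on the ideal-theoretic machinery it has already built. Both are valid proofs of the lemma.
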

 
\begin{proof}

Let $A_{n+1}=\begin{pmatrix}I_{n+1}, 0\end{pmatrix} A_{n+r+1} \begin{pmatrix}I_{n+1}\\0\end{pmatrix}$ and $B_{n+1}=\begin{pmatrix}I_{n+1}, 0\end{pmatrix} B_{n+r+1} \begin{pmatrix}I_{n+1}\\0\end{pmatrix}$. Then we have that $A_{n+1}=I_{n+1}+a_{n-1} L_{n+1}+\cdots+a_0 L_{n+1}^n$, $B_{n+1}=I_{n+1}+b_{r-1} L_{n+1}+\cdots+b_0 L_{n+1}^r$ and $B_{n+1}^{-q_1}A_{n+1}=I_{n+1}+\gamma_{n-1} L_{n+1}+ \cdots+ \gamma_0 L_{n+1}^n$. By Lemma $3.1 (2)$ and the equation $A_{n+1}= (B_{n+1}^{-q_1}A_{n+1}) B_{n+1}^{q_1}$, it is easy to see that $\gamma_0, \cdots, \gamma_{n-1}, b_0, \cdots, b_{r-1}$ form a new local coordinate system around zero on $\C^n \times \C^r$.

By Lemma $3.11$, $\gamma_i \equiv 0 \mod (\psi_0, \cdots, \psi_{q_1r-1})$ for $i=0, \cdots, q_1 r-1$. So to prove Lemma $3.15$, it is sufficient to prove that $\psi_0, \cdots, \psi_{q_1r-1}$ are polynomials in $\gamma_0, \cdots, \gamma_{n-1}, b_0, \cdots, b_{r-1}$. This follows directly from the fact that $\psi_i$s are polynomials in $a_0, \cdots, a_{n-1}, b_0, \cdots, b_{r-1}$ while $a_0, \cdots, a_{n-1}, b_0, \cdots, b_{r-1}$ are polynomials in $\gamma_0, \cdots, \gamma_{n-1}, b_0, \cdots, b_{r-1}$. 
\end{proof}

Denote $h_1(x) g(x)^{q_1}=x^{n+r}+\tau_{n+r-1} x^{n+r-1}+\cdots+\tau_0$. We have

\begin{lem}
$c_i \equiv \tau_i \mod (\psi_0, \cdots, \psi_{q_1r-1})$ for $i=0, \cdots, n+r-1$.
\end{lem}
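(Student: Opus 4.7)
The plan is to translate the matrix identities involving $B_{n+r+1}$ and $A_{n+r+1}$ into polynomial identities in the truncated ring $\C[x]/(x^{n+r+1})$, so that the claim becomes a transparent consequence of Lemma $3.11$. The dictionary I will use is the standard one: a lower Toeplitz matrix $I_{n+r+1}+c_1 L_{n+r+1}+\cdots+c_m L_{n+r+1}^m$ corresponds to the polynomial $1+c_1 x+\cdots+c_m x^m$, and matrix multiplication (commutative on lower Toeplitz matrices by Lemma $3.1$) becomes polynomial multiplication modulo $x^{n+r+1}$. Under this dictionary, $A_{n+r+1}$ corresponds to $\widetilde{f}(x):=x^n f(1/x)=1+a_{n-1}x+\cdots+a_0 x^n$, $B_{n+r+1}$ corresponds to $\widetilde{g}(x)$, and $B_{n+r+1}A_{n+r+1}$ corresponds to $\widetilde{h}(x)=\widetilde{f}(x)\widetilde{g}(x)=1+c_{n+r-1}x+\cdots+c_0 x^{n+r}$, so the $c_i$'s are precisely the coefficients of $\widetilde{h}$ (with appropriate reindexing).

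Next, by the definition $f_2(x)=x^{r_1}+\gamma_{n-1}x^{r_1-1}+\cdots+\gamma_{n-r_1}$, the reversed polynomial is $\widetilde{f_2}(x)=1+\gamma_{n-1}x+\cdots+\gamma_{n-r_1}x^{r_1}$. I would verify that the coefficients of $L_{n+r+1}^1,\ldots,L_{n+r+1}^{r_1}$ in $B_{n+r+1}^{-q_1}A_{n+r+1}$ are exactly $\gamma_{n-1},\ldots,\gamma_{n-r_1}$, since both sets describe the low-degree part of the formal power series $\widetilde{f}(x)/\widetilde{g}(x)^{q_1}$ (the $\gamma_{n-i}$'s arise from the $n\times n$ matrix $B^{-q_1}\widehat{A}$, while the others come from the $(n+r+1)\times(n+r+1)$ matrix, and both are uniquely determined by the same polynomial division truncated appropriately). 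Applying Lemma $3.11$ with $s=q_1$, and noting $n-q_1 r+1=r_1+1$, the coefficients of $L_{n+r+1}^i$ in $B_{n+r+1}^{-q_1}A_{n+r+1}$ for $i=r_1+1,\ldots,n+r$ vanish modulo $(\psi_0,\ldots,\psi_{q_1 r-1})$. Combining these two facts yields the key congruence
$$B_{n+r+1}^{-q_1}A_{n+r+1}\equiv \widetilde{f_2}(L_{n+r+1})\mod (\psi_0,\ldots,\psi_{q_1 r-1}).$$

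Finally, I would multiply both sides by $B_{n+r+1}^{q_1+1}$, using the commutativity of lower Toeplitz matrices. The left-hand side becomes $B_{n+r+1}A_{n+r+1}$, which corresponds to $\widetilde{h}(x)$ and hence encodes the $c_i$'s. The right-hand side corresponds to $\widetilde{f_2}(x)\widetilde{g}(x)^{q_1+1}$, the reversal of $f_2(x)g(x)^{q_1+1}=h_1(x)g(x)^{q_1}$, and hence encodes the $\tau_i$'s. Matching coefficients of $x^{n+r-i}$ for $i=0,\ldots,n+r-1$ then gives $c_i\equiv \tau_i \mod (\psi_0,\ldots,\psi_{q_1 r-1})$, which is the desired conclusion.

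The main obstacle will be the bookkeeping step identifying the low-degree coefficients of the large matrix $B_{n+r+1}^{-q_1}A_{n+r+1}$ with the $\gamma_{n-i}$ appearing in the definition of $f_2$: one must check that the relation $\widehat{A}=B\widehat{D}$, together with $\widehat{D}\equiv \widetilde{f}(x)/\widetilde{g}(x) \mod x^n$, forces the desired agreement up to degree $r_1$. Once that identification is in hand, the rest is immediate algebra carried by Lemmas $3.1$ and $3.11$.
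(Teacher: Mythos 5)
Your argument is essentially identical to the paper's proof: the paper also writes $C_{n+r+1}=A_{n+r+1}B_{n+r+1}=(B_{n+r+1}^{-q_1}A_{n+r+1})B_{n+r+1}^{q_1+1}$, invokes Lemma $3.11$ to replace $B_{n+r+1}^{-q_1}A_{n+r+1}$ by $I_{n+r+1}+\gamma_{n-1}L_{n+r+1}+\cdots+\gamma_{n-r_1}L_{n+r+1}^{r_1}$ modulo $(\psi_0,\ldots,\psi_{q_1r-1})$, and reads off the $\tau_i$ from the resulting product. Your recasting in $\C[x]/(x^{n+r+1})$ and your explicit check that the $\gamma_{n-i}$ from the $n\times n$ matrix agree with the low-order coefficients of the $(n+r+1)\times(n+r+1)$ matrix are just a more careful rendering of steps the paper leaves implicit; the proof is correct.
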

\begin{proof}

Let $C_{n+r+1}=I_{n+r+1}+c_{n+r-1} L_{n+r+1}+\cdots+c_0 L_{n+r+1}^{n+r}$, where $c_i$s are the coefficients of $h(x)$. Then $C_{n+r+1}=A_{n+r+1} B_{n+r+1}$. By Lemma $3.11$, $B_{n+r+1}^{-q_1} A_{n+r+1} \equiv I_{n+r+1}+\gamma_{n-1} L_{n+r+1}+\cdots+\gamma_{n-r_1} L_{n+r+1}^{r_1} \mod (\psi_0, \cdots, \psi_{q_1r-1})$. So $C_{n+r+1}=A_{n+r+1} B_{n+r+1}=(B_{n+r+1}^{-q_1} A_{n+r+1})B_{n+r+1}^{q_1+1} \equiv (I_{n+r+1}+\gamma_{n-1} L_{n+r+1}+\cdots+\gamma_{n-r_1} L_{n+r+1}^{r_1}) B_{n+r+1}^{q_1+1}=I_{n+r+1}+\tau_{n+r-1} L_{n+r+1}+\cdots+ \tau_0 L_{n+r+1}^{n+r} \mod (\psi_0, \cdots, \psi_{q_1r-1})$. Therefore $c_i \equiv \tau_i \mod (\psi_0, \cdots, \psi_{q_1r-1})$ for $i=0, \cdots, n+r-1$.

\end{proof}

Proof of Theorem $3.14$:

\begin{proof} The case $s=1$ is Corollary $3.13$.

Suppose we have proved that 

$\overset{s}{\overbrace{\Delta^{r_1} \cdots \Delta^{r_1}}} \overset{q_1}{\overbrace{\Delta^r \cdots \Delta^r}} I(\mu_{n,r})= I(\mu_{n,r})+(\psi_0, \cdots, \psi_{q_1r-1}, \varphi_0, \cdots, \varphi_{sr_1-1})$ for $s=1, \cdots, p$

If $p=q_2$, we have done the first part of Theorem $3.14$. Otherwise, we may assume $p<q_2$. We will show that 

$\overset{p+1}{\overbrace{\Delta^{r_1} \cdots \Delta^{r_1}}} \overset{q_1}{\overbrace{\Delta^r \cdots \Delta^r}} I(\mu_{n,r})= I(\mu_{n,r})+(\psi_0, \cdots, \psi_{q_1r-1}, \varphi_0, \cdots, \varphi_{(p+1)r_1-1})$

By Lemma $3.16$, $\overset{p}{\overbrace{\Delta^{r_1} \cdots \Delta^{r_1}}} \overset{q_1}{\overbrace{\Delta^r \cdots \Delta^r}} I(\mu_{n,r})=I(\mu_{n,r})+(\psi_0, \cdots, \psi_{q_1r-1}, \varphi_0, \cdots,\varphi_{pr_1-1})$

\noindent $= (c_{n+r-1}, \cdots, c_0, \psi_0, \cdots, \psi_{q_1r-1}, \varphi_0, \cdots, \varphi_{pr_1-1}) = (\tau_{n+r-1}, \cdots, \tau_0, \psi_0, \cdots, \psi_{q_1r-1}, \varphi_0, \cdots, $

\noindent $\varphi_{pr_1-1})$. Considering $\delta \overset{p}{\overbrace{\Delta^{r_1} \cdots \Delta^{r_1}}} \overset{q_1}{\overbrace{\Delta^r \cdots \Delta^r}} I(\mu_{n,r})=\delta (\tau_{n+r-1}, \cdots, \tau_0, \psi_0, \cdots, \psi_{q_1r-1}, \varphi_0, \cdots$

\noindent $,\varphi_{pr_1-1})$ with derivatives taken with respect to the new coordinate system in the order $\gamma_{n-1}, \cdots, \gamma_{n-r_1},\psi_{q_1r-1}, \cdots, \psi_0, b_{r-1}, \cdots, b_0$, we have that it is given by the following matrix

\begin{equation}
\begin{split}
\begin{pmatrix} 
B_{n+r}^{q_1+1} \begin{pmatrix} I_{r_1} \cr 0 \cr \end{pmatrix}& 0 & (q_1+1) B_{n+r}^{q_1}\Gamma_{n+r} \begin{pmatrix} I_r\cr 0 \cr \end{pmatrix}\\
0&I_{q_1r}&0\\
\begin{pmatrix} \frac{\partial \varphi_{sr_1+i}}{\partial \gamma_{n-j}}\end{pmatrix}&0& \begin{pmatrix} \frac{\partial \varphi_{sr_1+i}}{\partial b_j} \end{pmatrix}\\ 
\end{pmatrix}
\end{split}
\end{equation}

where $\Gamma_{n+r}=I_{n+r}+\gamma_{n-1} L_{n+r}+\cdots+\gamma_{n-r_1} L_{n+r}^{r_1}$ and $s$ varies from $0$ to $p-1$.

Left multiplying matrix $(3.24)$ by $\begin{pmatrix} B_{n+r}^{-q_1}& 0 \\ 0& I_{q_1r+pr_1} \\ \end{pmatrix}$, we have

{\footnotesize \begin{equation}
\begin{split}
\begin{pmatrix} 
B_{n+r} \begin{pmatrix} I_{r_1} \cr 0 \cr \end{pmatrix}& 0 & (q_1+1) \Gamma_{n+r} \begin{pmatrix} I_r\cr 0 \cr \end{pmatrix}\\
0&I_{q_1r}&0\\
\begin{pmatrix} \frac{\partial \varphi_{sr_1+i}}{\partial \gamma_{n-j}}\end{pmatrix}&0& \begin{pmatrix} \frac{\partial \varphi_{sr_1+i}}{\partial b_j} \end{pmatrix}\\ 
\end{pmatrix}=\begin{pmatrix} 
\begin{pmatrix} I_{r+r_1}, 0 \end{pmatrix} B_{r+r_1+1} \begin{pmatrix} I_{r_1} \cr 0 \cr \end{pmatrix}& 0 & (q_1+1) \Gamma_{r+r_1} \begin{pmatrix} I_r\cr 0 \cr \end{pmatrix}\\
0&0&0\\
0&I_{q_1r}&0\\
\begin{pmatrix} \frac{\partial \varphi_{sr_1+i}}{\partial \gamma_{n-j}}\end{pmatrix}&0& \begin{pmatrix} \frac{\partial \varphi_{sr_1+i}}{\partial b_j} \end{pmatrix}\\ 
\end{pmatrix}
\end{split}
\end{equation}}

Left multiplying $(3.25)$ by {\footnotesize $\begin{pmatrix} \frac{1}{q_1+1} \Gamma_{r+r_1}^{-1}& 0\\0&I_{2q_1r+pr_1} \end{pmatrix} \begin{pmatrix}I_{r+r_1}&0&0\\0&I_{2q_1r}&0\\\frac{-1}{q_1+1} \begin{pmatrix} \frac{\partial \varphi_{sr_1+i}}{\partial b_j} \end{pmatrix} \begin{pmatrix} I_{r}, 0 \end{pmatrix} \Gamma_{r+r_1}^{-1}&0&I_{pr_1}\\ \end{pmatrix}$} and using Equation $(3.20)$, we have that

{\small
\begin{equation}
\begin{split}
\begin{pmatrix} 
\frac{1}{q_1+1} \Gamma_{r+r_1}^{-1}\begin{pmatrix} I_{r+r_1}, 0 \end{pmatrix} B \begin{pmatrix} I_{r_1} \cr 0 \cr \end{pmatrix}& 0 & \begin{pmatrix} I_r\cr 0 \cr \end{pmatrix}\\
0&0&0\\
0&I_{q_1r}&0\\
(1+\frac{1}{(s+1)(q_1+1)}) \begin{pmatrix} \frac{\partial \varphi_{sr_1+i}}{\partial \gamma_{n-j}}\end{pmatrix}&0& 0  
\end{pmatrix}
\end{split}
\end{equation}
}

By induction and the construction of $\varphi_i$s, the matrix $(3.26)$ is equal to

{\small
\begin{equation}
\begin{split}
\begin{pmatrix} 
\frac{1}{q_1+1} \begin{pmatrix} I_{r}, 0 \end{pmatrix} \Gamma_{r+r_1}^{-1}\begin{pmatrix} I_{r+r_1}, 0 \end{pmatrix} B \begin{pmatrix} I_{r_1} \cr 0 \cr \end{pmatrix}& 0 & I_r \\
0&0&0\\
0&I_{q_1r}&0\\
0&0& 0  
\end{pmatrix} \mod (\varphi_0, \cdots, \varphi_{(p+1)r_1-1})
\end{split}
\end{equation}
} 
So the corank of $\delta \overset{p}{\overbrace{\Delta^{r_1} \cdots \Delta^{r_1}}} \overset{q_1}{\overbrace{\Delta^r \cdots \Delta^r}} I(\mu_{n,r})$ is $r_1$ and any $(n+r-r_1+1)$ minor is generated by $(\varphi_0, \cdots, \varphi_{(p+1)r_1-1})$. Because each $\varphi_i$ is only different from a $(n+r-r_1+1)$ minor of $\delta \overset{p}{\overbrace{\Delta^{r_1} \cdots \Delta^{r_1}}} \overset{q_1}{\overbrace{\Delta^r \cdots \Delta^r}} I(\mu_{n,r})$ by a nonzero constant, so

$\overset{p+1}{\overbrace{\Delta^{r_1} \cdots \Delta^{r_1}}} \overset{q_1}{\overbrace{\Delta^r \cdots \Delta^r}} I(\mu_{n,r})= I(\mu_{n,r})+(\psi_0, \cdots, \psi_{q_1r-1}, \varphi_0, \cdots, \varphi_{(p+1)r_1-1})$

This completes the proof of the first part of Theorem $3.14$.

Using the same idea, we can prove that $\delta \overset{q_2}{\overbrace{\Delta^{r_1} \cdots \Delta^{r_1}}} \overset{q_1}{\overbrace{\Delta^r \cdots \Delta^r}} I(\mu_{n,r})$ is equivalent to

\begin{equation}
\begin{split}
\begin{pmatrix} 
\frac{1}{q_1+1} \begin{pmatrix} I_{r}, 0 \end{pmatrix} \Gamma_{r+r_1}^{-1}\begin{pmatrix} I_{r+r_1}, 0 \end{pmatrix} B \begin{pmatrix} I_{r_1} \cr 0 \cr \end{pmatrix}& 0 & I_r \\
0&0&0\\
0&I_{q_1r}&0\\
0&0& 0\\
(1+\frac{1}{q_2(q_1+1)}) \begin{pmatrix} \frac{\partial \varphi_{(q_2-1)r_1+i}}{\partial \gamma_{n-j}}\end{pmatrix}&0&0\\  
\end{pmatrix} \mod (\varphi_0, \cdots, \varphi_{q_2r_1-1})
\end{split}
\end{equation}

By comparison with $\delta \overset{q_2}{\overbrace{\Delta^{r_1} \cdots \Delta^{r_1}}} I(\mu_{r,r_1})$ which is equivalent to

\begin{equation}
\begin{split}
\begin{pmatrix} 
\frac{1}{q_1+1} \begin{pmatrix} I_{r}, 0 \end{pmatrix} \Gamma_{r+r_1}^{-1}\begin{pmatrix} I_{r+r_1}, 0 \end{pmatrix} B \begin{pmatrix} I_{r_1} \cr 0 \cr \end{pmatrix}& I_r \\
0&0\\
(1+\frac{1}{q_2}) \begin{pmatrix} \frac{\partial \varphi_{(q_2-1)r_1+i}}{\partial \gamma_{n-j}}\end{pmatrix}&0\\  
\end{pmatrix} \mod (\varphi_0, \cdots, \varphi_{q_2r_1-1})
\end{split}
\end{equation}
the  matrices $(3.28)$ and $(3.29)$ have the same corank. By induction, the latter one has corank $r_2$, so does the matrix $(3.28)$. All the $(n+r-r_2+1)$ minors of matrix $(3.28)$ and the $(r+r_1-r_2+1)$ minors of matrix $(3.29)$ generate the same idea, so
 
 $\Delta^{r_2}(\overset{q_2}{\overbrace{\Delta^{r_1} \cdots \Delta^{r_1}}} \overset{q_1}{\overbrace{\Delta^r \cdots \Delta^r}} I(\mu_{n,r})= I(\mu_{n,r})+(\psi_0, \cdots, \psi_{q_1r-1},\varphi_0, \cdots, \varphi_{q_2r_1-1}, \varphi_{q_2r_1},$

$ \cdots, \varphi_{q_2r_1+r_2-1})$.
\end{proof}  

As a consequence of Theorem $3.14$, we have that
\begin{cor}
The first $(q_1+q_2+1)$ entries in $TB(I(\mu_{n,r}))$ is $(r, \cdots, r, r_1, \cdots, r_1, r_2)$ with $r$ repeating $q_1$ times and $r_1$ repeating $q_2$ times.
\end{cor}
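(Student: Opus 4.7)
The plan is to read off the first $q_1+q_2+1$ entries of $TB(I(\mu_{n,r}))$ one by one by tracking coranks through the critical extensions already computed in Theorem 3.14, using the definition $i_k = \corank(\Delta^{i_{k-1}} \cdots \Delta^{i_1} J)$. Conceptually, nothing new needs to be proved; the work is to identify which corank computation inside the proof of Theorem 3.14 gives each entry.

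First, by Corollary 3.7 the first $q_1$ entries are all equal to $r$, so I can start from the ideal $J_{q_1}:=\overset{q_1}{\overbrace{\Delta^r \cdots \Delta^r}} I(\mu_{n,r}) = I(\mu_{n,r})+(\psi_0,\ldots,\psi_{q_1 r-1})$. Proposition 3.8 gives $\corank(J_{q_1})=r_1$, which means $i_{q_1+1}=r_1$ and the next critical extension is $\Delta^{r_1}$. Next, for $s=2,\ldots,q_2$, I would consider the intermediate ideal $J_{q_1+s-1} := \overset{s-1}{\overbrace{\Delta^{r_1} \cdots \Delta^{r_1}}} J_{q_1}$; by the first part of Theorem 3.14 its generator set is $(\psi_0,\ldots,\psi_{q_1 r-1},\varphi_0,\ldots,\varphi_{(s-1)r_1-1})$, and by the corank computation embedded in the proof of that theorem (the reduction of the Jacobian to the form of matrix (3.27)), the Jacobian of this ideal has corank exactly $r_1$ at the origin. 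Hence $i_{q_1+s}=r_1$ for each $s=1,\ldots,q_2$, filling in the $q_2$ copies of $r_1$.

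Finally, for the $(q_1+q_2+1)$-th entry I need $\corank(J_{q_1+q_2})$ where $J_{q_1+q_2}=\overset{q_2}{\overbrace{\Delta^{r_1} \cdots \Delta^{r_1}}} J_{q_1}$. This is exactly the corank of the Jacobian in the reduced form (3.28) appearing in the proof of Theorem 3.14. The key observation there is that (3.28) has the same corank as (3.29), which is $\delta \overset{q_2}{\overbrace{\Delta^{r_1} \cdots \Delta^{r_1}}} I(\mu_{r,r_1})$ up to $\varphi_0,\ldots,\varphi_{q_2r_1-1}$; by induction on the pair $(r,r_1)$ (whose Euclidean algorithm is one step shorter than that of $(n,r)$), the latter has corank $r_2$. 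Therefore $i_{q_1+q_2+1}=r_2$.

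Putting the three pieces together gives the claimed prefix $(r,\ldots,r,r_1,\ldots,r_1,r_2)$ with the stated multiplicities. There is no real obstacle here beyond careful bookkeeping: the substantive content — the explicit generators $\psi_i,\varphi_i$, the row-reduction of the Jacobians, and the inductive comparison with the $(r,r_1)$-problem — is already carried out in Theorem 3.14 and Proposition 3.8, so the corollary is obtained simply by translating those corank statements into TB-symbol entries via the definition of the Thom--Boardman symbol.
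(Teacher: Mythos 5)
Your proposal is correct and follows exactly the route the paper intends: the paper states this corollary without a separate proof, as an immediate consequence of Corollary 3.7, Proposition 3.8, and the corank computations embedded in the proof of Theorem 3.14 (the reductions to matrices (3.27)--(3.29)). Your write-up simply makes explicit the translation of those corank statements into the successive entries of the Thom--Boardman symbol, which is precisely what ``as a consequence of Theorem 3.14'' means here.
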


Replacing $f_0(x), f_1(x)$ with $f_1(x), f_2(x)$ and repeating the same process, we can produce $f_3(x)$ of degree $r_2$ and a map $\mu_{r_1,r_2}: \C^{r_1} \times \C^{r_2} \rightarrow \C^{r_1+r_2}$. Using the first $(q_3+1)$ critical extensions of $I(\mu_{r_1,r_2})$, we can generate polynomials $\varphi_{q_2r_1},\cdots, \varphi_{q_2r_1+q_3 r_2-1}, \varphi_{q_2r_1+q_3 r_2}, \cdots $

\noindent $\varphi_{q_2r_1+q_3 r_2+r_3-1}$ such that they can be added correspondingly into the generator set to form the next $(q_3+1)$ critical extensions of $I(\mu_{n,r})$. Repeating the same procedure over and over, we can produce $f_4(x), \cdots, f_{k+1}(x)$ and use them to prove $TB(I(\mu_{n,r}))=I(n,r)$. Due to the heavy notations, we will not do so here. Instead, we mention a key observation that explains why we can add polynomials at each step to obtain the corresponding critical extension of $I(\mu_{n,r})$. At the $(q_1+\cdots+q_p+s)$-th step of the critical extension of $I(\mu_{n,r})$ for some $1 \le s \le q_{p+1}$, the Jacobian matrix is equivalent to a matrix with the form

{\small
\begin{equation}
\begin{split}
\begin{pmatrix} 
I_{r_{p-1}}& 0 & \mu \Lambda \\
0&0&0\\
0&I_{q_1r+q_2r_1+\cdots+q_p r_{p-1}}&0\\
0&0& 0\\
0&0&\nu \Theta \\  
\end{pmatrix} \mod (\varphi_{q_2+ \cdots +q_p}, \cdots, \varphi_{q_2+\cdots+q_p+s r_p-1})
\end{split}
\end{equation}
}
\noindent for some nonzero constant $\mu$ and $\nu$.

The matrix $\delta \overset{s}{\overbrace{\Delta^{r_{p-1}} \cdots \Delta^{r_{p-1}}}} I(\mu_{r_{p-1},r_p})$ is equivalent to

\begin{equation}
\begin{split}
\begin{pmatrix} 
I_{r_{p-1}}&  \Lambda\\
0&0\\
 0&\nu' \Theta\\  
\end{pmatrix} \mod (\varphi_{q_2+ \cdots +q_p}, \cdots, \varphi_{q_2+\cdots+q_p+s r_p-1})
\end{split}
\end{equation}

for some nonzero constant $\nu'$. 

It is obvious that the matrices $(3.30)$ and $(3.31)$ have the same corank and the corresponding minors generate the same ideal. So we can add the polynomials generated by the first $(q_{p+1}+1)$ critical extensions of $I(\mu_{r_{p-1},r_p})$ into the corresponding generator sets to form the critical extensions of $I(\mu_{n,r})$. Hence Theorem $1.2$ is true. This completes the proof of Varley's Conjecture.

\bigskip
\begin{center}  
{\bf References}
\end{center}
\bigskip

\begin{enumerate}
\item M. Adams, C. McCrory, T. Shifrin, R. Varley, Invariants of Gauss Maps of Theta Divisors, Proc. Sympos. Pure Math., Vol 54, pp 1-8, Amer. Math. Soc., Providence, RI, 1993.

\item V.I. Arnol$'$d, A.N. Varchenko, S.M. Gusein-Zade, Singularities of Differentiable Maps, Volume I, Birh\"auser, Boston, 1985.

\item J.M. Boardman, Singularities of Differential Maps, Institut des Hautes \'Etudes Scientifiques, Publications Mathematiques, vol 33, pp 21-57, 1967.

\item J. Wethington, On computing the Thom-Boardman symbols for polynomial multiplication maps, Ph.D. Dissertation at UGA, 2002.
\end{enumerate}
\medskip
{\small

Jiayuan Lin

\medskip
Affiliation: SUNY Canton

Mailing address:

Department of Mathematics

SUNY Canton

34 Cornell Drive

Canton, NY 13617

linj@canton.edu

\medskip

Janice Wethington

\medskip
Affiliation: U.S. Department of Defense

Mailing address:

1509 Stevens Creek Drive

North Augusta, SC 29860

janice1729@yahoo.com}

\end{document}